\titleformat{\section}{\large\bfseries\filcenter}{\thesection}{1em}{}
\titleformat{\subsection}{\bfseries}{\thesubsection}{1em}{}
\newtheorem{theorem}{Theorem}[section]
\newtheorem{lemma}[theorem]{Lemma}
\newtheorem{proposition}[theorem]{Proposition}
\theoremstyle{remark}
\theoremstyle{definition}
\newtheorem{remark}[theorem]{Remark}
\newtheorem{definition}[theorem]{Definition}
\newtheorem{example}[theorem]{Example}
\newtheorem{notation}[theorem]{Notation}
\numberwithin{equation}{section}
\renewcommand\thanks[1]{%
  \begingroup
  \renewcommand\thefootnote{}\footnote{#1}%
  \addtocounter{footnote}{-1}%
  \endgroup
}
\renewcommand{\tilde}{\widetilde}
\renewcommand{\epsilon}{{\varepsilon}}
\newcommand{\ie}{\textit{i.e. }}
\newcommand{\cf}{\textit{cf. }}
\newcommand{\KK}{{\mathbb{K}}}
\newcommand{\RR}{{\mathbb{R}}}
\newcommand{\CC}{{\mathbb{C}}}
\newcommand{\n}{{\mathtt{n}}}
\renewcommand{\i}{\imath}
\newcommand{\Dir}{{\mathsf{D}}}
\newcommand{\G}{\mathsf{G}}
\newcommand{\Id}{{\operatorname{Id}}}
\newcommand{\tr}{\mathrm{tr}}
\newcommand{\E}{\mathsf{E}}
\renewcommand{\L}{\mathsf{L}}
\newcommand{\M}{\mathsf{M}}
\renewcommand{\H}{\mathsf{H}}
\renewcommand{\P}{\mathsf{P}}
\renewcommand{\S}{\mathbb{S}}
\newcommand{\T}{\mathsf{T}}
\newcommand{\U}{\mathsf{U}}
\newcommand{\V}{\mathsf{V}}
\newcommand{\bM}{{\partial\M}}
\newcommand{\bSigma}{{\partial\Sigma}}
\newcommand{\oS}{\mathsf{S}}
\newcommand{\oK}{{\mathsf{K}_\lambda}}
\newcommand{\fS}{\mathfrak{S}}
\newcommand{\oB}{\mathsf{B}}
\newcommand{\f}{{\mathfrak{f}}}
\newcommand{\h}{{\mathfrak{h}}}
\newcommand{\vol}{{\textnormal{vol}\,}}
\newcommand{\supp}{{\textnormal{supp\,}}}
\newcommand{\Spin}{\textnormal{Spin}}
\newcommand{\bracket}[2]{\langle  #1\,|\, #2  \rangle}
\newcommand{\fiber}[2]{\prec  #1\,|\, #2  \succ}
\newcommand{\scalar}[2]{(#1\, |\, #2)}
\newcommand{\black}{\color{black}}
\begin{document}

\begin{flushright}

\baselineskip=4pt

\end{flushright}

\begin{center}
\vspace{5mm}

{\Large\bf ON THE CAUCHY PROBLEM FOR FRIEDRICHS SYSTEMS \\[3MM] ON GLOBALLY HYPERBOLIC MANIFOLDS\\[4MM] WITH TIMELIKE BOUNDARY}

\vspace{5mm}

{\bf by}

\vspace{5mm}
{  \bf  Nicolas Ginoux}\\[1mm]
\noindent  {\it Universit\'e de Lorraine, CNRS, IECL, F-57000 Metz, France}\\[1mm]
email: \ {\tt  nicolas.ginoux@univ-lorraine.fr}\\

\vspace{5mm}

{  \bf  Simone Murro}\\[1mm]
\noindent  {\it Dipartimento di Matematica, Universit\`a di Genova, Italy }\\[1mm]
email: \ {\tt  murro@dima.unige.it}
\\[10mm]
\end{center}

\begin{abstract}
In this paper, the Cauchy problem for a Friedrichs system on a globally hyperbolic manifold with a timelike boundary is investigated. By imposing admissible boundary conditions, the existence and the uniqueness of strong solutions are shown. Furthermore, if the Friedrichs system is hyperbolic, the Cauchy problem is proved to be well-posed in the sense of Hadamard. Finally, examples of Friedrichs systems with admissible boundary conditions are provided.
\end{abstract}

\paragraph*{Keywords:} symmetric hyperbolic systems,  symmetric positive 
systems, admissible boundary conditions, Dirac operator, normally hyperbolic 
operator, Klein-Gordon operator, heat operator, reaction-diffusion operator, globally hyperbolic 
manifolds with timelike boundary.
\paragraph*{MSC 2010: } Primary 53C50, 58J45;  Secondary 35L50, 53C27. 
\\[0.5mm]

\tableofcontents

\renewcommand{\thefootnote}{\arabic{footnote}}
\setcounter{footnote}{0}

\section{Introduction}

The Cauchy problem for hyperbolic partial differential equations on curved 
spacetimes has been and continues to be at the forefront of scientific 
research. 
While for a generic spacetime the well-posedness of the Cauchy problem cannot be 
expected, in the class of \textit{globally hyperbolic manifolds} (with empty 
boundary) it has been proved that any hyperbolic PDE admits a unique smooth 
solution which depends continuously on the smooth Cauchy data, see the founding 
article \cite{Leray53} as well as \cite{Ba-lect,wave}.
Even though globally hyperbolic spacetimes have plenty of applications to physics, there
exist also important and interesting situations which require the spacetimes to have a non-trivial boundary. For example, experimental setups for
studying the Casimir effect confine quantum field theories between several 
metal plates, which may be modeled theoretically by introducing timelike 
boundaries to the system.
From a PDE viewpoint, this suggests that the
Cauchy problem could be well-posed once suitable boundary conditions are 
introduced.
In the last two decades, the well-posedness of the mixed 
initial-boundary problem for hyperbolic operators has been investigated in 
different geometric settings: see e.g. 
\cite{BenzoniGavageSerre07,SarbachTiglio12} for general surveys, 
\cite{Bachelot,Dappia1,Dappia4,Niky,GannotWrochna,
Friedrich95,Vasy,Warnick13} for asymptotically 
anti-de Sitter spacetimes, \cite{Dappia2,Dappia3,Felix} for the Klein-Gordon, 
the 
Maxwell and the Dirac operator on stationary spacetimes and \cite{DiracMIT,DiracAPS} for 
the case of Dirac operator on globally hyperbolic spacetimes. 

The aim of this paper is to prove well-posedness of the Cauchy problem, not only 
for hyperbolic PDE on globally hyperbolic manifolds with timelike boundary (\cf 
Definition~\ref{def:glob hyper manifold timelike boundary}), but for a larger 
class, known as \textit{Friedrichs systems} (\cf Definition~\ref{def:Fsyst}).
Friedrichs systems were developed by K.O.~Friedrichs 
in~\cite{Friedrichs0,Friedrichs} and include a large variety of PDE.
The 
classical Dirac operator is an example and many second-order PDEs (like wave 
equations and the heat equation) can be reduced to a Friedrichs system.
Our first main result is the existence of strong solutions for the forward 
Cauchy problem for Friedrichs system coupled with future admissible boundary 
conditions (\cf Definition~\ref{def:admissible bc}).
 \begin{theorem}[Strong solutions]\label{thm:main1}
 Let $\M$ be a globally hyperbolic 
manifold with timelike boundary and let 
 $t\colon\M\to\RR$ be a Cauchy temporal function.
 For any $0<T\in\RR$ denote 
with $\M_\T:=t^{-1}((0,T))$ a time strip.
Let  $\oS$ be a Friedrichs 
system with constant characteristic and 
denote with $G_\oB$ a {future} admissible 
boundary condition.
Then, there exists a strong solution of the Cauchy 
problem
\begin{equation}\label{CauchyM_T0}
\begin{cases}{}
{\oS }\Psi=\f \in \Gamma_c(\E_{|_{\M_\T}}) \\
\Psi_{|_{\Sigma_0}} = \h \in \Gamma_c(\E_{|_{\Sigma_0}})  \\
\Psi_{|_\bM}\in\oB:=\ker(G_\oB)\,.
\end{cases} 
\end{equation} 
Furthermore, if the 
bilinear form $\fiber{\sigma_\oS (dt) \cdot}{\cdot}_p$ is positive definite on
$\E_p$ for every $p\in\Sigma_\T$, then the solution is unique.
\black 
 \end{theorem}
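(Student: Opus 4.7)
The plan is to follow the classical Friedrichs strategy, adapted to the Lorentzian geometry of $\M_\T$: derive an a priori energy estimate for smooth sections satisfying the boundary condition, produce a weak solution by a duality/Hahn--Banach argument, and then upgrade weak to strong via a boundary-adapted mollification. A geometric feature to keep in mind throughout is that $\partial\M_\T$ decomposes into the spacelike Cauchy slice $\Sigma_{t_0}$ and the timelike piece $\bM\cap\M_\T$, and these give rise to two structurally different boundary terms in the energy identity.

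For the energy estimate, given a smooth compactly supported $\Psi$ with $\Psi_{|\bM}\in\oB$, I would apply Green's formula to $\bracket{\oS\Psi}{\Psi}+\bracket{\Psi}{\oS\Psi}$ integrated over $t^{-1}([t_0,t])$. The symmetry of the principal coefficients of $\oS$ collapses the bulk into a zeroth-order term controlled by $\|\Psi\|^2_{L^2}$; the slice $\Sigma_t$ contributes a term equivalent to $\|\Psi(t)\|^2_{L^2(\Sigma_t)}$ since $dt$ is timelike; and the contribution from $\bM$, of the form $\int\bracket{\sigma_{\oS}(\nu)\Psi}{\Psi}$, has a definite sign thanks to admissibility of $\oB$ (Definition~\ref{def:admissible bc}). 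Grönwall's inequality then yields
$$\|\Psi(t)\|^2_{L^2(\Sigma_t)}\leq C\Big(\|\Psi(t_0)\|^2_{L^2(\Sigma_{t_0})}+\|\oS\Psi\|^2_{L^2(t^{-1}[t_0,t])}\Big),$$
which immediately gives uniqueness, and after symmetrization (taking $\sup_t$) gives the energy estimate also for the backward-in-time problem.

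For existence of a weak solution, I would pass to the formal adjoint $\oS^*$, which is again a Friedrichs system of the same type, with admissible adjoint boundary condition $\oB^*$. Running the same energy estimate backward in time for $(\oS^*,\oB^*)$ with final data at $t_b$, the linear functional
$$L\big(\oS^{*}\Phi\big)\,:=\,\int_{\M_\T}\bracket{\f}{\Phi}+\int_{\Sigma_{t_0}}\bracket{\h}{\Phi_{|\Sigma_{t_0}}},$$
defined on the image of test sections satisfying $\oB^*$ and vanishing at $t_b$, is bounded in the $L^2$-norm. Hahn--Banach extension then produces $\Psi\in L^2(\M_\T;\E)$ which is a weak solution of~\eqref{CauchyM_T0} with $\Psi_{|\Sigma_{t_0}}=\h$ and $\Psi_{|\bM}\in\oB$ in the distributional sense.

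To pass from weak to strong I would construct an approximating sequence $\Psi_n\in\Gamma_c(\E_{|\M_\T})$ with $\Psi_{n\,|\bM}\in\oB$ such that $\Psi_n\to\Psi$ and $\oS\Psi_n\to\f$ in $L^2$, by a Friedrichs-mollifier argument in a collar neighborhood of $\bM$: regularization is performed only along directions tangent to $\bM$, so that membership in $\oB$ is preserved, and the commutator $[\oS,J_\epsilon]$ is then shown to converge to zero in $L^2$ via the classical Friedrichs lemma. I expect this last step to be the main obstacle. This is precisely where the \emph{constant characteristic} hypothesis does the heavy lifting, since it guarantees that the kernel subbundle underlying $\oB$ and the characteristic directions of $\oS$ form smooth bundles along $\bM$, so that the boundary-tangential mollifier can be defined globally and behaves uniformly up to the boundary.
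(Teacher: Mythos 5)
Your three-step architecture (a priori energy estimate, weak existence by duality, weak-to-strong by boundary-adapted mollification) is the same as the paper's, but there is a genuine gap in scope at the very first step. The energy estimate you derive is the one for \emph{symmetric hyperbolic} systems only: the claim that the slice $\Sigma_t$ contributes a term equivalent to $\|\Psi(t)\|^2_{L^2(\Sigma_t)}$ ``since $dt$ is timelike'' uses that $\fiber{\sigma_\oS(dt)\cdot}{\cdot}$ is positive definite, i.e.\ condition (H). Theorem~\ref{thm:main1} is stated for an arbitrary Friedrichs system of constant characteristic, which by definition also covers \emph{symmetric positive} systems that are not hyperbolic --- e.g.\ the first-order reduction of the diffusion-reaction operator, where $\sigma_\oS(dt)$ is the degenerate matrix $\mathrm{diag}(1,0)$ --- and for those your Gr\"onwall argument collapses because the slice term does not control $\|\Psi(t)\|_{L^2(\Sigma_t)}$. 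The paper treats that case by a separate and simpler estimate: integrate the Green identity over the whole strip $\M_\T$, use condition (P) to bound $\scalar{\Phi}{(\oS+\oS^\dagger)\Phi}_{\M_\T}$ from below by $c\,\scalar{\Phi}{\Phi}_{\M_\T}$, discard the boundary term (nonpositive on $\oB^\dagger$), and finish with Cauchy--Schwarz. You need both branches. In the hyperbolic branch the paper also first reduces to Cauchy-compact slices via Proposition~\ref{Cauchycompactornotisthesameforsymmhyp}, a point your sketch does not address.

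On the weak-to-strong step, the paper localizes with a partition of unity and then quotes Rauch's theorem near $\bM$ and Friedrichs' classical weak=strong result in the interior, whereas you propose to prove the boundary case directly. As written, your mechanism is incomplete: a mollifier acting only in directions tangent to $\bM$ preserves $\oB$ and has a good commutator with the tangential part of $\oS$, but $\oS$ also contains the normal derivative term $\sigma_\oS(\n^\flat)\partial_{\overline{z}}$, over which tangential regularization gives no control, so $\oS\Psi_n\to\f$ does not follow from the Friedrichs lemma alone. The operative role of the constant-characteristic hypothesis is not merely that the kernel of $\sigma_\oS(\n^\flat)$ is a smooth bundle (as you state), but that $\E_{|_\bM}$ splits with constant rank into $\ker\sigma_\oS(\n^\flat)$ and a complement on which $\sigma_\oS(\n^\flat)$ is invertible: on the complement one recovers the normal derivative from the equation in terms of $\f$ and tangential derivatives, while the kernel components enter $\oS$ only tangentially and so need no normal control. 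Either carry out this additional step explicitly --- it is the substance of Rauch's theorem --- or cite it, as the paper does.
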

 
 While full regularity of the strong solution cannot be expected for a generic 
Friedrichs system even for smooth Cauchy data (see Section~\ref{sec:regularity} 
for more details), our second main result shows that the backward and the forward Cauchy problem for 
symmetric hyperbolic systems coupled with admissible boundary condition is well-posed.  
 
\begin{theorem}[Smooth solutions]\label{thm:main2}
Let $\M$ be a globally hyperbolic 
manifold with timelike boundary and let $\oS$ be a
 symmetric  hyperbolic system of constant characteristic.
Assume $\oB=(\oB_+,\oB_-)$ to be an admissible boundary condition 
for $\oS$.
Let $\Sigma_0$ be any smooth spacelike Cauchy hypersurface of $\M$.
Then, for all $\f\in\Gamma_c(\E)$ and 
$\h\in\Gamma_c(\E_{|_{\Sigma_0}})$ 
satisfying the compatibility conditions~\eqref{eq:comp cond data} and 
\eqref{eq:comp cond 
databis} up to 
any 
order,  there exists a unique $\Psi\in\Gamma(\E)$ satisfying the Cauchy problem
\begin{equation}\label{Cauchy}
\begin{cases}{}
{\oS }\Psi=\f  \\
\Psi_{|_{\Sigma_{t_0}}} = \h   \\
\Psi_{|_{\bM\cap J^+(\Sigma_0)}}\in\oB_+\\
\Psi_{|_{\bM\cap J^-(\Sigma_0)}}\in\oB_-
\end{cases} 
\end{equation}
 and the map $(\f,\h)\mapsto\Psi$ 
sending a pair $(\f,\h)\in\Gamma_c(\E)\times\Gamma_c(\E_{|_{\bM}})$ to the 
solution $\Psi\in\Gamma(\E)$ of \eqref{Cauchy}, is continuous. 
 \end{theorem}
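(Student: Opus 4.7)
The starting point is Theorem~\ref{thm:main1}, which already supplies a unique strong solution $\Psi$ of \eqref{Cauchy} for the prescribed data $(\f,\h)$. The task is therefore to upgrade this strong solution to a smooth section and to verify that the solution map is continuous. The extra hypothesis that $\fiber{\sigma_\oS(dt)\cdot}{\cdot}_p$ is positive definite means precisely that $\oS$ is symmetric hyperbolic with respect to the Cauchy temporal function $t$, so that energy estimates on the level sets $\Sigma_t$ are available. My plan is to establish $H^s$ regularity of $\Psi$ on each time strip for arbitrary $s$, then conclude by Sobolev embedding and standard functional-analytic arguments.

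For the interior regularity and tangential regularity up to $\bM$, I would use a commutator/induction scheme. Writing $\oS=\sigma_\oS(dt)\partial_t+\mathsf{A}$ with $\mathsf{A}$ tangential to the slices, the positive definiteness of $\sigma_\oS(dt)$ lets me solve for $\partial_t\Psi$ and run the standard Friedrichs--Lax energy identity on $\M_\T$: multiplying $\oS\Psi=\f$ against $\Psi$ in the $\fiber{\cdot}{\cdot}$ pairing and integrating, the boundary flux on $\bM$ is nonnegative by the admissibility assumption on $\oB=\ker(G_\oB)$ (this is exactly the role of admissibility already exploited in the proof of Theorem~\ref{thm:main1}), while the flux on $\Sigma_t$ is coercive. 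A Gr\"onwall argument then gives $\|\Psi(t)\|_{L^2(\Sigma_t)}\lesssim\|\h\|_{L^2(\Sigma_{t_0})}+\|\f\|_{L^1_tL^2_x}$. To pass to higher order, I commute the system with vector fields tangent to $\bM$ (including $\partial_t$ itself): since $\oS$ stays a Friedrichs system with admissible boundary condition after such commutation (the latter because the commutator with a tangential field preserves $\ker(G_\oB)$ along $\bM$), the same energy identity yields bounds on all tangential derivatives.

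Normal regularity at the boundary then comes essentially for free: because the symbol of $\oS$ in the timelike direction is invertible, and because $\bM$ is timelike so that the conormal to $\bM$ can be combined with $dt$ into a noncharacteristic frame, I can solve the equation $\oS\Psi=\f$ algebraically for the derivative of $\Psi$ normal to $\bM$ in terms of tangential derivatives and $\f$. Iterating this identity converts tangential $H^s$ control into full $H^s$ control in a neighbourhood of $\bM$. Combined with the interior estimates on $\M\setminus\bM$ (which are the standard ones for symmetric hyperbolic systems on globally hyperbolic manifolds, see e.g.~\cite{Ba-lect}), this yields $\Psi\in H^s_{\mathrm{loc}}(\E_{|_{\M_\T}})$ for every $s$, hence $\Psi\in\Gamma(\E)$. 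Continuous dependence is then read off from the linear $H^s$ estimates: for each $s$ the solution map is bounded, and these bounds give continuity of $(\f,\h)\mapsto\Psi$ in the topology of $\Gamma(\E)$.

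The main obstacle, and the reason why the compatibility condition \eqref{eq:comp cond data} up to every order is assumed, is the regularity at the corner $\bM\cap\Sigma_{t_0}$. For the induction on $s$ to close one needs, for each $k$, that $\partial_t^k\Psi|_{t=t_0}$ (computed formally from the equation and the initial datum) already lies in $\ker(G_\oB)$ along $\bM\cap\Sigma_{t_0}$; this is exactly what the compatibility conditions encode. I would verify this corner matching carefully, showing that under compatibility to all orders every time-differentiated problem falls again into the hypotheses of Theorem~\ref{thm:main1}, so the inductive energy estimates are legitimate and no loss of derivatives occurs at the corner.
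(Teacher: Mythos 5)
Your overall architecture (tangential energy estimates via commutation, compatibility conditions at the corner, Sobolev embedding, linearity for continuity) is the right skeleton, and it is essentially what underlies the result the paper actually invokes. But there is a genuine gap at the step where you claim that ``normal regularity at the boundary then comes essentially for free'' by solving $\oS\Psi=\f$ algebraically for the derivative of $\Psi$ normal to $\bM$. That algebraic inversion requires $\sigma_{\oS}(\n^\flat)$ to be invertible, i.e.\ the boundary to be \emph{noncharacteristic}. The theorem, however, only assumes $\oS$ to be of \emph{constant characteristic}, so $\ker(\sigma_{\oS}(\n^\flat))$ may be nontrivial --- and it is nontrivial in the paper's main examples (for the first-order reduction of a normally hyperbolic operator the kernel has rank $nk$). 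Since $\partial_t$ is tangent to $\bM$ (the Cauchy temporal function has gradient tangent to the boundary), combining the conormal with $dt$ does not help: writing the principal part along $\bM$ as $\sigma_{\oS}(dt)\nabla_t+\sigma_{\oS}(\n^\flat)\nabla_{\n}+(\text{tangential})$, the equation only determines $\sigma_{\oS}(\n^\flat)\nabla_{\n}\Psi$, hence $\nabla_{\n}\Psi$ modulo $\ker(\sigma_{\oS}(\n^\flat))$. The normal derivatives of the kernel components are exactly the ones your scheme never controls, and this is not a technicality: the paper itself points out (citing Tsuji) that full finite-order regularity genuinely fails in general for characteristic boundaries.

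The paper circumvents this by quoting Secchi's theory of the initial-boundary value problem with characteristic boundary of constant multiplicity: one works in the anisotropic Sobolev spaces $H_*^m$, in which a normal derivative ``costs'' two tangential ones ($h+2k\leq m$), and uses the embedding $H_*^m\subset H^{[\frac{m}{2}]}$. Smooth data compatible to all orders give $\Psi\in H_*^m$ for every $m$, and only then does Sobolev embedding yield smoothness --- the loss of half the derivatives in the normal direction is harmless precisely because one has infinitely many tangential derivatives to spend. (The paper's proof is otherwise close to your plan in outline: weak/strong solution on each time strip from Theorems~\ref{thm:weak} and~\ref{thm:main1}, smoothness from Theorem~\ref{thm:tangential regularity} via Secchi, gluing of time strips by uniqueness, and continuous dependence from the energy estimates of B\"ar with the boundary term playing no role.) To repair your argument you would either have to restrict to nowhere characteristic systems, where your normal-derivative recovery is legitimate and the classical Rauch--Massey result applies, or replace that step by the $H_*^m$ machinery.
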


Roughly speaking, condition~\eqref{eq:comp cond data} up to some finite order 
$k$ ensures that, when the support of initial data meets the boundary of 
$\Sigma_0$, the solution of the Cauchy problem is $C^k$.
\medskip

Showing the well-posedness of the Cauchy problem is not the end of the story: 
Indeed an explicit construction of the evolution operator (as 
in~\cite{cap,Matte1,Matte2,Matte3,CapSimo} for the case of empty boundary) and a 
propagation of the singularity theorem should to be investigated. Clearly,  the 
well-posedness of the Cauchy problem will guarantee the existence 
of Green operators (\cf Proposition~\ref{prop:Green}) which play a pivotal role 
in the algebraic approach to 
linear quantum field theory, see e.g.~\cite{gerard, aqft2} for textbooks, 
\cite{CQF1,CQF2,FK,BD, ThomasAlex} for recent reviews, 
\cite{aqftCAT,aqftCAT2,aqftCAT3,aqftCAT4} for homotopical approaches 
and~\cite{Nic,Buc,cap,simo4,DHP,DMP3, CSth,cospi,simo1,simo2,simo3,SDH12} for 
some applications.
Indeed, they fully 
characterize the space of solutions of a symmetric hyperbolic system 
\cite{Ba,DHP}, they implement the canonical commutation/anticommutation 
relations typical of any linear quantum field theory \cite{CQF1,BD}, and their 
difference, dubbed the casual propagator or Pauli-Jordan commutator, can be 
used to construct quantum states, see e.g. 
\cite{simogravity,simo4,simo3,FK,simo1,simo2}. 

Our strategy to prove the well-posedness of the Cauchy problem is as follows:
first we derive a suitable energy inequality for a Friedrichs system in 
Section~\ref{sec:Energy Ineq} which will be employed in Section~\ref{sec:weak} 
to show existence and uniqueness of weak solutions.
In Section~\ref{sec:strong 
sol} we shall prove that any weak solution is actually a strong solution.
This 
will be achieved by localizing the problem and then using the theory of 
mollifiers, see e.g.~\cite{Rauch}.
In Section~\ref{sec:regularity}, we discuss 
the regularity of solutions of symmetric hyperbolic systems and in 
Section~\ref{sec:global well-posedness} we prove the well-posedness of the 
Cauchy problem.
Finally in Section~\ref{sec:Ex SHS} and Section~\ref{sec:Ex SPS} 
we provide some examples of Friedrichs systems with admissible boundary 
conditions.

 \subsection*{Acknowledgements}
 
We would like to thank Claudio Dappiaggi, Nicol\`o Drago, Christian G\'e\-rard, Nadine Gro\ss e, Valter Moretti and Alexander Strohmaier for helpful discussions related to the topic 
of this paper.
N.G. thanks Nadine Gro\ss{}e and the University of 
Lorraine for their 
support.  S.M. acknowledges the support of the INFN-TIFPA project ``Bell'' as 
well as the support of the University of Trento.

\subsection*{Notation and convention}
\begin{itemize}
\item[-] The symbol $\KK$ denotes one of the elements of the set $\{\RR,\CC\}$.
\item[-] $\M:=(\M,g)$ is a globally hyperbolic $n+1$-dimensional manifold with 
timelike boundary 
$\bM$ and we adopt the convention that $g$ has the signature $(-,+,\dots,+)$.
\item[-] $t:\M\to\RR$ is a Cauchy temporal function and 
$\M_\T:=t^{-1}(0,T)$ 
is a time strip.
\item[-] $\n$ is the outward unit normal vector to $\bM$.
\item[-] $\flat:\T\M\to \T^*\M$ and $\sharp:\T^*\M\to \T\M$ are the musical 
isomorphisms.
\item[-] $\E$ is a $\KK$-vector bundle over $\M$ with $N$-dimensional fibers, 
denoted by $\E_p$ for $p\in\M$, and endowed with a Hermitian fiber metric 
$\fiber{\cdot}{\cdot}_p$ .
\item[-]  $\Gamma_{c}(\E), \Gamma_{sc}(\E)$ resp. $\Gamma(\E)$ denote the 
spaces of compactly supported, spacelike compactly supported resp. smooth 
sections of $\E$.
\item[-] $\oS$ is a symmetric system of constant characteristic and 
$\oS^\dagger$ denotes the formal adjoint operator with respect to the fiber 
metric $\fiber{}{}_p$.
\item[-]  $G_\oB$  and $G_{\oB^\dagger}$ are (future) admissible boundary 
conditions for $\oS$ 
and $\oS^\dagger$ respectively and $\oB:=\ker\G_\oB$ and $\oB^\dagger:=\ker 
\G_{\oB^\dagger}$.
\end{itemize}

\section{Geometric preliminaries}

Let $\M$ be a connected, oriented, time-oriented smooth manifold with boundary.
We assume $\M$ to be endowed with a smooth Lorentzian metric $g$. 
Here and 
in the following we shall assume that the boundary is timelike, \ie
the pullback of $g$ with respect to the natural inclusion $\iota: \bM \to \M$ 
defines a 
Lorentzian metric $\iota^*g$ on the boundary.
In the class of Lorentzian 
manifolds with timelike boundary, those called globally hyperbolic provide a 
suitable background where to analyze the Cauchy problem  for hyperbolic 
operators. 
\begin{definition}{\protect{\cite[Definition 
2.14]{Ake-Flores-Sanchez-18}}}\label{def:glob hyper manifold timelike boundary}
A \emph{globally hyperbolic manifold with timelike boundary} is a $(n + 
1)$-dimensional, oriented,
time-oriented, smooth Lorentzian manifold $\M$ with timelike boundary $\bM$ 
such 
that
\begin{itemize}
\item[(i)] $\M$ is causal, \ie  there are no closed causal curves;
\item[(ii)] for every point $p,q\in\M$, $J^+(p)\cap J^-(q)$ is compact, where 
$J^+(p)$ (\textit{resp.} $J^-(p)$) denotes the causal future (\textit{resp}. 
past) of $p$ (\textit{resp.} q).
\end{itemize}
\end{definition}
\begin{remark}
 In case of an empty boundary, this definition agrees with the standard one, 
see e.g.~\cite[Section 3.2]{Bee} or \cite[Section 1.3]{wave}. 
\end{remark}
Recently, Ak\'e, Flores and S\'anchez gave a characterization of globally 
hyperbolic 
manifolds with timelike boundary:

\begin{theorem}[\cite{Ake-Flores-Sanchez-18}, Theorem 1.1]\label{thm: Sanchez}
Any globally hyperbolic manifold with timelike boundary admits a Cauchy 
temporal function $t\colon \M\to\RR$ with gradient tangent to $\bM$.
This implies that $\M$ splits into $\RR \times \Sigma$ with metric 
$$ g= - \beta^2 d t^2 + h_t $$
where $\beta : \RR \times \Sigma \to \RR$ is a smooth positive function, $h_t$ 
is a Riemannian metric on each slice
$\Sigma_t:=\{t\} \times \Sigma$ varying smoothly with $t$, and these slices are 
spacelike Cauchy hypersurfaces with boundary 
$\bSigma_t:=\{t\}\times\partial\Sigma$, namely achronal sets intersected 
exactly 
once by every inextensible timelike curve.
\end{theorem}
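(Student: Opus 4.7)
The plan is to reduce the result to the classical Bernal--Sánchez splitting theorem for globally hyperbolic spacetimes without boundary via a doubling construction. First, I would form the Lorentzian double $\widehat{\M}:=\M\sqcup_{\bM}\M$, obtained by gluing two copies of $(\M,g)$ along their common timelike boundary. Because $\bM$ is timelike, the outward unit normal $\n$ is spacelike, so the natural involution $\sigma$ exchanging the two sheets is a smooth, time-orientation preserving isometry of $\widehat{\M}$. One then checks that $\widehat{\M}$ is a globally hyperbolic manifold \emph{without} boundary: causality of $\widehat{\M}$ follows from causality of $\M$ together with the fact that causal curves meeting $\bM$ can be reflected through $\sigma$, and compactness of causal diamonds $J^+(p)\cap J^-(q)$ in $\widehat{\M}$ reduces, via the projection $\widehat{\M}\to\M$ identifying the two sheets, to the corresponding compactness property in $\M$.

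Next, I would invoke the Bernal--Sánchez theorem on $\widehat{\M}$ to obtain a Cauchy temporal function $\hat t\colon\widehat{\M}\to\RR$, and then symmetrize by setting $t:=\tfrac12(\hat t+\hat t\circ\sigma)$. The sum of two past-directed timelike gradients is past-directed timelike, so $t$ is still a temporal function, and by construction $t\circ\sigma=t$. Since $\sigma$ fixes $\bM$ pointwise and reverses the normal direction, the identity $dt\circ d\sigma=dt$ evaluated on $\n$ at a boundary point yields $dt(\n)=-dt(\n)$, forcing $dt(\n)=0$; equivalently, $\nabla t$ is tangent to $\bM$. Restricting to $\M\subset\widehat{\M}$ then gives the desired Cauchy temporal function, and the normalized gradient flow of $-\nabla t/g(\nabla t,\nabla t)$ provides the splitting $\M\cong\RR\times\Sigma$, with $\beta^{-2}=-g(\nabla t,\nabla t)$ and $h_t$ the induced Riemannian metric on the level sets $\Sigma_t$. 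Tangency of $\nabla t$ to $\bM$ guarantees that the flow preserves $\bM$, yielding simultaneously the boundary splitting $\bM\cong\RR\times\bSigma$.

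The main obstacle is verifying that the \emph{Cauchy} property survives the symmetrization: a priori, the levels of $t$ need not be Cauchy, even though those of $\hat t$ are. To handle this I would argue that, because $\hat t$ is Cauchy and $\sigma$-equivariant up to a bounded shift, the levels of $t$ are still achronal and intersected exactly once by every inextensible timelike curve, using that any such curve in $\widehat{\M}$ either stays on one sheet (where $t$ restricts to a sum of a Cauchy temporal function and a bounded-on-compacts function) or crosses $\bM$ transversely, in which case the contribution of $\hat t\circ\sigma$ can be controlled by the symmetric behavior of the reflected curve. Should this direct argument prove too delicate, the fallback would be to mimic Bernal--Sánchez's smoothing procedure directly on $\widehat{\M}$ with a $\sigma$-invariant partition of unity, thereby producing a smooth $\sigma$-invariant Cauchy temporal function without ever leaving the space of Cauchy temporal functions.
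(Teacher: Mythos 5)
The paper itself offers no proof of this statement: it is quoted from Ak\'e--Flores--S\'anchez \cite{Ake-Flores-Sanchez-18}, so your argument can only be measured against theirs. Your doubling strategy is the well-known ``folklore'' shortcut, and it has a genuine gap which is precisely the reason a direct proof is needed: the Lorentzian double $\widehat{\M}=\M\sqcup_{\bM}\M$ carries a canonical smooth structure, but the glued metric is in general only Lipschitz across the gluing locus. In Fermi coordinates $(r,y)$ based on $\bM$ (with $r$ the distance to the boundary along spacelike normal geodesics) the metric reads $dr^2+g_r(y)$, so the doubled metric is $dr^2+g_{|r|}(y)$; this is $C^1$ only if the second fundamental form of $\bM$ vanishes, and smooth only if all odd-order normal derivatives of $g_r$ vanish at $r=0$. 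Neither is assumed. Consequently you cannot invoke Bernal--S\'anchez on $\widehat{\M}$ (the classical theorem requires a smooth metric, and the low-regularity variants only yield continuous Cauchy time functions, not temporal functions with a smooth orthogonal splitting $-\beta^2dt^2+h_t$), and the reflection identity $dt(\n)=-dt(\n)$ presupposes that the symmetrized $t$ is differentiable \emph{across} $\bM$ inside $\widehat{\M}$, which is exactly what is unavailable. The involution $\sigma$ is an isometry of the glued metric, but of a metric that is not smooth, so every subsequent step that needs smoothness breaks.

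Two smaller remarks. First, the step you single out as the ``main obstacle'' is actually the unproblematic one: if $\hat t$ is a Cauchy temporal function then so is $\hat t\circ\sigma$ (since $\sigma$ would be a time-orientation preserving isometry), and the average of two Cauchy temporal functions, each surjective onto $\RR$ along every inextendible causal curve, is again a Cauchy temporal function; no bounded-shift analysis is required. Second, global hyperbolicity of the double deserves more care than ``reflect the curve'': causal curves in $\widehat{\M}$ project to curves in $\M$ that may meet $\bM$ tangentially or reflect infinitely often, and closedness of the causal diamonds must still be checked. But these are secondary; the decisive defect is the loss of regularity of the doubled metric. The argument of \cite{Ake-Flores-Sanchez-18} does not reduce to the boundaryless case by doubling; it adapts the Geroch volume-function construction and the Bernal--S\'anchez smoothing directly to manifolds with timelike boundary, controlling the temporal function near $\bM$ so as to force its gradient to be tangent there.
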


\subsection{Friedrichs systems of constant characteristic}

Let $\E\to\M$ be a Hermitian vector bundle over a globally hyperbolic manifold 
with timelike boundary $\M$, namely a $\KK$-vector bundle with finite rank $N$ 
endowed with a nondegenerate Hermitian fiber metric
$\fiber{\cdot}{\cdot}_p\colon\E_p\times\E_p\to\KK$.

\begin{definition}\label{def:symm syst}
 A linear differential operator $\oS \colon \Gamma(\E) \to \Gamma(\E)$ of first 
order is called a \emph{symmetric system} over $\M$ if 
\begin{enumerate}
\item[(S)] The principal symbol $\sigma_\oS (\xi) \colon \E_p \to \E_p$ is 
hermitian with respect to $\fiber{\cdot}{\cdot}_p$ for every $\xi\in \T^*_p\M$ 
and 
for every $p \in \M$.
\end{enumerate}
Additionally, we say that $\oS$ is \emph{hyperbolic} respectively 
\emph{positive} if it holds:
\begin{enumerate}
\item[(H)]\label{conditionH} For every future-directed timelike covector $\tau 
\in \T_p^*\M$, the 
bilinear form $\fiber{\sigma_\oS (\tau) \cdot}{\cdot}_p$ is positive definite 
on 
$\E_p$ for every $p\in\M$;
\item[(P)]\label{conditionP} For any Cauchy hypersurface $\Sigma_t\subset \M$, 
the 
quadratic form $\phi\mapsto\fiber{\Re e(\oS^\dagger 
+\oS)\phi}{\phi}$ on $\Sigma_t$ is 
uniformely bounded from below by a positive scalar multiple $c_t$, depending continuously on $t$, of the
quadratic 
form 
$\phi\mapsto\fiber{\phi}{\phi}$, which is assumed to be positive definite.
\end{enumerate}
\end{definition}
\begin{definition}\label{def:Fsyst}
We call \emph{Friedrichs system}, any symmetric system $\oS$ which is 
hyperbolic 
or positive.
Furthermore, we say that $\oS$ is \emph{of constant characteristic} if 
$\dim\ker \sigma_\oS(\n^\flat)$ is constant.
In particular, if $
\sigma_\oS(\n^\flat)$ has maximal rank we say that $\oS$ is \emph{nowhere 
characteristic}.
\end{definition}
\begin{remark}
Notice that Definition~\ref{def:symm syst} depends on the fiber metric 
$\fiber{\cdot}{\cdot}_p$. 
\end{remark}
\begin{example}
Consider the $n+1$-dimensional Minkowski spacetime 
$$\M=\RR \times \RR^n \qquad \eta=-dt^2+\sum_{j=1}^n dx_j^2$$ and let 
$\E:=\M\times \CC^N$ be a trivial vector bundle with the canonical fiber metric 
$\fiber{\cdot}{\cdot}_{\CC^N}$.
Any linear differential operator $\oS\colon \Gamma(\E) \to \Gamma(\E)$ of first 
order reads in a point $p\in \M$ as
\begin{equation*} 
\oS:=  A_0(p) \partial_t + \sum_{j=1}^n A_j(p) \partial_{x_j} + C(p)
\end{equation*}
where the coefficients $A_0, A_j,C$ are $N\times N$ matrices, with $N$ being 
the 
rank of $E$, depending smoothly on $p\in\M$.
In these coordinates, Condition~(S) in Definition~\ref{def:symm syst} reduces 
to 
$$A_0=A_0^\dagger \qquad \text{and} \qquad A_j=A_j^\dagger$$
 for $j=1,\dots, n$, where $\dagger$ is the complex conjugate of the transposed 
matrix.
Condition (H) and (P) can be stated respectively as follows:  
 $$(A_0 + \sum_{j=1}^{n} \alpha_j A_j) >0 \quad\text{ is positive definite for 
$\sum_{j=1}^{n} \alpha_j^2 <1$ ,}$$ 
 $$\Re e( C+C^\dagger - \frac{\partial_t (\sqrt{g}A_0)}{\sqrt{g}}- \sum_{j=1}^n 
\frac{\partial_{x_j} (\sqrt{g}A_j)}{\sqrt{g}})\quad \text{ is positive 
definite,} $$
  where $g$ is the absolute value of the determinant of the Lorentzian metric.
\end{example}

As we shall see in Section~\ref{sec:spin geom}, a prototype example of a first 
order system is the so-called classical Dirac operator. In this setting, the 
naturally defined fiber metric on the spinor bundle is indefinite rather than 
Hermitian.
It turns out that assuming the fiber metric to 
be positive-definite is not a loss of generality for a \emph{symmetric 
hyperbolic system}.
\begin{lemma}\label{lem:scalprod}
Let $\E$ be a $\KK$-vector bundle endowed with an indefinite nondegenerate 
sesquilinear fiber metric $\fiber{\cdot}{\cdot}_p$ and let $\oS$ be a 
symmetric  hyperbolic system
 with respect to $\fiber{\cdot}{\cdot}_p$. 
The operator $\fS_\beta:=\sigma_\oS(dt)^{-1} \oS$ is a symmetric hyperbolic
system
with respect to the positive-definite Hermitian fiber metric
\begin{equation}\label{eq:scalarprod}
\bracket{\cdot}{\cdot}_\beta:=\beta\fiber{\sigma_{\oS}(dt)\cdot}{\cdot}_p\,,
\end{equation}
 where $\beta:\M\to\RR^+$ is chosen on account of {\rm Theorem~\ref{thm: 
Sanchez}}.
Moreover, for any boundary space $\oB$, the Cauchy problem for the
operator $\fS_\beta$ 
  is equivalent to the Cauchy problem for $\oS$.
 \end{lemma}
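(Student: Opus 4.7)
The plan is to verify three things in sequence: that $\bracket{\cdot}{\cdot}_\beta$ is a positive-definite Hermitian fiber metric; that $\fS_\beta$ satisfies conditions (S) and (H) with respect to it; and that the two Cauchy problems share the same solution space. Everything is algebraic once one observes that condition (S) for $\oS$ makes $\sigma_\oS(dt)$ Hermitian with respect to $\fiber{\cdot}{\cdot}_p$, while condition (H), applied to the future-directed timelike covector $dt$, makes $\fiber{\sigma_\oS(dt)\cdot}{\cdot}_p$ positive definite. In particular $\sigma_\oS(dt)$ is fibrewise invertible, so $\fS_\beta$ is a well-defined first-order operator with principal symbol $\sigma_{\fS_\beta}(\xi)=\sigma_\oS(dt)^{-1}\sigma_\oS(\xi)$.

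Hermitianness of $\bracket{\cdot}{\cdot}_\beta$ follows from
\[
\bracket{u}{v}_\beta=\beta\fiber{\sigma_\oS(dt)u}{v}_p=\beta\fiber{u}{\sigma_\oS(dt)v}_p=\overline{\bracket{v}{u}_\beta},
\]
and positive-definiteness is immediate from $\beta>0$ together with (H). The symmetry of $\fS_\beta$ with respect to $\bracket{\cdot}{\cdot}_\beta$ reduces to the cancellation
\[
\bracket{\sigma_{\fS_\beta}(\xi)u}{v}_\beta=\beta\fiber{\sigma_\oS(dt)\sigma_\oS(dt)^{-1}\sigma_\oS(\xi)u}{v}_p=\beta\fiber{\sigma_\oS(\xi)u}{v}_p,
\]
whose right-hand side is symmetric in $u,v$ (up to conjugation) by (S). Specialising to a future-directed timelike $\tau$ and setting $v=u$ yields hyperbolicity: $\bracket{\sigma_{\fS_\beta}(\tau)u}{u}_\beta=\beta\fiber{\sigma_\oS(\tau)u}{u}_p>0$ by (H).

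Equivalence of the two Cauchy problems is then automatic: since $\sigma_\oS(dt)$ is fibrewise invertible, $\fS_\beta\Psi=\tilde\f$ is equivalent to $\oS\Psi=\sigma_\oS(dt)\tilde\f$, so the bijection $\tilde\f\leftrightarrow\sigma_\oS(dt)\tilde\f$ identifies sources while preserving solutions, and the initial and boundary conditions involve only $\Psi$ itself. There is no real obstacle here; the only point requiring a moment of care is that $\fiber{\cdot}{\cdot}_p$ is allowed to be indefinite, but conditions (S) and (H) are imposed on operators and on the sesquilinear form $\fiber{\sigma_\oS(\xi)\cdot}{\cdot}_p$ rather than on the signature of the fiber metric itself, so the manipulations above remain valid verbatim.
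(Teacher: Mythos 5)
Your proposal is correct and follows essentially the same route as the paper: Hermitianness and positive-definiteness of $\bracket{\cdot}{\cdot}_\beta$ from (S) and (H) applied to $dt$, symmetry of $\fS_\beta$ via the cancellation $\bracket{\sigma_{\fS_\beta}(\xi)\cdot}{\cdot}_\beta=\beta\fiber{\sigma_\oS(\xi)\cdot}{\cdot}_p$, and equivalence of the Cauchy problems by the fibrewise invertibility of $\sigma_\oS(dt)$. You are in fact slightly more explicit than the paper in spelling out the verification of condition (H) for $\fS_\beta$, but this is the same argument.
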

\begin{proof}
On account of Properties (S),the fiber 
metric~\eqref{eq:scalarprod} is a Hermitian fiber metric.
In particular, for 
any $\xi\in \T^*_p\M$ it holds
\begin{eqnarray*}
\bracket{\sigma_\fS(\xi)\cdot}{\cdot}_\beta&=& 
\bracket{\sigma_\oS(dt)^{-1}\sigma_\oS(\xi)\cdot}{\cdot}_\beta\\
&=&\beta\fiber{\sigma_\oS(\xi)\cdot}{\cdot}_p\\
&=& 
\beta\fiber{\cdot}{\sigma_\oS(\xi)\cdot}_p \\
&=&\beta\fiber{\cdot}{\sigma_\oS(dt)\sigma_\oS(dt)^{-1}\sigma_\oS(\xi)\cdot}_p\\
&=& 
\bracket{\cdot}{\sigma_\oS(dt)^{-1}\sigma_\oS(\xi)\cdot}_\beta\\
&=& 
\bracket{\cdot}{\sigma_\fS(\xi)\cdot}_\beta\,,
\end{eqnarray*}
where we used Property (S) in the second and fourth equalities.
Moreover, any 
solution of the Cauchy problem for $\oS$  is a solution of the Cauchy problem 
for $\fS_\beta$ where the right-hand side is given by 
$(\frac{1}{\beta}\sigma_\oS(dt)^{-1}\f, \h)$.
\end{proof}

From now on, every time symmetric hyperbolic systems will come into play, the positive definite inner product which will be involved will be $\bracket{\cdot}{\cdot}_\beta$ from Lemma \ref{lem:scalprod}.
It will be denoted by $\fiber{\cdot}{\cdot}$ in order to keep notations simple.\\

The reader may wonder whether a symmetric system can be assumed to enjoy
property (P).
With the 
next lemma, we shall see that, at least on relatively compact subdomains, any 
symmetric hyperbolic
system can be transformed into a symmetric positive system such that the 
corresponding Cauchy problems remain equivalent.

\begin{lemma}\label{lem:sps in M_T}
Let $\M$ be a globally hyperbolic manifold with timelike boundary.
Let $t$ be a Cauchy temporal function and denote with 
$\M_\T$ a time strip, \ie $\M_\T:=t^{-1}(t_0,t_1)$. Finally let $\oS$ be a 
symmetric hyperbolic system.
 Then, for 
all $t_0,t_1\in\RR$ and for any $\lambda\in\RR$, the Cauchy problem for the 
symmetric system $\oK\colon \Gamma(\E_{|_{\M_\T}}) \to \Gamma(\E_{|_{\M_\T}})$ 
defined by
\begin{align*}\label{eq:K}
\oK:=  \oS + \lambda\sigma_{\oS}(dt)
\end{align*}
is equivalent to the Cauchy problem for $\oS$, namely
\begin{equation*}\label{CauchyK}
\begin{cases}{}
\oK\tilde\Psi=\tilde \f  \\
\tilde\Psi|_{\Sigma_0} = \tilde\h \\
\tilde\Psi \in \oB
\end{cases} 
\qquad\Longleftrightarrow\qquad
\begin{cases}{}
\oS \Psi=\f   \\
\Psi|_{\Sigma_0} ={\h} \\
 \Psi \in \oB,
\end{cases}
\end{equation*}
where $\tilde\f=e^{-\lambda t}\f$, $\tilde\h=\h$ and $\tilde\Psi=e^{-\lambda 
t}\Psi$. Moreover, for any relative compact set $U\subset \M$, there exists a 
constant $\lambda\equiv \lambda(U)$ such that $\oK$ is a positive symmetric 
system.
\end{lemma}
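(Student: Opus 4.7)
The plan is to treat the two assertions of the lemma in turn. For the equivalence of the Cauchy problems, the key tool is the first-order Leibniz rule: for any smooth scalar $f\colon\M\to\RR$ and any $\Psi\in\Gamma(\E)$, the operator $\oS$ satisfies $\oS(f\Psi)=\sigma_\oS(df)\Psi+f\,\oS\Psi$. Specializing to $f=e^{\lambda t}$ and writing $\Psi=e^{\lambda t}\tilde\Psi$, I would get $\oS\Psi=e^{\lambda t}(\oS+\lambda\sigma_\oS(dt))\tilde\Psi=e^{\lambda t}\oK\tilde\Psi$, so that $\oS\Psi=\f$ becomes $\oK\tilde\Psi=e^{-\lambda t}\f=\tilde\f$. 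The initial datum on $\Sigma_0$ transforms by the same scalar (absorbed in the definition of $\tilde\h$), and the linear boundary condition $\Psi_{|\bM}\in\oB$ is preserved under pointwise multiplication by the non-zero scalar $e^{-\lambda t}$, since each fibre $\oB_p$ is a linear subspace.

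For the second assertion, I would first observe that $\oK$ remains a symmetric system: its principal symbol coincides with $\sigma_\oS$, while the added term $\lambda\sigma_\oS(dt)$ is a Hermitian bundle endomorphism by (S). To check (P) on a relatively compact $U$, I would compute the zeroth-order operator $\oK^\dagger+\oK$. A standard integration by parts, using that $\sigma_\oS$ is Hermitian, shows that the principal symbols of $\oS$ and $\oS^\dagger$ cancel, so $\oS^\dagger+\oS$ is a zeroth-order Hermitian bundle endomorphism $Z$. Combined with the Hermiticity of $\sigma_\oS(dt)$, this yields
\begin{equation*}
\Re e(\oK^\dagger+\oK)=\Re e(Z)+2\lambda\,\sigma_\oS(dt).
\end{equation*}

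Finally, I would fix a relatively compact open $U\subset\M$ and select $\lambda$. On the compact closure $\overline{U}$, the continuous Hermitian endomorphism $\Re e(Z)$ is bounded below by some real constant $-C$, while by (H), applied to the future-directed timelike covector $dt$ associated with the Cauchy temporal function $t$ (the convention already tacitly used in Lemma~\ref{lem:scalprod}), $\sigma_\oS(dt)$ is positive definite with smallest eigenvalue bounded below by some $c_1>0$ on $\overline{U}$. Taking $\lambda=\lambda(U)$ strictly greater than $C/(2c_1)$ then delivers a strictly positive uniform lower bound for $\Re e(\oK^\dagger+\oK)$ on $U$; continuity in $t$ of the resulting constant on each slice $\Sigma_t\cap U$ follows from the continuity of all ingredients together with the compactness of the intersection. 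The main obstacle I anticipate is the careful bookkeeping of conventions (future-directed covectors, signs appearing in the formal adjoint); once these are fixed, both parts reduce to routine algebra.
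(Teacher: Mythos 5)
Your proposal is correct and follows essentially the same route as the paper: the first-order Leibniz rule gives the conjugation identity $\oS(e^{\lambda t}\tilde\Psi)=e^{\lambda t}\oK\tilde\Psi$, the linear boundary condition is preserved under multiplication by a nonvanishing scalar, and positivity on a relatively compact $U$ is obtained by choosing $\lambda$ large enough on $\overline{U}$ so that $\oS+\oS^\dagger+2\lambda\sigma_{\oS}(dt)$ is positive definite, using that $\oS+\oS^\dagger$ is zero-order and $\sigma_{\oS}(dt)$ is positive definite by (H). The only differences are cosmetic: you conjugate in the opposite direction and make the bounds $-C$ and $c_1$ explicit where the paper simply asserts the existence of a sufficiently large $\lambda$.
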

\begin{proof}
 For every 
$\Psi\in\Gamma(\E)$ and for every $t\in\RR$, we have
\begin{eqnarray*}
\oK(e^{-\lambda t}\Psi)&=&\left(\oS+\lambda\sigma_{\oS}(dt)\right)(e^{-\lambda 
t}\Psi)\\
&=&\sigma_{\oS}(d e^{-\lambda t})\Psi+e^{-\lambda 
t}\left(\oS+\lambda\sigma_{\oS}(dt)\right)\Psi\\
&=&-\lambda e^{-\lambda t}\sigma_{\oS}(dt)\Psi+e^{-\lambda 
t}\left(\oS+\lambda\sigma_{\oS}(dt)\right)\Psi\\
&=&e^{-\lambda t}\oS\Psi,
\end{eqnarray*}
which shows the correspondence between the Cauchy problems for $\oS$ and $\oK$.
By assumption, $\oS+\oS^\dagger$ is a zero-order operator and 
$\Psi\mapsto\fiber{\sigma_{\oS}(dt)\Psi}{\Psi}_p$ is positive definite on $\E$, 
therefore on every compact subset of $\M$ there exists a sufficiently large 
real $\lambda$ (depending on the compact set) such that the operator 
$\oK+\oK^\dagger=\oS+\oS^\dagger+2\lambda\sigma_S(dt)$ is positive definite. 
\end{proof}

\begin{remark}\label{r:assumptionoflemmalem:sps}
Actually the assumptions of Lemma \ref{lem:sps in M_T} may be weakeaned as 
follows: it is namely sufficient to assume $\oS$ to be symmetric, i.e. with 
$\sigma_{\oS}(\xi)^*=\sigma_{\oS}(\xi)$ for all $\xi\in\T^*\M$, and the family 
of 
pointwise quadratic forms $\Psi\mapsto\fiber{\sigma_{\oS}(dt)\Psi}{\Psi}$ to 
be 
uniformely bounded from below by a positive constant to get the result.
However those assumptions are equivalent to $\oS$ being symmetric hyperbolic 
for a perturbed Lorentzian metric $\bar{g}$ on $\M$.
For by continuity the quadratic form 
$\Psi\mapsto\fiber{\sigma_{\oS}(dt)\Psi}{\Psi}$ remains positive definite for 
all $\xi$ in an open neighborhood of $dt$ in $\T^*\M$.
We may assume without loss of generality that neighborhood to be an open cone 
in $\T^*\M$ which is contained in the set of future timelike covectors for $g$ 
and which depends smoothly on the base-point.
Now modifying the original metric $g$ only in $\partial_t$-direction, it is 
possible to obtain a new Lorentzian metric $\bar{g}$ such that its set of 
future timelike covectors coincides with -- or at least is in contained in -- 
the above cone neighborhood.
We may choose the same time orientation for that new metric $\bar{g}$.
Since its future cone is contained in the one of $g$, the new Lorentzian metric 
is globally hyperbolic (any timelike curve for $\bar{g}$ is a 
timelike curve for $g$) and $\oS$ becomes a symmetric hyperbolic operator on 
$(\M,\bar{g})$ by definition.
For the general discussion of perturbations of globally hyperbolic metrics, we 
refer to e.g. \cite[Sec. 4.2]{Ake-Flores-Sanchez-18}.
\end{remark}

We conclude this section, by deriving the Green identity for any first-order 
linear differential operator. To this end, consider the scalar product defined 
by
\begin{equation}\label{eq:L2}
  \scalar{\Phi}{\Psi}_{\M}
 :=\int_{\M} \fiber{\Phi}{\Psi} \, \vol_{\M}\,,
\end{equation}
 for all $\Psi,\Phi\in\Gamma(\E)$ such that 
$\supp\Psi\cap\supp\Phi$ 
is compact, where $\vol_{\M}$ is the metric-induced volume element.  

\begin{lemma}\label{lem:green id}
Let $\M$ be a 
manifold with Lipschitz boundary $\bM$ and $\oS$ be any first-order 
linear differential operator acting on sections of some Hermitian vector bundle 
$\E$ over $\M$.
Denote by $\oS^\dagger$ the formal adjoint of $\oS$.
Then for every $\Phi\in 
\Gamma_c(\E_{|_{\M}})$,
\begin{equation}\label{eq:Green Id}
\Re e\left(\scalar{\oS \Phi}{\Phi}_{\M} - 
\scalar{\Phi}{\oS^\dagger\Phi}_{\M}\right) = \Re e
\scalar{\Phi}{\sigma_{\oS}(\n^\flat) \Phi}_{\partial {\M} }\,,
\end{equation}
where $\n$ is the outward unit normal vector to $\partial{\M}$ and 
${}^\flat\colon \T\M\to \T^*\M$ denotes the musical isomorphism.
If furthermore $\oS$ is symmetric i.e., its principal symbol is Hermitian, then 
\eqref{eq:Green Id} holds without taking the real parts on both sides.
\end{lemma}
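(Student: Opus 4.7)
The plan is to express $\fiber{\oS\Phi}{\Phi} - \fiber{\Phi}{\oS^\dagger\Phi}$ pointwise as the divergence of a vector field on $\M$ and then invoke the classical divergence theorem, which is valid on domains with Lipschitz boundary for compactly supported integrands.

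First I would work in a local trivialization. Writing $\oS = A^i\partial_i + C$ with $A^i := \sigma_\oS(dx^i)$ and using the usual local expression of the formal adjoint with respect to $\fiber{\cdot}{\cdot}_p$ and $\vol_\M$, a direct Leibniz computation gives the pointwise identity
\begin{equation*}
\fiber{\oS\Phi}{\Psi} - \fiber{\Phi}{\oS^\dagger\Psi} \,=\, \frac{1}{\sqrt{|g|}}\,\partial_i\!\bigl(\sqrt{|g|}\,\fiber{A^i\Phi}{\Psi}\bigr) \,=\, \mathrm{div}(X_{\Phi,\Psi}),
\end{equation*}
where $X_{\Phi,\Psi}$ is the vector field intrinsically characterized by the condition $g(X_{\Phi,\Psi},V) = \fiber{\sigma_\oS(V^\flat)\Phi}{\Psi}$ for every $V\in\T\M$. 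The tensorial formulation of the right-hand side shows that the formula is independent of the chart.

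Next, setting $\Psi=\Phi$ (so $X_{\Phi,\Phi}$ has compact support) and applying the divergence theorem on $\M$ yield
\begin{equation*}
\scalar{\oS\Phi}{\Phi}_\M - \scalar{\Phi}{\oS^\dagger\Phi}_\M \,=\, \int_{\bM} g(X_{\Phi,\Phi},\n)\,\vol_{\bM} \,=\, \scalar{\sigma_\oS(\n^\flat)\Phi}{\Phi}_{\bM}.
\end{equation*}
The conjugate symmetry of $\fiber{\cdot}{\cdot}_p$ gives $\scalar{\sigma_\oS(\n^\flat)\Phi}{\Phi}_{\bM} = \overline{\scalar{\Phi}{\sigma_\oS(\n^\flat)\Phi}_{\bM}}$ and, likewise, $\scalar{\Phi}{\oS^\dagger\Phi}_\M = \overline{\scalar{\oS^\dagger\Phi}{\Phi}_\M}$, so taking real parts on both sides produces exactly \eqref{eq:Green Id}.

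Finally, in the symmetric case $\sigma_\oS(\n^\flat)$ is Hermitian on each fiber, hence $\fiber{\sigma_\oS(\n^\flat)\Phi}{\Phi}$ is real-valued; the identity displayed above (prior to taking real parts) then forces its left-hand side to be real as well, so the real parts may be dropped on both sides. The only genuine computation is the pointwise divergence formula in step one; its main pitfall is keeping the sign conventions between $\n$, $\n^\flat$, and the musical isomorphisms consistent, while the remaining ingredients — Stokes on Lipschitz domains and the compactness of support — are entirely standard.
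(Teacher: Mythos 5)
Your proposal is correct and follows essentially the same route as the paper: the vector field $X_{\Phi,\Phi}$ with $g(X_{\Phi,\Phi},V)=\fiber{\sigma_\oS(V^\flat)\Phi}{\Phi}$ is exactly the (real part of the) $n$-form $\omega=\sum_j\fiber{\sigma_{\oS}(b^*_j)\Phi}{\Phi}\,b_j\lrcorner\vol_{\M}$ used in the paper, and your divergence computation is the paper's identity $d\omega=\left(\fiber{\oS\Phi}{\Phi}-\fiber{\Phi}{\oS^\dagger\Phi}\right)\vol_{\M}$ followed by Stokes' theorem on a Lipschitz domain. The only cosmetic differences are that you work in coordinates with $\sqrt{|g|}$ factors instead of a synchronous frame for a metric connection, and you take real parts at the end rather than building them into the form from the start; both variants handle the symmetric case identically.
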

\begin{proof}
Let $\nabla$ be any metric covariant derivative on $\E$.
Let $b_0,\dots, b_n$ be a local tangent frame which is synchronous at the point 
under con\-si\-de\-ra\-tion, i.e. $\nabla b_j=0$, and denote with 
$b_0^*,\dots,b_n^*$ 
the dual basis. 
In such basis, the operator $\oS$ and its formal adjoint $\oS^\dagger$ read as
$$\oS= \sum_{j=0}^n \sigma_{\oS}(b^*_j)\nabla_{b_j} + C \,, \qquad \oS^\dagger= 
\sum_{j=0}^n - \sigma_{\oS}(b^*_j)^\dagger\nabla_{b_j} -\nabla_{b_j}\left(
\sigma_{\oS}(b^*_j)^\dagger\right) + C^\dagger \,, $$
where $C$ is some zero-order operator.
Consider now the real $n$-form on $\M$ given by
\begin{equation}\label{eq:diff form}
\omega:=\sum_{j=0}^n  \Re e\fiber{\sigma_{\oS}(b^*_j) \Phi}{\Phi}_p 
b_j\lrcorner \text{vol}_{\M}
\end{equation} 
where $\lrcorner$ denotes denotes the
insertion of a tangent vector into the first slot of a form.
By straightforward computation we get 
\begin{eqnarray*}
d\omega&=& \Re e \sum_{j=0}^n \Big(\fiber{ \nabla_{b_j} (\sigma_{\oS}(b^*_j))
\Phi}{\Phi}_p +  \fiber{  \sigma_{\oS}(b^*_j) \nabla_{b_j}\Phi}{\Phi}_p \\
& &\phantom{\Re e \sum_{j=0}^n \Big(}- 
\fiber{ 
\Phi}{-  \sigma_{\oS}(b^*_j)^\dagger\nabla_{b_j}\Phi}_p \Big) 
\text{vol}_{\M}\\
&=& \Re e\left( \fiber{\oS \Phi}{\Phi}_p - 
\fiber{\Phi}{\oS^\dagger\Phi}_p\right) 
\text{vol}_{\M}.
\end{eqnarray*}
Using Stokes' theorem for 
manifolds with Lipschitz boundary we obtain \eqref{eq:Green Id}.
Note that $\n^*=\n^b$ along $\bM$ because of $\n$ being spacelike.
\end{proof}
\begin{remark}
In case $\oS$ is symmetric, the differential form $\omega$ defined above is 
real, therefore we obtain \eqref{eq:Green 
Id} without the real parts on both sides.
\end{remark}

\subsection{Admissible boundary conditions}\label{sec:admissible bc}

In this paper we are interested in sections subject to certain linear 
homogeneous boundary conditions, depending of course if we want to solve the 
forward or the backward Cauchy 
problem.
We begin by fixing a Cauchy surface $\Sigma_0:=t^{-1}(\{0\})$ 
where we shall assign the initial data.
\color{black}
 To define these boundary conditions we 
associate with each boundary point $q\in\bM$ a pair of linear 
subspaces $(\oB_{\pm})_q\subset 
\E_q$  whose dimensions are the same at all points of $\bM$ and which vary 
smoothly with $q$\color{black}.
In particular, we shall focus on a class 
introduced by 
Friedrichs and Lax-Phillips in~\cite{Friedrichs,Lax-Phillips}, dubbed 
admissible 
boundary conditions. 

\begin{definition}\label{def:admissible bc}
A smooth linear bundle map $\G_{\oB_+}\colon\E_{|_\bM 
}\rightarrow\E_{|_\bM}$ is 
said to be a \emph{future admissible boundary condition} for a first-order 
Friedrichs system $\oS$ if 
\begin{itemize}
\item[(i-f)] the pointwise kernel $\oB_+$ of $\G_{\oB_+}$ is a 
smooth subbundle of $\E_{|_\bM}$;
\item[(ii-f)] the quadratic form
$\Psi\mapsto\fiber{\sigma_{\oS}(\n^\flat)\Psi}{\Psi}_p$ is positive 
semi-definite 
on $\oB_+$ ;
\item[(iii-f)] the rank of $\oB_+$ is equal to the 
number of 
pointwise non-negative eigenvalues of 
$\sigma_{\oS}(\n^\flat)$ counting multiplicity.
\end{itemize}
 Similarly we say that $\G_{\oB_-}\colon\E_{|_\bM}\rightarrow\E_{|_\bM}$ is 
\emph{past admissible}
if
\begin{itemize}
\item[(i-p)] the pointwise kernel $\oB_-$ of $\G_{\oB_-}$ is a 
smooth subbundle of $\E_{|_\bM}$;
\item[(ii-p)] the quadratic form
$\Psi\mapsto\fiber{\sigma_{\oS}(\n^\flat)\Psi}{\Psi}_p$ is negative 
semi-definite 
on $\oB_-$;
\item[(iii-p)] the rank of $\oB_-$ is equal to the 
number of 
pointwise non-positive eigenvalues of 
$\sigma_{\oS}(\n^\flat)$ counting multiplicity.
\end{itemize}
The pair $\oB=(\oB_+, \oB_-)$ is called the \emph{admissible boundary space} or 
\emph{admissible boundary condition}
for $\oS$.
\end{definition}

\begin{remark}
The role of $\oB_+$ and $\oB_-$ will become apparent when 
looking for energy estimates for symmetric hyperbolic $\oS$, see Theorem 
\ref{thm:Energy Ineq}.
It turns out that $\oB_+$ (resp. $\oB_-$) is only needed in the future (resp. 
past) of the chosen Cauchy hypersurface $\Sigma_0$.
\end{remark}
\color{black}

Notice that if $\fiber{\cdot}{\cdot}$ is not positive definite, by 
Lemma~\ref{lem:scalprod} the new symmetric hyperbolic system 
$\fS_\beta$ together with the Hermitian positive-definite fiber metric 
$\bracket{\cdot}{\cdot}_\beta$ can be defined such that the Cauchy problems for 
both $\fS_\beta$ and $\oS$ become equivalent.
In particular, being an admissible 
boundary condition for $\fS_\beta$ is equivalent to be admissible for $\oS$. 
Indeed 
it holds
$$ 
\bracket{\sigma_{\fS_\beta}(\n^\flat)\Psi}{\Psi}_\beta\,=\,\fiber{\sigma_{\oS}
(\n^\flat)\Psi}{\Psi} \,.$$

Conditions (ii-f) and (ii-p) are equivalent to require that the boundary conditions are
\emph{maximal} with respect to properties  (iii-f) and (iii-p) respectively, 
see 
\cite[Theorem D.1]{Lax2006}, namely no smooth vector subbundles 
$(\oB')_\pm$ of $\E$ exist that properly contains $\oB_\pm$ and such that for 
all  
$\Phi'\in(\oB')_+$ and $\Phi''\in(\oB')_-$
$$\fiber{\sigma_{\oS}(\n^\flat)\Phi'}{\Phi'}\geq 0 \qquad 
\fiber{\sigma_{\oS}(\n^\flat)\Phi''}{\Phi''}\leq 0$$  
holds. 
\color{black} 
The fact that we do not assume $\sigma_{\oS}(\n^\flat)$ to be invertible 
(which is the case in \cite[App. D]{Lax2006}) does not play any role.
As a consequence, note that  
$\ker(\sigma_\oS(\n^\flat))\subset\oB_+\cap\oB_-$. \color{black}

\begin{definition}\label{def:adjointbc}
Let $\G_\oB$ be a future or past admissible boundary condition for a given 
first-order 
Friedrichs system $\oS$ on $\E$.
Assume $\oS$ to be of constant characteristic along $\bM$.
The \emph{adjoint boundary condition} $\G_\oB^\dagger$ is defined as the 
pointwise orthogonal projection onto $\sigma_{\oS}
(\n^\flat)(\oB)$.
In particular,
\[\oB^\dagger:=\ker(\G_\oB^\dagger)=\left(\sigma_{\oS}
(\n^\flat)(\oB)\right)^\perp.\]
If $\oB^\dagger=\oB$ then we say that $\oB$ is a {\em self-adjoint future/past 
admissible boundary space}.
\end{definition}

Similarly to \cite[Theorem D.2]{Lax2006}, it can be shown that,  if 
$\oB$ is a future admissible boundary condition for instance\color{black}, 
then the quadratic 
form 
$\Phi\mapsto\fiber{\sigma_{\oS}(\n^\flat)\Phi}{\Phi}_p$ is negative 
semi-definite on 
$\oB^\dagger$,
 whose rank coincides with the number of nonpositive  
eigenvalues of 
$\sigma_{\oS}(\n^\flat)$ counted with multiplicities.
Namely $\ker(\sigma_{\oS}(\n^\flat))$ must be contained in $\oB$ by its 
maximality property, so that, pointwise,
\begin{eqnarray*}
\dim(\oB^\dagger)&=&\dim(\E_{|_{\bM}}
)-\dim(\oB)+\dim(\ker(\sigma_ { \oS } (\n^\flat))\cap\oB)\\
&=&\dim(\E_{|_{\bM}}
)-\dim(\oB)+\dim(\ker(\sigma_ { \oS } (\n^\flat))),
\end{eqnarray*}
which is precisely the number of nonpositive  
eigenvalues of 
$\sigma_{\oS}(\n^\flat)$ counted with multiplicities.
This in turn implies that $\oB^\dagger$ is maximal such that 
$\Phi\mapsto\fiber{\sigma_{\oS}(\n^\flat)\Phi}{\Phi}_p$ is negative 
semi-definite on $\oB^\dagger$,
\color{black} because pointwise any subbundle containing 
$\oB^\dagger$ and enjoying the same property cannot intersect the subbundle 
spanned by the eigenvectors associated to the positive
 eigenvalues of 
$\sigma_{\oS}(\n^\flat)$ in a nontrivial way. 
Therefore it must have the same 
dimension as -- and therefore coincide with -- $\oB^\dagger$.
As a consequence, if $\oB$ is future admissible for a given Friedrichs systems 
$\oS$, 
then $\oB^\dagger$  is future admissible \color{black} for 
$\oS^\dagger$.
Moreover, 
by construction of 
$\oB^\dagger$,
 for all 
$(\Psi,\Phi)\in \oB\times_{\bM}\oB^\dagger$ it holds
\[\fiber{\sigma_{\oS}(\n^\flat)\Psi}{\Phi}_p=0\,.\]

\subsection{The forward and the backward Cauchy problem}

The backward Cauchy problem for a symmetric 
hyperbolic system $\oS$ is equivalent to the forward Cauchy problem for 
$-\oS$ on the time-reversed underground spacetime:

\begin{lemma}\label{l:timereversalSsymmhyp}
Let $t\colon\M\to\RR$ be a Cauchy temporal function with gradient tangent to 
the boundary on a given globally hyperbolic spacetime with timelike boundary 
$\M$.
Let $\oS$ be any symmetric hyperbolic system of constant characteristic along 
$\bM$ and with admissible boundary space $(\oB_+,\oB_-)$ along $\bM$.\\
Then $\bar{\oS}:=-\oS$ is symmetric hyperbolic on $\bar{\M}$, which is $\M$ 
with the same metric but with reversed time orientation.
Moreover, $(\oB_-,\oB_+)$ is an admissible boundary space for $\bar{\oS}$ along 
$\bM$.
\end{lemma}

\begin{proof}
A $1$-form $\xi\in \T^*\bar{\M}=\T^*\M$ is future-oriented causal on $\bar{\M}$ 
if and only if it is past-oriented causal on $\M$.
Therefore, if $\xi$ is future-oriented causal on $\bar{\M}$, then 
$\sigma_{\bar{\oS}}(\xi)=\sigma_{\oS}(-\xi)$ is a pointwise positive definite 
endormorphism of $\E$ because $-\xi$ is future-oriented on $\M$.
This shows the first statement.\\
The second statement follows from the fact that the outward unit normal $\n$ to 
$\bM$ does not change when the time orientation is reversed, so that, if 
$\psi\mapsto\fiber{\sigma_{\oS}(\n^\flat)\psi}{\psi}$ is positive 
(resp. negative) semi-definite on some subbundle of $\E_{|_{\bM}}$, then 
$\psi\mapsto\fiber{\sigma_{\bar{\oS}}(\n^\flat)\psi}{\psi}=-\fiber{\sigma_{\oS} 
(\n^\flat)\psi}{\psi}$ is negative (resp. positive) semi-definite on that same 
subbundle.
\end{proof}

As a consequence of Lemma \ref{l:timereversalSsymmhyp}, given any 
symmetric hyperbolic system $\oS$ on $\M$ with admissible boundary space 
$(\oB_+,\oB_-)$ and given any $\f\in\Gamma_c(\E)$ as well as 
$\h\in\Gamma_c(\E_{|_{\Sigma_0}})$, solving

\begin{equation}\label{eq:ibvpsymmhypgeneral}
\left\{\begin{array}{lll}\oS u&=\f&\textrm{ on }\M\\ 
 u_{|_{\Sigma_0}}&=\h&\textrm{ on }\Sigma_0\\ 
u_{|_{J_{\M}^+(\Sigma_0)\cap\bM}}&\in\oB_+&\textrm{ along 
}J_{\M}^+(\Sigma_0)\cap\bM\\ 
u_{|_{J_{\M}^-(\Sigma_0)\cap\bM}}&\in\oB_-&\textrm{ along 
}J_{\M}^-(\Sigma_0)\cap\bM\end{array}\right.
\end{equation}

on $\M$ is equivalent to solving 

\begin{equation}\label{eq:ibvpsymmhypgeneraltimereversed}
\left\{\begin{array}{lll}-\oS u&=-\f&\textrm{ on }\bar{\M}\\ 
 u_{|_{\Sigma_0}}&=\h&\textrm{ on }\Sigma_0\\ 
u_{|_{J_{\bar{\M}}^+(\Sigma_0)\cap\bM}}&\in\oB_-&\textrm{ along 
}J_{\bar{\M}}^+(\Sigma_0)\cap\partial\bar{\M}\\ 
u_{|_{J^-(\Sigma_0)\cap\bM}}&\in\oB_+&\textrm{ along 
}J_{\bar{\M}}^-(\Sigma_0)\cap\partial\bar{\M}\end{array}\right.
\end{equation}
on $\bar{\M}$: a section $u$ of $\E$ solves \eqref{eq:ibvpsymmhypgeneral} on 
$\M$ if and only if $u$ solves \eqref{eq:ibvpsymmhypgeneraltimereversed} on 
$\bar{\M}$.\\

\section{Energy Inequality}\label{sec:Energy Ineq}

In this section we derive a suitable energy inequality for 
Friedrichs systems in any time strip $\M_\T:=t^{-1}((0,T))$.
\color{black}
 By denoting with $\|\cdot\|_{L^2(\E_{|_{\M_\T}})} $ the  norm 
corresponding to the scalar product $\scalar{\cdot}{\cdot}_{\M_\T}$ defined by 
Equation~\eqref{eq:L2}, the main result of this section is the following.

\begin{theorem}[Energy Inequality]\label{thm:energyest}
Let $\M$ be a globally hyperbolic manifold
with timelike boundary, let $t\colon\M\to\RR$ be a Cauchy temporal 
function.
Let $\M_\T$ be the time strip $\M_\T:=t^{-1}((0,T))$.
Let 
$\oS$ be a Friedrichs system and denote by $\oS^\dagger$  the formal adjoint 
operator.
Assume $\M$ to be Cauchy-compact when $\oS$ is symmetric 
hyperbolic. 
Finally denote by $\G_\oB$ a future admissible boundary 
condition and by $\G_\oB^\dagger$ the adjoint 
boundary condition.
Then there exists a positive constant $\tilde C=\tilde{C}(\M_\T)$ such that, for all 
$\Phi\in \Gamma_c(\E_{|_{\M_\T}})$ satisfying 
$\Phi|_{\Sigma_{0}}=0$, $\Phi|_{\Sigma_{T}}=0$ and $\Phi_{|_{\bM_T}} \in
\oB^\dagger|_{\M_\T}$,
\begin{equation}\label{Energy Inequality2}
\|\Phi\|_{L^2(\E_{|_{\M_\T}})} \leq \tilde C \| \oS^\dagger 
\Phi\|_{L^2(\E_{|_{\M_\T}})}\,.
\end{equation}
Furthermore, if the bilinear form $\fiber{\sigma_\oS (dt) \cdot}{\cdot}_p$ is positive definite on
$\E_p$ for every $p\in\Sigma_\T$, then there exists a constant $\tilde D=\tilde{D}(\M_\T)>0$ such that, for all
$\Psi\in \Gamma(\E_{|_{\M_\T}})\cap L^2(\E|_{\M_\T})$ satisfying 
$\Psi|_{\Sigma_{0}}=0$ and $\Psi_{|_{\bM_T}} \in
\oB|_{\M_\T}$,
\begin{equation}\label{Energy Inequality2}
\|\Psi\|_{L^2(\E_{|_{\M_\T}})} \leq \tilde D \| \oS
\Psi\|_{L^2(\E_{|_{\M_\T}})}\,.
\end{equation}
\end{theorem}
Before proving our claim, we need some preliminary results on symmetric 
hyperbolic systems.
Let $t\colon \M\to \RR$ be a Cauchy temporal function and set 
$\Sigma_{t}^p := J^- (p) \cap \Sigma_{t}$  for $p\in \M$.
Recall that
$\bracket{\cdot}{\cdot}_\beta$ denotes the normalized Hermitian scalar
product~\eqref{eq:scalarprod} from Lemma \ref{lem:scalprod}. Let $| \cdot |_\beta$ be the corresponding
norm.
Finally, let $d\mu_{t}$ be the volume density of $\Sigma_{t}$.

\begin{theorem}[Energy estimates for symmetric hyperbolic 
systems]\label{thm:Energy Ineq}
Let $\M$ be a globally hyperbolic manifold with 
timelike boundary and let $\oS$ be a symmetric hyperbolic system of constant 
characteristic. 
Then, for each $p \in \M$ and all $t_0,t_1\in\RR$ with $t_0\leq 
t_1$ there 
exists 
a constant $C=C(p,t_0,t_1) > 0$ such that
\begin{equation}\label{eq:energy ineq}
\int_{\Sigma_{t_1}^p} |\Psi|^2_\beta d\mu_{t_1} \leq C e^{C(t_1 -t_0 )}  
\int_{t_0}^{t_1}\int_{\Sigma_{s}^p} |\oS \Psi|^2_\beta d\mu_{s} ds + e^{C(t_1 
-t_0 )}  \int_{\Sigma_{t_0}^p}  |\Psi|^2_\beta d\mu_{t_0} 
\end{equation}
holds for each $\Psi\in \Gamma(\E)$ satisfying 
$\Psi_{|_{\bM}}\in\ker\G_\oB$, where $G_\oB$ is a future admissible boundary 
condition 
for $\oS$.
In particular, $C=C(t_0,t_1)$ if $\M$ is Cauchy-compact. 
\end{theorem}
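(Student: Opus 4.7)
My plan is to carry out a standard energy integration on the truncated past domain
\[R_{t}:=J^-(p)\cap t^{-1}([t_0,t]),\qquad t_0\le t\le t_1,\]
modeled on the empty-boundary case, with the admissible boundary condition controlling the timelike-boundary contribution. First I would use Lemma~\ref{lem:scalprod} to pass to the normalized fiber metric $\bracket{\cdot}{\cdot}_\beta$, under which $|\Psi|^2_\beta\, d\mu_t$ is, up to a positive factor, the natural energy density $\fiber{\sigma_\oS(dt^\flat)\cdot}{\cdot}$ on $\Sigma_t$; admissibility of $\G_\oB$ is preserved since $\bracket{\sigma_{\fS_\beta}(\n^\flat)\cdot}{\cdot}_\beta=\fiber{\sigma_\oS(\n^\flat)\cdot}{\cdot}$, and I keep writing $\oS$ for the transformed operator.

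Next I would apply the Green identity of Lemma~\ref{lem:green id} to $(\Psi,\Psi)$ on $R_t$. The bulk part $\Re\,\scalar{(\oS-\oS^\dagger)\Psi}{\Psi}_{R_t}$ is zeroth order and controlled by $\|\Psi\|_{L^2(R_t)}^2$ with a constant depending on the sup-norm of $\oS+\oS^\dagger$ on the relatively compact $R_{t_1}$; rewriting and using Young's inequality absorbs a further $\|\oS\Psi\|^2_{L^2(R_t)}+\|\Psi\|^2_{L^2(R_t)}$ contribution. The boundary $\partial R_t$ splits into the spacelike caps $\Sigma_{t_0}^p$, $\Sigma_{t}^p$, the null hypersurface $\mathcal{N}_t:=\partial J^-(p)\cap t^{-1}([t_0,t])$, and the lateral timelike piece $\bM\cap R_t$. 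On the caps, property~(H) and the normalization identify the boundary integrand with $\pm|\Psi|^2_\beta$ up to a positive factor, yielding the terms involving $E(t):=\int_{\Sigma_t^p}|\Psi|^2_\beta\, d\mu_t$ and $E(t_0)$. On $\mathcal{N}_t$ the outward conormal is a future-directed null covector; by continuity of (H) on the closure of the future timelike cone, $\fiber{\sigma_\oS(\xi^\flat)\cdot}{\cdot}$ is positive semi-definite, so that contribution has the favorable sign and may be discarded. On $\bM\cap R_t$, admissibility (item~(ii) of Definition~\ref{def:admissible bc}) applied to the trace $\Psi|_{\bM}\in\oB$ yields the same conclusion and that term is dropped as well.

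Collecting the surviving terms, I would arrive at
\begin{equation*}
E(t)\le E(t_0)+C_0\int_{t_0}^{t}\int_{\Sigma_s^p}|\oS\Psi|^2_\beta\, d\mu_s\, ds+C_1\int_{t_0}^{t}E(s)\, ds,\qquad t\in[t_0,t_1],
\end{equation*}
and Grönwall's lemma in $t$ would then deliver \eqref{eq:energy ineq} with the exponential factor $e^{C(t_1-t_0)}$. When $\M$ is Cauchy-compact, $t^{-1}([t_0,t_1])$ is compact and the constants $C_0,C_1$ can be chosen uniformly in $p$, giving $C=C(\M_\T)$.

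The main obstacle will be the null piece $\mathcal{N}_t$: it is only Lipschitz, with a caustic at $p$ and possible further non-smooth loci beyond cut points, so invoking Green's identity requires the Lipschitz-boundary form of Lemma~\ref{lem:green id} together with a careful identification of the outward conormal as a null future-directed covector. Additional attention is needed near the corners $\partial J^-(p)\cap\bM$ and $\partial J^-(p)\cap\Sigma_{t_i}$ to ensure that the sign conclusions on each face extend up to those corners; a standard regularization by smooth, slightly enlarged past-domain neighborhoods followed by a limiting argument should handle both points while preserving the favorable signs of all boundary contributions.
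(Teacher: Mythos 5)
Your proposal is correct and follows essentially the same route as the paper: the authors also integrate the $n$-form $\omega$ of Lemma~\ref{lem:green id} over $J^-(p)\cap t^{-1}([t_0,t_1])$ via Stokes' theorem for Lipschitz boundaries, discard the null piece $\partial J^-(p)$ and the lateral piece $K\cap\bM$ by sign considerations (the latter exactly via item~\eqref{cond:nonnegadmissiblebc} of Definition~\ref{def:admissible bc}), and conclude with the Gr\"onwall-type argument. The only difference is presentational: the paper outsources the caps, the null face, and the Gr\"onwall step to the boundaryless case \cite[Theorem 5.3]{Ba} and only verifies the new timelike-boundary term, whereas you write the full energy argument out explicitly.
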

\begin{proof}
 We shall reduce to the proof of~\cite[Theorem 5.3]{Ba}. To this 
end, let us define the subset $K:=J^- (p)\cap t^{-1}([t_0 ,t_1]) \subset \M$  
and consider the $n$-differential form defined by Equation~\eqref{eq:diff form}.
Stokes' theorem for manifold with Lipschitz boundary yields
$$\int_K d\omega=\int_{\partial K} \omega = \int_{\Sigma^p_{t_1}}\omega - 
\int_{\Sigma^p_{t_0}} \omega + \int_{K\cap \bM} \omega +  \int_{ Y} \omega $$
where $Y:= \partial J^-(p) \, \cap \, t^{-1}([t_0,t_1])$. In order to reduce 
our 
proof to the one of~\cite[Theorem 5.3]{Ba} we only need to show that
$$\int_{K \cap \bM} \omega \geq 0 \,.$$
We choose a positively oriented orthonormal tangent basis $b_0, b_1 , \dots , 
b_n$ of $\T_q \M$ in such a way that $b_0=-\frac{1}{\beta}\partial_t$ and 
$b_1=\n$, so that the restriction of $\omega$ to $\bM$ is given by 
$$\iota^*\omega=\fiber{\sigma_\oS(\n^\flat)\Psi}{\Psi}_p\n\lrcorner\vol_{K\cap\M
}
=\fiber{\sigma_\oS(\n^\flat)\Psi}{\Psi} \vol_{K\cap\bM}\,.$$
Therefore
$$\int_{K\cap\bM}\omega= \int_{K\cap\bM} 
\fiber{\sigma_\oS(\n^\flat)\Psi}{\Psi}_p 
\vol_{K\cap\bM}\,.$$
Since $\Psi|_\bM\in\oB|_{\M_\T}$, property (i-f) of
Definition~\ref{def:admissible bc} implies that the r.h.s. of the last 
identity is nonnegative, which concludes the proof.
\end{proof}

Combining Theorem~\ref{thm:Energy Ineq} with 
Lemma~\ref{l:timereversalSsymmhyp}, we immediately obtain 
that if there exists a solution to the Cauchy problem~\eqref{CauchyM_T0} it 
must 
be unique and it propagates with at most the speed of light. We recall this 
results 
for the sake of completeness.

\begin{proposition}[Uniqueness and finite speed of 
propagation for symmetric hyperbolic system]\label{prop:finite}
Let $\M$ be a globally hyperbolic manifold with 
timelike boundary and let $\oS$ be a symmetric hyperbolic system of constant 
characteristic coupled with admissible boundary conditions.
If there exists $\Psi \in \Gamma(\E|_{\M_\T})$ satisfying the Cauchy 
problem~\eqref{CauchyM_T0} then it is unique and it propagates with at most the 
speed of light, \ie its support is contained inside the region 
$$ \mathcal{V}:= \Big( J\big(\supp\, \f \big)\cup J(\supp \h)\Big)\,,$$ 
where $J(\cdot)$ denote the causal future of a 
set.
\end{proposition}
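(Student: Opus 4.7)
The plan is to deduce both statements from the energy inequality of Theorem~\ref{thm:Energy Ineq}, exploiting linearity and causality of the problem.

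First observe that uniqueness is a corollary of finite propagation speed: given two solutions $\Psi_1,\Psi_2$ of \eqref{CauchyM_T0}, their difference $\Psi:=\Psi_1-\Psi_2$ solves the homogeneous Cauchy problem (with $\f=0$ and $\h=0$, and still $\Psi|_\bM\in\oB$), whose associated causal region $\mathcal V$ is empty; hence $\Psi\equiv 0$ follows at once once finite propagation speed is established.

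For finite propagation speed, fix $p\in\M_\T$ with $p\notin\mathcal V$ and first consider the case $t_1:=t(p)\geq t_0$. Apply Theorem~\ref{thm:Energy Ineq} directly to the solution $\Psi$ on the truncated causal past $K:=J^-(p)\cap t^{-1}([t_0,t_1])$, obtaining
$$\int_{\Sigma_{t_1}^p}|\Psi|_\beta^2\, d\mu_{t_1}\;\leq\; C e^{C(t_1-t_0)}\int_{t_0}^{t_1}\!\int_{\Sigma_s^p}|\f|_\beta^2\, d\mu_s\, ds + e^{C(t_1-t_0)}\int_{\Sigma_{t_0}^p}|\h|_\beta^2\, d\mu_{t_0}.$$
Since $\supp\h\subset\Sigma_{t_0}$ and $t(p)\geq t_0$, the hypothesis $p\notin J(\supp\f)\cup J(\supp\h)$ reduces to $p\notin J^+(\supp\f)\cup J^+(\supp\h)$. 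Consequently $\Sigma_s^p\cap\supp\f=\emptyset$ for every $s\in[t_0,t_1]$ and $\Sigma_{t_0}^p\cap\supp\h=\emptyset$, so the right-hand side vanishes. We conclude $\Psi\equiv 0$ on $\Sigma_{t_1}^p$, and in particular $\Psi(p)=0$.

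The remaining case $t(p)<t_0$ is handled by the time-reversed version of this argument: working with the Cauchy temporal function $-t$ one produces a symmetric hyperbolic structure for which $\oB$ remains admissible, and applies the corresponding energy inequality on the future causal diamond $J^+(p)\cap t^{-1}([t(p),t_0])$. The main subtlety throughout is to verify that the boundary contribution in Lemma~\ref{lem:green id}, picked up whenever $K$ meets $\bM$, has the correct sign. This is guaranteed by admissibility of $\oB$ (property \eqref{cond:nonnegadmissiblebc} of Definition~\ref{def:admissible bc}): the quadratic form $\fiber{\sigma_\oS(\n^\flat)\Psi}{\Psi}$ is nonnegative on $\oB$, so the boundary integral can be dropped in the estimate, exactly as already exploited inside the proof of Theorem~\ref{thm:Energy Ineq}.
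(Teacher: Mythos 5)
Your proposal is correct and follows essentially the same route as the paper: for a point $p$ outside $\mathcal{V}$ one applies the energy estimate of Theorem~\ref{thm:Energy Ineq} on the truncated causal past $J^-(p)\cap t^{-1}([t_0,t(p)])$, observes that the data terms on the right-hand side vanish because $J^-(p)$ misses $\supp\f$ and $\supp\h$, handles $t(p)<t_0$ by time reversal, and deduces uniqueness by linearity from the homogeneous problem having empty causal region. The boundary-sign remark via admissibility of $\oB$ is exactly the point already absorbed into the proof of Theorem~\ref{thm:Energy Ineq}, as you note.
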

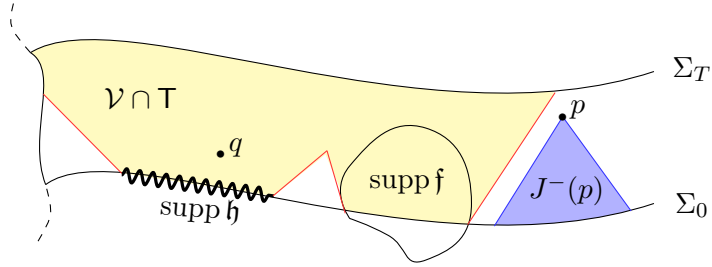
\begin{figure}[h!]
\centering
\begin{tikzpicture}
\begin{scope}       
    \clip {(9,1.75) .. controls (6,0.75) and (2,2.75).. (0,1.75) -- (0.8,2)  .. 
controls (1.2,1.75)  and (0.75,0.9) .. (1.0,0.25)  -- (1.0,0.25) .. controls 
(2,1) and (6,-1) .. (9,0) };    
    \fill[yellow!30] { plot[smooth, tension=1] coordinates { (5,0.8-1) 
(5.5,0.58-1) (6,0.25-1) (6.6,1-1) (6,2-1) (5, 1.5-1.1)   (5,0.8-1)  }};
;
                \node  at (5.5,1) {$\supp \f$};
\end{scope}

\begin{scope}    
    \clip {(9,1.75) .. controls (6,0.75) and (2,2.75).. (0.8,2) -- (0.8,2)  .. 
controls (1.2,1.75)  and (0.75,0.9) .. (1.0,0.25)  -- (1.0,0.25) .. controls 
(2,1) and (5,-1) .. (9,0) };     
    \fill[yellow!30] { (2,0.4) -- (0.97,1.45) -- (0.8,2) -- (0.8,2) .. controls 
(1,3).. (7.7,1.47)  -- (6.55,-0.28) -- (5,-0.1) -- (4.68,0.65) -- (4,0.1) -- 
(2,0.4)};
;
\end{scope}

\begin{scope}        
    \clip {(0,0) .. controls (2,1) and (6,-1) .. (9,0) -- (9,1.75) -- (9,1.75) 
.. controls (6,0.75) and (2,2.75).. (0,1.75) -- (0,0)};    
    \fill[blue!30] { (7.8,1.15) -- (6.9,-0.3)  .. controls ( 7.5, -.5) .. 
(8.7,-0.1) --(7.8,1.15)  };
;
                \node  at (5.75,0.3) {$\supp \f$};
\end{scope}

 \draw[]  (1.0,0.25) .. controls (2,1) and (6,-1) .. (9,0) ; 
 \draw[]  (0.8,2) .. controls (2,2.75) and (6,0.75) .. (9,1.75); 
  \draw[]  (1.0,0.25) .. controls (0.75,0.9) and (1.2,1.75) .. (0.8,2); 
    \draw[dashed]  (0.9,-0.5) .. controls (1.1,-0.15) .. (1.0,0.25); 
 
     \draw[dashed]  (0.8,2) .. controls   (0.5,2.35) .. (0.6,2.75); 

  \draw[decorate, decoration=snake, segment length=5,very thick] (2,0.38)  -- ( 
4,0.1)  node[midway,below,rotate=-5] {$\,\,\supp \h$}  ; 

            \draw[]  plot[smooth, tension=1] coordinates { (5,0.8-1) 
(5.5,0.58-1) (6,0.25-1) (6.6,1-1) (6,2-1) (5, 1.5-1)   (5,0.8-1) };

\draw[red!80] (2,0.4) -- (0.97,1.45);
\draw[red!80] (3.99,0.1) -- (4.7,0.7);
\draw[red!80] (4.94,-0.13) -- (4.7,0.7);
\draw[red!80] (6.55,-0.28) -- (7.7,1.47);
\draw[blue!80] (7.8,1.15) -- (6.9,-0.3);
\draw[blue!80] (7.8,1.15) -- (8.7,-0.1);

            \node  at (8,0.85) [label={$p$}] {};
            \node  at (7.8,1.15) {$\mathsmaller{\mathsmaller{\bullet}}$};

            \node  at (3.5,0.35) [label={$q$}] { };
            \node  at (3.3,0.65) {$\mathsmaller{\mathsmaller{\bullet}}$};
            \node  at (2.25,1) [label={${\mathcal{V}\cap \T}$}] { };
            \node  at (7.85,-0.3) [label={$J^-(p)$}] { };
            \node  at (9.5,-0.45) [label={$\Sigma_{0}$}] { };
            \node  at (9.5,1.35) [label={$\Sigma_{T}$}] { };

    \end{tikzpicture}
    \caption{Finite propagation of speed -- $\mathcal{V}\cap \T$.}
    \label{fig:finite speed}
\end{figure}
\begin{proof}
Assume $q\in J^+(\Sigma_{0})$  and consider any point $p$ outside the region 
$\mathcal{V}\cap \M_\T$, with $\M_\T:=t^{-1}(0,T)$ -- \cf 
Figure~\ref{fig:finite speed}.
This means that there is no future-directed causal
curve starting in $\supp\, \f \cup \supp\, \h $, entirely contained in 
$\mathcal{V}\cap\M_\T$, which terminates at $p$. 
As a consequence,
$\f|_{J^-(p)}\equiv 0$ and $\h|_{J^-(p)\cap\Sigma_{0}}\equiv 0$. Therefore, 
by 
Theorem~\ref{thm:Energy Ineq}, it follows that $\Psi$ vanishes in $J^-(p)$. 

The case $q\in J^-(\Sigma_{0})$ is obtained with the time reversal (see also 
Lem\-ma~\ref{l:timereversalSsymmhyp}).
Hence, $\Psi$ vanishes outside $\mathcal{V}$. 
\medskip

Assume that there exist $\Psi$ and $\Phi$ satisfying the same Cauchy 
pro\-blem \eqref{CauchyM_T0}. Then $\Psi-\Phi\in \Gamma(\E_{|_{\M_\T}})$ is a 
solution 
of \eqref{CauchyM_T0} with $\f=0$ and $\h=0$.
As we have already shown, the supports of $\Psi$ and $\Phi$ are contained in 
$\mathcal{V}\cap\M_\T$. Therefore, we can use Theorem~\ref{thm:Energy Ineq} to 
conclude that $\Psi-\Phi$ vanishes identically.
\end{proof}

We notice that, as in the boundaryless case, solving the Cauchy 
problem associated to a symmetric hyperbolic system for Cauchy-compact or 
arbitrary globally hyperbolic manifolds with timelike boundary are 
equivalent.
\begin{proposition}\label{Cauchycompactornotisthesameforsymmhyp}
Let $\M$ be a Cauchy-noncompact globally hyperbolic manifold with timelike 
boundary and let $(\M,g)=(\RR\times\Sigma_0,-\beta^2 dt^2\oplus h_t)$ be a 
splitting as in {\rm Theorem \ref{thm: Sanchez}}.
If, for a given symmetric hyperbolic system $\oS$ on $\M$, the Cauchy 
problem~\eqref{CauchyM_T0} can be solved on $(\RR\times U,-\beta^2 
dt^2\oplus h_t)$ for any relatively compact domain with smooth boundary 
$ U\subset\Sigma_0$ and any admissible boundary condition along 
$\partial U$, then so can it on $\M$ itself.
\end{proposition}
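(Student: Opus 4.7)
The plan is to reduce to the Cauchy-compact case by localization, using finite speed of propagation. Given data $\f\in\Gamma_c(\E_{|_{\M_\T}})$ and $\h\in\Gamma_c(\E_{|_{\Sigma_{t_0}}})$, compactness of the supports together with global hyperbolicity implies that $K:=\bigl(J(\supp\f)\cup J(\supp\h)\bigr)\cap\overline{\M_\T}$ is compact. Using the splitting $\M\cong\RR\times\Sigma_0$ from Theorem \ref{thm: Sanchez}, its projection $\pi(K)\subset\Sigma_0$ is also compact. I then choose a relatively compact open subset $U\subset\Sigma_0$ with smooth boundary such that $\pi(K)$ sits in the interior of $U$ with a strictly positive safety margin separating $K$ from the ``artificial'' part $\RR\times(\partial U\setminus\partial\Sigma_0)$ of the boundary of $\RR\times U$.

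Next, I equip $\RR\times U$, viewed as a globally hyperbolic (now Cauchy-compact) manifold with timelike boundary $\RR\times\partial U$, with an admissible boundary condition $\tilde{\G}_\oB$ that restricts to $\G_\oB$ on the original part $\RR\times(\partial\Sigma_0\cap\partial U)$ and is extended across the artificial part in any admissible way. Such an extension exists: over each boundary fibre of the artificial part one can take the direct sum of $\ker(\sigma_\oS(\n^\flat))$ with the sum of eigenspaces associated with the positive eigenvalues of $\sigma_\oS(\n^\flat)$; by the constant-characteristic hypothesis this construction varies smoothly and trivially satisfies all the conditions of Definition \ref{def:admissible bc}. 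By the standing assumption applied to this cylinder, there exists a strong solution $\tilde{\Psi}$ on $\RR\times U$ of the Cauchy problem with data $(\f|_{\RR\times U\cap\M_\T},\h|_U)$.

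Finite speed of propagation (Proposition \ref{prop:finite}), applied to $\tilde{\Psi}$ on $\RR\times U$, forces $\supp\tilde{\Psi}\subset K$; the safety margin in the choice of $U$ thus ensures that $\supp\tilde{\Psi}$ stays strictly away from the artificial boundary. Consequently the zero-extension $\Psi$ of $\tilde{\Psi}$ to $\M_\T$ coincides with $\tilde{\Psi}$ on an open neighbourhood of $\supp\tilde{\Psi}$ and vanishes outside it, which yields a strong solution of \eqref{CauchyM_T0} on $\M_\T$; the boundary condition $\Psi|_\bM\in\oB$ follows because near $\partial\Sigma_0\cap\partial U$ the two solutions agree, while elsewhere $\Psi\equiv 0\in\oB$. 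Uniqueness is immediate from Proposition \ref{prop:finite} applied directly to $\M$.

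The main obstacle is twofold: producing an admissible boundary condition on the artificial boundary of $\RR\times U$ compatible with the given $\G_\oB$, and confirming that the extension by zero is genuinely a strong (not merely distributional) solution across the artificial boundary. Both issues are handled by the safety-margin construction of $U$, which guarantees that $\tilde{\Psi}$ vanishes on an open collar of $\RR\times(\partial U\setminus\partial\Sigma_0)$, so the extension inherits the regularity of $\tilde{\Psi}$ and is indifferent to the precise admissible choice made on the artificial boundary.
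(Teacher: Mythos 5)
Your proposal is correct and follows essentially the same route as the paper: localize using finite propagation speed, pass to the Cauchy-compact cylinder $\RR\times U$ over a relatively compact neighbourhood of the projected causal hull of the data, solve there by hypothesis, and extend by zero since the solution's support never reaches the artificial part of $\partial U$. The only addition is your explicit construction of an admissible boundary condition on the artificial boundary (kernel plus positive spectral subbundle of $\sigma_\oS(\n^\flat)$), a point the paper leaves implicit in its phrase ``any admissible boundary condition along $\partial U$''.
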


\begin{proof}
The proof virtually coincides with that of \cite[Theorem 3.7.7]{Ba-lect}.
Let $\f,\h$ be the Cauchy data in \eqref{CauchyM_T0} and set 
$K:=\supp(\f)\cup\supp(\h)\subset\M$.
Then $K$ is a compact subset of $\M$ and therefore is included in some 
$\M_\T=(0,T)\times\Sigma_0$ for some real $0<T$.
Let $\hat{\Sigma}$ be the projection onto $\Sigma_0$ of the compact subset 
$J^{\M}(K)\cap([0,T]\times\Sigma_0)$ w.r.t. the splitting 
$\M=\RR\times\Sigma_0$.
Then there exists a relatively compact open neighborhood $U$ of 
$\hat{\Sigma}$ in $\Sigma_0$
with smooth boundary $\partial U$.
Note that part of the boundary of $U$ is contained in $\bM$ and it is only on 
that part that the support of the solution to \eqref{CauchyM_T0} may meet 
$\partial U$.
Consider now $\M':=\RR\times\overline{U}$ with metric $g':=-\beta^2 dt^2\oplus 
h_t$, where $\overline{U}$ is the closure of $U$ in $\Sigma_0$.
Then $\M'$ is a new Lorentzian manifold and is globally hyperbolic because of 
$\overline{U}$ being compact: it can be directly shown that every inextendible 
timelike curve in $\M'$ meets $\overline{U}$ exactly once; the main point is 
that, on $\overline{U}$, all metrics $h_t$ for $t$ in a compact interval are 
uniformly equivalent to some fixed metric.
We refer to the proof of \cite[Lemma A.5.14]{wave} that can be adapted to our 
situation.
Now $\M'$ is a Cauchy-compact globally hyperbolic manifold with timelike 
boundary, therefore there exists a unique solution $\psi$ to the Cauchy problem 
\eqref{CauchyM_T0} on $\M'_\T:=(0,T)\times\overline{U}$.
Since by finite propagation speed the support of $\Psi$ is contained in 
$J(K)\cap\M'_\T$, it meets $\bM'$ only along $\bM$ and, since by construction 
$\Psi$ must vanish along the rest of $\partial U$, the section $\Psi$ can be 
considered as a section of $\E$ on $\M_\T$.
Therefore there exists a solution $\Psi$ to \eqref{CauchyM_T0} on $\M_\T$.
Uniqueness holds on $\M_\T$ as well since it holds on $\M_\T'$.
Since this holds for any $0<T\in\RR$, we obtain global existence and 
uniqueness of solutions to \eqref{CauchyM_T0} on $\M$.
\end{proof}

\begin{remark}\label{rmk:SHPS}
Assume $\oS$ to be a symmetric hyperbolic system on a globally hyperbolic 
manifold with noncompact Cauchy hypersurfaces. On account of 
Proposition~\ref{Cauchycompactornotisthesameforsymmhyp}, to solve the Cauchy 
problem for $\oS$ in a time strip $\M_\T$ it is enough solving it on 
$[0,T]\times \overline U$, where $\overline{U}$ is compact. Since 
$[0,T]\times  \overline U$ is compact, then Lemma~\ref{lem:sps in M_T} 
guarantees the existence of a suitable $\lambda$ such that the operator $\oK$ 
is a symmetric positive system and it has an equivalent Cauchy problem. Summing 
up, the Cauchy problem for a symmetric hyperbolic system $\oS$ on a globally 
hyperbolic manifold $\M$ with non-compact Cauchy surfaces can be solved if the 
Cauchy problem for $\oK$ can be solved on $[0,T]\times\overline{U}$.
Therefore, we may prove existence and uniqueness for the Cauchy 
problem for $\oS$ via the auxiliary operator $\oK$ for sufficiently large 
$\lambda$.
Note that this works only if $\M$ is Cauchy-compact.
\end{remark}

We now have all the ingredients to prove Theorem~\ref{thm:energyest}. 

\begin{proof}[Proof of Theorem~\ref{thm:energyest}]
First, the proof of Theorem~\ref{thm:energyest} for a symmetric 
hyperbolic system on a Cauchy-compact globally hyperbolic manifold is an 
immediate consequence of 
Theorem~\ref{thm:Energy Ineq} since it suffices to integrate \eqref{eq:energy 
ineq} on $[0,T]$ after choosing $-\oS^\dagger$ -- which is again symmetric 
hyperbolic -- instead of $\oS$. 

Let $\oS$ be now a symmetric positive system of constant characteristic. By Lemma~\ref{lem:green id}, Green identity reads 
$$
\scalar{\Phi}{ \oS \Phi }_{\M_\T} - \scalar{\oS^\dagger \Phi }{ \Phi }_{\M_\T}  
= 
 \scalar{\Phi}{\sigma_\oS(\n^\flat)\Phi }_{\partial\M_\T}   
$$
where  $\scalar{\cdot}{\cdot}_{\partial\M_\T}$ is the induced $L^2$-product on 
${\partial\M_\T}$. 
By adding $ 2 \scalar{\oS^\dagger \Phi }{  \Phi}_{\M_\T}$ and  using that $\oS$ 
is a 
symmetric positive system we thus obtain

\begin{equation}\label{passaggio1}
\begin{aligned}
 \scalar{\Phi}{  \sigma_\oS(\n)\Phi }_{\partial {\M_\T} } +  
{2} \scalar{\oS^\dagger\Phi}{ \Phi}_{\M_\T}  &=\scalar{\Phi}{ (\oS 
+\oS^\dagger 
)\Phi}_{\M_\T}  \geq    c\scalar{\Phi}{ \Phi}_{\M_\T}\,,
\end{aligned}
\end{equation}
for some $c>0$, where we used condition (P) in Definition~\ref{def:symm syst}.
Since $\Phi\in\oB^\dagger|_{\M_\T}$, by definition of adjoint boundary space we 
have  
 $$  \scalar{\Phi}{  \sigma_\oS(\n)\Phi }_{\partial {\M_\T} }\leq 0 $$
Therefore,  \eqref{passaggio1} reduces to 
$$c\scalar{\Phi}{ \Phi}_{\M_\T} \leq {2} \scalar{\Phi}{ 
\oS^\dagger\Phi}_{\M_\T}$$
and by the Cauchy-Schwarz inequality we obtain the desired inequality.

Let us now assume furthermore that the bilinear forms $\fiber{\sigma_\oS (\n_0) \cdot}{\cdot}_p$ and $\fiber{\sigma_\oS (\n_\T) \cdot}{\cdot}_p$ are positive definite on
$\E_p$ for every $p\in\Sigma_0$ resp. $\Sigma_\T$.
Using again the Green identity and arguing as above we can conclude that
\begin{equation*}
\begin{aligned}
 -\scalar{\Psi}{  \sigma_\oS(\n^\flat)\Psi }_{\partial {\M_\T} }  +
{2} \scalar{\oS\Psi}{ \Psi}_{\M_\T}  &= \scalar{\Psi}{ (\oS 
+\oS^\dagger 
)\Psi}_{\M_\T}  \geq    c\scalar{\Psi}{ \Psi}_{\M_\T}\,,
\end{aligned}
\end{equation*}
for some $c>0$, where we used condition (P) in Definition~\ref{def:symm syst}. This time the sesquilinear form $\scalar{\Psi}{  \sigma_\oS(\n)\Psi }_{\partial {\M_\T} }$ is positive definite, since $\Psi\in\oB|_{\M_\T}$ and $\Psi|_{\Sigma_{0}}=0$ by assumption. \black
Using again the Cauchy-Schwarz inequality, we obtain the desired inequality.
\end{proof}

\section{$\mathbf L^{\mathbf 2}$-well-posedness in a time strip}
The aim of this section is to prove the $L^2$-well-posedness of the Cauchy 
problem for a Friedrichs system of constant characteristic {in a time strip 
$\M_\T:=t^{-1}((0,T))$ for $0<T\in\RR$}.
We shall achieve our goal in three steps:
first, we shall prove the existence and uniqueness of weak solutions.
 Second, we shall prove that any weak solution can be approximated by a 
sequence of smooth sections by means of a localization procedure.
 Finally, we shall discuss the regularity of strong solutions.
 
To this end, let $\|\cdot\|_{L^2(\E_{|_{\M_\T}})}$ be the norm corresponding 
to the scalar 
product~\eqref{eq:L2} and denote $L^2$-completion of $\Gamma_c(\E_{|_{\M_\T}})$ 
by 
\[L^2(\E_{|_{\M_\T}}) := 
\overline{\big({\Gamma_c(\E_{|_{\M_\T})}},{\scalar{.}{.}}_{{\M_\T}}  
\big)}^{\scalar{.}{.}_{{\M_\T}}} \,.\]

\subsection{Weak solutions}\label{sec:weak}

\begin{definition}\label{def:Weak existence}
We call $\Psi\in L^2(\E_{|_{\M_\T}})$ a \emph{weak solution} to the Cauchy 
problem~\eqref{CauchyM_T0} if the relation
\begin{equation*} 
\scalar{\Phi}{\f}_{\M_\T}=\scalar{\oS^\dagger \Phi}{\Psi}_{\M_\T}
\end{equation*}
holds for all  $\Phi \in \Gamma_{c}(\E|_{\M_\T})$ satisfying 
$\Phi|_{\Sigma_{0}} = 0$, $\Phi|_{\Sigma_{T}} = 0$ and $\Phi|_\bM \in 
\oB^\dagger|_{\M_\T}$.
\end{definition}

\begin{theorem}[Weak existence] \label{thm:weak}
Let $\M$ be a globally hyperbolic 
manifold with timelike boundary and let 
$t\colon\M\to\RR$ be a Cauchy temporal function.
For any $0<T\in\RR$ 
denote 
with $\M_\T:=t^{-1}(0,T)$ a time strip.
Let finally $\oS$ be a Friedrichs 
system and denote with $G_\oB$ a future admissible boundary condition.
Assume $\M$ to be Cauchy-compact when $\oS$ is symmetric 
hyperbolic. 
Then there exists a  weak solution $\Psi\in L^2(\E_{|_{\M_\T}})$ to the Cauchy 
problem~\eqref{CauchyM_T0} restricted to $\M_\T$.
\end{theorem}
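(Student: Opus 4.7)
The plan is a classical duality argument: the adjoint energy inequality of Theorem~\ref{thm:energyest} allows the desired weak solution to be realised as the Riesz dual of a bounded linear functional built from $\f$, after an application of the Hahn--Banach theorem. Introduce the space of admissible adjoint test functions
\[V := \{\Phi \in \Gamma_c(\E|_{\M_\T}) : \Phi|_{\Sigma_{t_a}} = 0, \; \Phi|_{\Sigma_{t_b}} = 0, \; \Phi|_{\bM} \in \oB^\dagger\}.\]
Theorem~\ref{thm:energyest} yields $\|\Phi\|_{L^2} \le \tilde C \|\oS^\dagger \Phi\|_{L^2}$ for every $\Phi \in V$, so $\oS^\dagger : V \to L^2(\E|_{\M_\T})$ is injective and the linear functional $L : \oS^\dagger(V) \to \KK$ defined by $L(\oS^\dagger \Phi) := \scalar{\Phi}{\f}_{\M_\T}$ is well defined on the subspace $W := \oS^\dagger(V)$. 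By Cauchy--Schwarz and the adjoint estimate,
\[|L(\oS^\dagger \Phi)| \le \|\Phi\|_{L^2} \, \|\f\|_{L^2} \le \tilde C \|\f\|_{L^2} \, \|\oS^\dagger \Phi\|_{L^2},\]
so $L$ is continuous on $W$. Extending $L$ to all of $L^2(\E|_{\M_\T})$ via Hahn--Banach and representing it by Riesz gives $\Psi \in L^2(\E|_{\M_\T})$ with $\scalar{\oS^\dagger \Phi}{\Psi}_{\M_\T} = \scalar{\Phi}{\f}_{\M_\T}$ for every $\Phi \in V$, which is exactly the relation of Definition~\ref{def:Weak existence}.

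For uniqueness it suffices to show that $W$ is dense in $L^2(\E|_{\M_\T})$, i.e.\ that any $\chi \in L^2(\E|_{\M_\T})$ with $\scalar{\oS^\dagger \Phi}{\chi}_{\M_\T} = 0$ for all $\Phi \in V$ must vanish. This is the dual statement to the existence step, with the roles of $\oS$ and $\oS^\dagger$ (and of $\oB$ and $\oB^\dagger$) swapped. Since condition~(S) is self-dual and conditions~(H) and~(P) are symmetric in $\oS$ and $\oS^\dagger$, the adjoint $\oS^\dagger$ is again a Friedrichs system and $\oB$ is admissible for it, so Theorem~\ref{thm:energyest} also furnishes an estimate $\|\Psi\|_{L^2} \le \tilde C' \|\oS \Psi\|_{L^2}$ on the dual test class $\{\Psi \in \Gamma_c(\E|_{\M_\T}) : \Psi|_{\Sigma_{t_a}} = 0, \; \Psi|_{\Sigma_{t_b}} = 0, \; \Psi|_{\bM} \in \oB\}$. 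Repeating the Hahn--Banach/Riesz construction in this dual setting (with $\chi$ playing the role of the source) forces $\chi = 0$, yielding uniqueness.

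The principal obstacle will be the uniqueness step: the weak solution $\chi$ has no boundary trace a priori, so the energy inequality of Theorem~\ref{thm:energyest} cannot be applied to $\chi$ directly. Routing the argument through the symmetric (dual) version of that inequality, where the test functions are smooth and the boundary condition $\Psi|_{\bM} \in \oB$ is classical, circumvents this difficulty by transferring the vanishing statement from $\chi$ to smooth test sections. A secondary technical point to verify is that $\oB$ is indeed the adjoint of $\oB^\dagger$ with respect to $\oS^\dagger$, which follows from Definition~\ref{def:adjointbc} together with the identity $\sigma_{\oS^\dagger}(\n^\flat) = -\sigma_{\oS}(\n^\flat)$.
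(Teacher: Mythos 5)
Your existence argument is the same as the paper's: the adjoint energy estimate of Theorem~\ref{thm:energyest} makes $\oS^\dagger$ injective on the test class $V$, the functional $\ell(\oS^\dagger\Phi):=\scalar{\Phi}{\f}_{\M_\T}$ is then well defined and bounded on $W=\oS^\dagger(V)$, and a Riesz representative gives $\Psi$. (The paper extends $\ell$ only to the $L^2$-closure $\mathcal{H}$ of $W$ and applies Riesz there; your Hahn--Banach extension to all of $L^2(\E_{|_{\M_\T}})$ followed by Riesz is the same construction up to the choice of extension.)

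The gap is in your uniqueness step. You correctly observe that uniqueness in $L^2$ is equivalent to density of $W$ in $L^2(\E_{|_{\M_\T}})$, i.e.\ to the implication that $\chi\perp W$ forces $\chi=0$; but the dual Hahn--Banach/Riesz construction you propose does not prove this. Running the existence argument with $\chi$ as source (using a forward estimate $\|\Psi\|_{L^2}\leq \tilde C'\|\oS\Psi\|_{L^2}$ on the test class with boundary condition $\oB$) only produces some $\Xi\in L^2$ with $\scalar{\oS\Psi}{\Xi}_{\M_\T}=\scalar{\Psi}{\chi}_{\M_\T}$ for all smooth test sections $\Psi$, i.e.\ an $L^2$ weak solution of the adjoint problem with source $\chi$. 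Since $\Xi$ is merely $L^2$ and not an element of $V$, you cannot insert it into the relation $\scalar{\oS^\dagger\Phi}{\chi}_{\M_\T}=0$ to conclude $\|\chi\|^2=0$; doing so would require approximating in the graph norm of $\oS^\dagger$ by elements of $V$, which is precisely the density you are trying to establish. Density of the range and triviality of its orthogonal complement are the same statement, so duality alone cannot break the circle: the missing ingredient is the weak-equals-strong regularity result (the mollifier argument of Section~\ref{sec:strong sol}, based on \cite{Rauch,weak=strong}), after which $\chi$ is approximated by smooth sections obeying the boundary condition and the forward energy estimate kills it. For comparison, the paper itself only asserts uniqueness of the Riesz representative, i.e.\ uniqueness within $\mathcal{H}=\overline{W}$, and defers the regularity machinery to the strong-solution section. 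Two further sign points in your sketch: it is $-\oS^\dagger$, not $\oS^\dagger$, that satisfies condition (H), and the boundary space admissible for the adjoint operator is $\oB^\dagger$ (on which $\fiber{\sigma_{\oS^\dagger}(\n^\flat)\cdot}{\cdot}=-\fiber{\sigma_{\oS}(\n^\flat)\cdot}{\cdot}$ is nonnegative), not $\oB$; both are repaired by time reversal but should be stated correctly.
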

\begin{proof}
By Theorem~\ref{thm:energyest}, we get that
for every $\Phi\in \Gamma_c(\E_{|_{\M_\T}})$ satisfying  
$\Phi|_{\Sigma_{0}}=0$, $\Phi|_{\Sigma_{T}}=0$ and $\Phi|_{\bM}\in 
\oB^\dagger|_{\M_\T}$ it holds
\begin{align}
\label{Energy Inequality2ter}
\|\Phi\|_{L^2(\E_{|_{\M_\T}})} \leq \tilde C \| \oS^\dagger 
\Phi\|_{L^2(\E_{|_{\M_\T}})}  \,.
\end{align}
The latter inequality implies that the kernel of the operator $\oS^\dagger$ 
acting on $$\text{dom}\,  \oS^\dagger:= \{ \Phi\in \Gamma_c(\E|_{\M_\T})\ |\  
\Phi|_{\Sigma_{0}}=0, \, \Phi|_{\Sigma_{T}}=0 ,\, 
\Phi|_\bM\in\oB^\dagger|_{\M_\T}\}$$ is trivial.  Let now $\ell\colon 
\oS^\dagger(\text{dom}\,  \oS^\dagger)\to \CC$ be the linear functional defined 
by
$$\ell(\Theta)=\scalar{\Phi}{\f}_{{\M_\T}}$$
where $\Phi$ satisfies $\oS^\dagger \Phi=\Theta$. By the energy 
inequality~\eqref{Energy Inequality2ter}, $\ell$ is bounded:
\begin{align*}
\ell(\Theta)=&\scalar{ \Phi}{ \f}_{\M_\T} \leq \| \f\|_{L^2(\E_{|_{\M_\T}})} \, 
\| \Phi\|_{L^2(\E_{|_{\M_\T}})} \\ \leq& \tilde C \| \f\|_{L^2(\E_{|_{\M_\T}})} 
\|\oS^\dagger  \Phi\|_{L^2(\E_{|_{\M_\T}})} = \tilde C \|  \f\|_\T 
\|\Theta\|_{L^2(\E_{|_{\M_\T}})} ,
\end{align*}
where in the first inequality we used the Cauchy-Schwarz inequality. 
Then $\ell$ can be extended to a continuous functional defined on the 
$L^2$-completion of $\oS^\dagger(\text{dom} \, \oS^\dagger)$ denoted by  
$\mathcal{H}\subset L^2(\E|_{\M_\T})$. Finally, by Riesz's representation 
theorem, 
there exists an element $  \Psi\in L^2(\E|_{\M_\T})$ such that 
$$ \ell({\Theta}) =  \scalar{\Theta}{  \Psi}_{\M_\T} \,. $$ 
for all $\Theta\in \oS^\dagger(\text{dom}\oS^\dagger)$. Thus, we obtain
$$ \scalar{  \Phi}{  \f}_{ {\M_\T}}= \ell({\Theta})=\scalar{\Theta}{  
\Psi}_{\M_\T} =\scalar{\oS^\dagger   \Phi}{  \Psi}_{\M_\T}$$
for all $  \Phi\in \text{dom} \oS^\dagger$, which proves the existence of a 
weak 
solution $ {\Psi}$. 
\end{proof}

Note that weak solutions from Definition \ref{def:Weak existence} have no reason to be unique since the Hilbert space $\mathcal{H}$ from the proof of Theorem \ref{thm:weak} does not coincide in general with $L^2(\E|_{\M_\T})$.\footnote{We thank Alexander Strohmaier for pointing a mistake out in the first version of the paper.}

\subsection{Strong solutions}\label{sec:strong sol}

\begin{definition} 
We call $\Psi\in L^2(\E_{|_{\M_\T}})$ a \emph{strong solution} of the 
initial-boundary value problem~\eqref{CauchyM_T0} if there exists a sequence 
of 
sections $\Psi_k \in \Gamma(\E_{|_{\M_\T}})\cap L^2(\E_{|_{\M_\T}})$ such that 
$\Psi_k{}_{|_\bM}\in\oB|_{\M_\T}$ on $\partial{\M_\T}$ and
$$ \| \Psi_k - \Psi \|_{L^2(\E_{|_{\M_\T}})} \xrightarrow{k\to \infty} 0 \quad 
\text{ and } \quad \| \oS\Psi_k - \f \|_{L^2(\E_{|_{\M_\T}})} \xrightarrow{k\to 
\infty} 0.$$ 
 \end{definition}
In order to show that any weak solution is a strong solution, we begin by 
localizing the problem.
Hence, consider an open covering $\{\U_j\}_j$ of 
$\M_\T$ and let 
$\varphi_j$ be a smooth partition of unity subordinated to $\U_j$.

\begin{lemma}\label{lem:local}
A section $\Psi \in L^2(\E_{|_{\M_\T}})$ is a weak solution of the Cauchy 
problem~\eqref{CauchyM_T0} if and only if for any $j$,  $\Psi_j:= \varphi_j 
\Psi$ is a weak solution of 
\begin{equation}\label{CauchyLocal}
\begin{cases}
 \oS\Psi_j=\f_j:=\varphi_j\f+ 
\sigma_\oS(d\varphi_j)\Psi \\
 \Psi_j|_{\Sigma_{0}}=\h_j:=\varphi_j\h \\
 \Psi_j|_\bM\in\oB|_{\M_\T}.
 \end{cases}
\end{equation}
\end{lemma}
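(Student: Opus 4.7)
The plan is to reduce the equivalence to an algebraic manipulation based on the fact that $\oS$ is first-order. Concretely, for every smooth scalar function $\varphi$ one has the Leibniz-type commutator identity
$$[\oS,\varphi]\Psi=\oS(\varphi\Psi)-\varphi\oS\Psi=\sigma_\oS(d\varphi)\Psi,$$
and, since $\oS$ is symmetric, a parallel computation together with the general relation $\sigma_{\oS^\dagger}(\xi)=-\sigma_\oS(\xi)^\dagger=-\sigma_\oS(\xi)$ yields
$$[\oS^\dagger,\varphi]\Phi=-\sigma_\oS(d\varphi)\Phi.$$
A second elementary observation is that multiplying by the real scalar $\varphi_j$ preserves the class of admissible test sections for Definition~\ref{def:Weak existence}: if $\Phi\in\Gamma_c(\E_{|_{\M_\T}})$ vanishes on $\Sigma_{t_a}\cup\Sigma_{t_b}$ and $\Phi_{|_\bM}\in\oB^\dagger$, then $\varphi_j\Phi$ satisfies exactly the same properties because each fibre $\oB^\dagger_q$ is a linear subspace of $\E_q$.

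For the forward implication, I would assume $\Psi$ to be a weak solution of~\eqref{CauchyM_T0}, fix an arbitrary admissible test section $\Phi$, and compute
\begin{align*}
\scalar{\oS^\dagger\Phi}{\Psi_j}_{\M_\T} &=\scalar{\varphi_j\oS^\dagger\Phi}{\Psi}_{\M_\T} =\scalar{\oS^\dagger(\varphi_j\Phi)+\sigma_\oS(d\varphi_j)\Phi}{\Psi}_{\M_\T}\\
&=\scalar{\varphi_j\Phi}{\f}_{\M_\T}+\scalar{\Phi}{\sigma_\oS(d\varphi_j)\Psi}_{\M_\T} =\scalar{\Phi}{\f_j}_{\M_\T},
\end{align*}
where the first equality uses that $\varphi_j$ is real, the second invokes the commutator identity, the third applies the weak formulation of $\Psi$ with admissible test section $\varphi_j\Phi$, and the fourth exploits the Hermiticity of $\sigma_\oS(d\varphi_j)$. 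Since $\Psi_j|_{\Sigma_{t_0}}=\varphi_j\Psi|_{\Sigma_{t_0}}=\varphi_j\h=\h_j$ and the boundary condition $\Psi|_\bM\in\oB$ is preserved by pointwise multiplication by the scalar $\varphi_j$, we conclude that $\Psi_j$ weakly solves \eqref{CauchyLocal}.

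For the converse, assume each $\Psi_j=\varphi_j\Psi$ weakly solves \eqref{CauchyLocal} and let $\Phi$ be an admissible test section for \eqref{CauchyM_T0}. Since $\supp\Phi$ is compact and $\{\varphi_j\}_j$ is locally finite, only finitely many terms contribute to any of the sums that follow. Using $\sum_j\varphi_j\equiv 1$ and hence $\sum_j d\varphi_j\equiv 0$, I obtain
$$\scalar{\oS^\dagger\Phi}{\Psi}_{\M_\T}=\sum_j\scalar{\oS^\dagger\Phi}{\Psi_j}_{\M_\T}=\sum_j\scalar{\Phi}{\varphi_j\f+\sigma_\oS(d\varphi_j)\Psi}_{\M_\T}=\scalar{\Phi}{\f}_{\M_\T},$$
which is the weak formulation for $\Psi$. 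The argument is a standard partition-of-unity reduction for first-order systems; the only minor delicacy is tracking the sign in $[\oS^\dagger,\varphi_j]$, which is controlled by the symmetry hypothesis on $\oS$, and verifying that scalar multiplication preserves admissibility of test sections, which is immediate from the linearity of $\oB$ and $\oB^\dagger$ in each fibre.
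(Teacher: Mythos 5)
Your proposal is correct and follows essentially the same route as the paper: the forward direction via the commutator identity $\varphi_j\oS^\dagger\Phi=\oS^\dagger(\varphi_j\Phi)+\sigma_\oS(d\varphi_j)\Phi$ applied to the test section $\varphi_j\Phi$, and the converse by summing over $j$ and using $\sum_j d\varphi_j=0$. Your explicit checks that $\varphi_j\Phi$ remains an admissible test section and that the sign of $[\oS^\dagger,\varphi_j]$ is controlled by the symmetry of $\oS$ are details the paper leaves implicit, but the argument is the same.
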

\begin{proof}
To verify our claim, suppose  $\Psi$ satisfies $\oS \Psi = \f$ in a weak sense, 
\ie for any $\Phi\in\Gamma_c(\E_{|_{\M_\T}})$ satisfying 
$\Phi_{|_{\bM}}\in\oB^\dagger|_{\M_\T}$, and $\Phi_{|_{\Sigma_0}}=0$, it holds 
$ 
\scalar{\oS^\dagger 
\Phi}{\Psi}_\T=\scalar{\Phi}{\f}_\T\,.$
Using $\scalar{\Phi}{\varphi_j \Psi}_{\M_\T}=\scalar{\varphi_j 
\Phi}{\Psi}_{\M_\T}$ and then Leibniz rule, it follows that
\begin{align*}
\scalar{\oS^\dagger \Phi}{\Psi_j}_{\M_\T} &=  \scalar{\varphi_j\oS^\dagger 
\Phi}{\Psi}_{\M_\T} = \scalar{\oS^\dagger (\varphi_j\Phi)}{\Psi}_{\M_\T} - 
\scalar{(\oS^\dagger \varphi_j)\Phi}{\Psi}_{\M_\T} =\\
&=\scalar{\varphi_j \Phi}{\f}_{\M_\T} + \scalar{
\sigma_\oS(d\varphi_j)\Phi}{\Psi}_{\M_\T}=\scalar{ \Phi}{ \varphi_j \f + 
\sigma_\oS(d\varphi_j) \Psi }_{\M_\T} 
\end{align*}
This shows that $\Psi_j$ is a weak solution of the Cauchy 
problem~\ref{CauchyLocal}. Conversely, suppose that $\Psi_j$ is a weak solution 
of the Cauchy problem~\eqref{CauchyLocal}. Then by summing over $j$ and using 
$\sum_j d\varphi_j=0$, we find that a weak solution $\Psi=\sum_j\Psi_j$ is a 
weak solution of $\oS\Psi=\f$. 
\end{proof}

\begin{definition}
Let $\U\subset\M_\T$ be a compact subset in $\M_\T$.
We say that $\Psi\in L^2(\E_{|_\U})$ is a \emph{locally strong solution} of the 
Cauchy problem~\eqref{CauchyM_T0} if there exists a sequence of sections 
$\Psi_k 
\in \Gamma(\E_{|_\U})$ such that $\Psi_k{}_{|_\bM}\in\oB|_{\M_\T}$ on 
$\partial{\M_\T} 
\cap 
\U$ and  
$$ \| \Psi_k - \Psi \|_{L^2(\E_{|_\U})} \xrightarrow{k\to \infty} 0 \quad 
\text{ 
and } \quad \| \oS\Psi_k - \f \|_{L^2(\E_{|_\U})} \xrightarrow{k\to 
\infty} 0.$$ 
 \end{definition}

The strategy for the existence of strong solutions is the following.
First, it suffices to prove the existence of such solutions on compact subdomains of $\M_\T$; then considering an exhaustion of $\M_\T$ by compact subsets $(K_p)_{p\geq0}$ and associated cutoff functions $\chi_p\in C_c^\infty(\M_\T,[0,1])$ with $\chi_p{}_{|_{K_p}}=1$, the fact that each $\chi_p\Psi$ is a strong solution associated to $\chi_p\f$ on $K_p$ implies at the limit as $p\to\infty$ that $\Psi$ is a strong solution associated to $\f$ on $\M_\T$.
Second, each compact set can be covered by finitely many arbitrarily small open sets, therefore the existence of strong solutions can be localized to small coordinate neighbourhoods, which is what we do next.\\

We concentrate on points in the boundary $p\in  \partial \M\cap \M_\T$ (the
other points will even be easier to handle since we do not have to care about 
boundaries) 
and firstly define a convenient chart as follows, compare also 
Figure~\ref{fig:chart}:
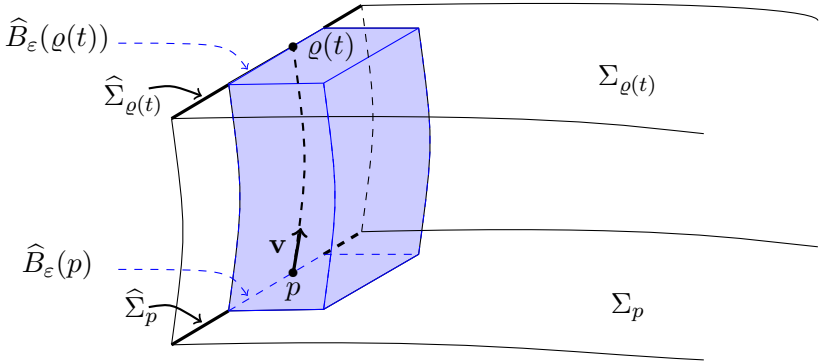
\begin{figure}[h!]
\begin{tikzpicture}

\filldraw[fill=blue!20!white] (0.75,0.45) --(2,0.47) -- (3.25,1.2) -- 
(3.25,1.2) 
.. controls (3.45,2.7) .. (3.25,4.2) -- (3.25,4.2) --(2,4.2) -- (0.75,3.47) .. 
controls (0.95,1.95) .. (0.75,0.45) ;

 \draw (0,0) .. controls (4,0.1) and  (5,-0.1) .. (7,-0.2) ; 
 \draw (2.5,1.5) .. controls  (6.5,1.6) and  (7.5,1.4) .. (8.5,1.3) ; 
\draw[very thick] (0,0) -- (0.75,0.45);
\draw[dashed, very thick] (2,1.2) -- (2.5,1.5);
\draw[blue,dashed ] (0.75,0.45) -- (2,1.2) ;
\draw[blue, ] (0.75,0.45) -- (2,0.47) ;
\draw[blue, dashed] (2,1.2) -- (3.25,1.2) ;
\draw[blue, ] (2,0.47) -- (3.25,1.2) ;

 \draw (0,3) .. controls  (5,3.1) and  (6,2.9)  .. (7,2.8) ;  
\draw (2.5,4.5)..controls (7.5,4.6) and (8.5,4.4) .. (8.5,4.3) ;  
\draw[very thick] (0,3) -- (0.75,3.45);
\draw[very thick] (2,4.2) -- (2.5,4.5);
\draw[blue,] (0.75,3.45) -- (2,4.2) ;
\draw[blue, ] (0.75,3.45) -- (2,3.47) ;
\draw[blue, ] (2,4.2) -- (3.25,4.2) ;
\draw[blue, ] (2,3.47) -- (3.25,4.2) ;

 \draw  (0,0) .. controls (0.2,1.5) .. (0,3); 
\draw[dashed]   (2.5,1.5) .. controls (2.7,3) .. (2.5,4.5); 

\draw[dashed,blue,]  (0.75,0.45) .. controls (0.95,1.95) .. (0.75,3.47); 
\draw[dashed,blue,]  (2,0.47) .. controls (2.2,1.97) .. (2,3.47); 
\draw[blue,]  (2,0.47) .. controls (2.2,1.97) .. (2,3.47); 
\draw[dashed,blue,]  (3.25,1.2) .. controls (3.45,2.7) .. (3.25,4.2); 

            \node  at (1.6,0.95) {$\mathsmaller{{\bullet}}$};

            \node  at (1.6,0.70) {$p$};       
            \node  at (1.6,3.95) {$\mathsmaller{{\bullet}}$};

            \node  at (2.1,3.95) {$\varrho(t)$};

\draw[dashed,black, thick]  (1.6,0.95)  .. controls (1.8,2.45) .. (1.6,3.95); 
\draw[black, very thick,->]  (1.6,0.95)  -- (1.7,1.55); 
            \node  at (1.4,1.3) {$\bf v$};       

            \node  at (6,0.5) {$\Sigma_p$};       
            \node  at (6,3.5) {$\Sigma_{\varrho(t)}$};

    \draw[dashed,blue, ,->]  (-0.7,1) .. controls (0.75,1) .. (1,0.65) ; 
     \node  at (-1.5,1.1) {$\widehat{B}_\epsilon(p)$};
   
    \draw[dashed,blue, ,->]  (-0.7,4) .. controls (0.75,4) .. (1,3.65) ; 
     \node  at (-1.5,4.1) {$\widehat{B}_\epsilon(\varrho(t))$};
                   
            \node  at (-0.4,0.5) {$\widehat{\Sigma}_p$};       
    \draw[black,thick ,->]  (-0.3,0.5)  .. controls (0,0.55) .. (0.4,0.3) ; 

            \node  at (-0.5,3.3) {$\widehat{\Sigma}_{\varrho(t)}$};       
   \draw[black, thick,->]  (-0.3,3.4)  .. controls (0,3.45) .. (0.4,3.3) ; 

    \end{tikzpicture}
    \caption{Fermi coordinates on each Cauchy surface.}
    \label{fig:chart}
\end{figure} 
 Let ${\Sigma}_p$ be the Cauchy surface of ${\M_\T}$ to which $p$ belongs to. 
For $q\in \partial {\M}\cap\M_\T$ let $\widehat{\Sigma}_q := \Sigma_q\cap 
\partial \M \cap \M_\T$ be the corresponding Cauchy surface in the boundary. 
 Let $\varrho\colon [0,T]\to \partial \M\cap\M_\T$ be the timelike 
geodesic in $\partial \M\cap\M_\T$ starting at $p$ with velocity ${\bf v} \in 
\T_p\bM$ where ${\bf v}$ is a normalized, future-directed, timelike vector 
perpendicular to $\widehat{\Sigma}_p$ in $\partial \M \cap\M_\T$. 
Let 
$\widehat{B}_\epsilon (\varrho(t))$ be the $\epsilon$-ball in 
$\partial\hat{\Sigma}_{\varrho(t)}$ around $\varrho(t)$. On these balls we 
choose 
geodesic normal coordinates $\widehat{\kappa}_t\colon 
B_\epsilon^{n-1}(0)\subset 
\mathbb R^{n-1}\to \widehat{B}_\epsilon (\varrho(t))$. Moreover, inside each 
$\Sigma_{\varrho(t)}$ we choose Fermi coordinates with base 
$\widehat{B}_\epsilon (\varrho(t))$.
  Thus, we obtain a chart in $\Sigma_{\varrho(t)}$ around $\varrho(t)$ as 
\label{def:U_p}  
\begin{eqnarray*}
\widetilde\kappa_t\colon B_\epsilon^{n-1}(0)\times [0,\epsilon]\subset \mathbb 
R^{n}&\to& U_{\epsilon} (\widehat{B}_\epsilon (\varrho(t)))
\\
 (y,z) & \mapsto& \exp^{\perp, \Sigma_{\varrho(t)}}_{\widehat{\kappa}_t(y)}(z)
\end{eqnarray*}
where $U_{\epsilon} (\widehat{B}_\epsilon (\varrho(t))):= \{ q\in 
\Sigma_{\varrho(t)}\ |\ \mathrm{dist}_{\Sigma_{\varrho(t)}}(q, 
\widehat{B}_\epsilon (\varrho(t))\leq \epsilon\}$, $\exp^{\perp, \Sigma_{\varrho(t)}}_{\widehat{\kappa}_t(y)}(z)$ is the 
normal exponential map in $\Sigma_{\varrho(t)}$ starting at 
$\widehat{\kappa}_t(y)$ with velocity perpendicular to 
$\widehat{\Sigma}_{\varrho(t)}=\partial \Sigma_{\varrho(t)}$ pointing in the 
interior and with magnitude $z$. Putting all this together we obtain a 
chart

$$  \begin{aligned}
   \kappa_{p} \colon [0,T] \times B_\epsilon^{n-1}(0)\times [0,\epsilon] 
\subset \mathbb R^{n+1}&\to \U_p:= \bigcup_{t\in [0,\epsilon]} U_\epsilon 
(\widehat{B}_\epsilon (\varrho(t))) \subset \M_\T \\
   (t,y,\bar{z})&\mapsto \widetilde\kappa_t(y,\bar{z}).
  \end{aligned}
$$

For us here, the only purpose of those charts is to specify coordinates such 
that near the point $p$ the Cauchy problem is close enough to the 
Minkowski standard form.   

Next we prove that weak solutions are strong solutions.
     
\begin{proposition}\label{prop:weak-strong}
Let $\M$ be a globally hyperbolic 
manifold with timelike boundary and let 
$t\colon\M\to\RR$ be a Cauchy temporal function.
For any $0<T\in\RR$ 
denote 
with $\M_\T:=t^{-1}(0,T)$ a time strip.
Let finally $\oS$ be a Friedrichs 
system and denote with $G_\oB$ a future admissible boundary condition.
Assume $\M$ to be Cauchy-compact when $\oS$ is symmetric 
hyperbolic. 
Then any  weak solution $\Psi\in L^2(\E_{|_{\M_\T}})$ to the Cauchy  problem~\eqref{CauchyM_T0} restricted to $\M_\T$ is a strong solution.
\end{proposition}
\begin{proof}
On account 
of the above discussion, it is enough to check that any weak solution in the coordinate neighbooorhood chosen above is a strong solution.
For every such coordinate neighbourhood $\kappa_p$, a symmetric positive system $\oS$
has the form
$$  \oS= \sigma_\oS(d  t) \partial_{ t} + \sum_{j=1}^{n-1} 
\sigma_\oS(dx^j)\partial_{x^j} + \sigma_\oS(d\overline{z}) 
\partial_{\overline{z}} + C( t, y, \overline z)
 $$   
for some zero-order operator $C$.
Therefore, in case the coordinate patch meets $\bM_\T$, \cite[Theorem 8]{Rauch} ensures that
any weak solution is a strong solution.
In case the coordinate patch does not meet $\bM_\T$, then using
the classical results of Friedrichs~\cite{weak=strong} we can conclude.
Recall that, by Proposition 
\ref{Cauchycompactornotisthesameforsymmhyp}, if $\oS$ is a symmetric hyperbolic 
system, then $\M$ may be assumed to be Cauchy-compact. 
\end{proof}

We are finally in the position to prove our first main result.

\begin{proof}[Proof of Theorem~\ref{thm:main1}.]
To prove our claim it remains to show that,  when  $\scalar{\Psi}{  \sigma_\oS(\n)\Psi }_{\partial {\M_\T} }$ is positive definite, then any strong solutions are unique.
But this is a direct consequence of the energy estimate (see Theorem~\ref{Energy Inequality2}). Indeed, suppose that there exists two strong solutions $\Psi_1$ and $\Psi_2$.
Then their difference $\Psi:=\Psi_1-\Psi_2$ solves the homogeneous Cauchy problem, i.e. $\f=0$ and $\Psi |_{\Sigma_0}=0$. This implies that
$$\|\Psi\|_{L^2(\E|_{\M_\T})} \leq \tilde{D} \|\oS\Psi\|_{L^2(\E|_{\M_\T})} = \tilde{D}\|f\|_{L^2(\E|_{\M_\T})} =0\,.$$
This ends the proof of Theorem~\ref{thm:main1}.\black
\end{proof}

\subsection{Differentiability of the solutions for symmetric hyperbolic 
systems}\label{sec:regularity}
It is well-known that the Cauchy problem for the backward heat 
equation is not well-posed.
This is because an initial data for the backward 
heat equation is a final condition for the forward heat equation.
The latter 
equation has a smoothening effect on the initial data, \ie the solution is 
smooth even if the initial data is only continuous.
It is easy to understand 
that, there exists a class of smooth initial data for the  backward heat 
equation generating non-smooth solutions.
Since the heat equation can be 
reduced to a symmetric positive system (\cf 
Section~\ref{sec:reactiondiffusion}), we cannot expect the existence of smooth 
solutions for a generic symmetric positive system.
Hence, in this section we 
shall only focus 
 only on the subclass of symmetric hyperbolic systems.
In particular, we shall see that, if the Cauchy data $(\f,\h)$ are 
smooth and a compatibility condition is imposed, then the strong solution is 
actually smooth. To this end, let
 $t\colon\M\to\RR$ be a Cauchy temporal function with 
gradient tangent to the boundary, as in Theorem~\ref{thm: Sanchez}, and write 
the symmetric hyperbolic system $\oS$ as
$$\oS=\sigma_{\oS}(dt)\nabla_{t} - \H \,$$
where $\H$ is a first-order linear differential operator which differentiates 
only in the directions that are tangent to $\Sigma$ and where $\nabla$ 
is 
 any fixed metric connection for
$\fiber{\cdot}{\cdot}$. 
Finally  let $G_{\oB_+},G_{\oB_-}\colon 
\E_{|_{\bM}}\longrightarrow\E_{|_{\bM}}$ be future and past
admissible boundary conditions for $\oS$, in 
particular $\oB_\pm:=\ker(G_{\oB_\pm})$ defines the future resp. past
admissible boundary space for $\oS$ along $\bM$.
The compatibility conditions of order $k\geq0$ for 
$\h\in\Gamma(\E_{|_{\Sigma_{0}}})$ and  
$\f\in\Gamma(\E)$ read
\begin{equation}\label{eq:comp cond data}
 \sum_{j=0}^k 
\frac{(k)!}{j! (k-j)!}
\Big( \nabla_t^j  G_{\oB_+} \Big)\Big|_{\partial\Sigma_0} \h_{k-j} =0,
\end{equation}
and 
\begin{equation}\label{eq:comp cond databis}
 \sum_{j=0}^k 
\frac{(k)!}{j! (k-j)!}
\Big( \nabla_t^j  G_{\oB_-} \Big)\Big|_{\partial\Sigma_0} \h_{k-j} =0,
\end{equation}
\color{black}
where the sequence $(\h_k)_k$ of sections of $E_{|_{\partial\Sigma_0}}$ is 
defined inductively by $\h_0:=\h$ and
$$ \h_k:= 
\sum_{j=0}^{k-1} 
\frac{(k-1)!}{j! (k-1-j)!}
 \H_j{}_{|_{\partial\Sigma_0}} \, \h_{k-1-j} + 
\nabla_t^{k-1} \big(\sigma_{\oS}^{-1}(dt)\f)_{|_{\partial\Sigma_0}} \qquad 
\text{for all }k\geq1,
$$  
where $\H_j:=[\nabla_t,\H_{j-1}]$ and $\H_0:=\sigma_{\oS}(dt)^{-1}\H$.
\begin{notation}\label{not:comp cond data}
We denote the space of data which satisfy the compatibility conditions as 

$$\overline{\Gamma(\E_{|_{\M_\T}})\times{\Gamma(\E_{|_{\Sigma_{0}}})}}:= \{ 
(\f,\h) \in \Gamma(\E_{|_{\M_\T}})\times{\Gamma(\E_{|_{\Sigma_{0}}}}) \;| \; 
\textrm{ \eqref{eq:comp cond data} 
and~\eqref{eq:comp cond databis} hold}\;\}\,.  
$$ 
\end{notation}

The compatibility conditions~\eqref{eq:comp cond data} and \eqref{eq:comp cond 
databis} up to order $k$ must be 
fulfilled in order for the solution of the Cauchy-problem, if it exists, to be 
$C^k$. 
Those conditions are sufficient for nowhere characteristic  symmetric 
hyperbolic  systems \cite{Rauch-Massey}.
However, if the symmetric hyperbolic
system is of nonvanishing constant characteristic, full regularity of the 
solution cannot be expected in general, see e.g. \cite{Tsuji}.
In that case, as 
shown 
in \cite{Rauch}, there exists a good notion of tangential regularity.
Given the Cauchy hypersurface $\Sigma_0\subset\M$ where the initial condition 
is fixed, we say 
that a vector field $X\in\Gamma(\T\Sigma_0)$ is tangential to 
$\partial\Sigma_0$ if and only if 
$X_{|_{\partial\Sigma_0}}\in\Gamma(\T\partial\Sigma_0)$, i.e. $g(X,\n)_q=0$ 
for every $q\in\partial\Sigma_0$.
We denote the space of tangential vector 
fields as 
$$\mathfrak{X}_{\rm tan}(\Sigma_0):=\{ X\in\Gamma(\T\Sigma_0) \,|\, g(X,\n)_q=0 
\text{ for 
all $q\in\partial\Sigma_0$} \}\,.$$
As in \cite{SecchiIBVP96}, we define -- at least in the case where 
$\M$ is Cauchy-compact, otherwise a metric along $\Sigma_0$ has to be fixed 
-- for any $m\geq0$ the anisotropic Sobolev space $H_*^m(\E_{|_{\Sigma_0}})$ 
as 
\begin{eqnarray*}
H_*^m(\E_{|_{\Sigma_0}})&:=&\Big\{\phi\in\L^2(\E_{|_{\Sigma_0}}),\,\nabla_{X_1}
\cdots\nabla_{X_h}\nabla_{X_1'}\cdots\nabla_{X_k'}\phi\in\L^2(\E_{|_{\Sigma_0}}
)\\
&&\phantom{\Big\{}\forall\,X_1,\ldots,X_h,X_1',\ldots,X_k'\Big\},
\end{eqnarray*}
where $X_1,\ldots,X_h,X_1',\ldots,X_k'$ are smooth tangent vector fields on 
$\Sigma_0$ with $X_1,\ldots,X_h\in\mathfrak{X}_{\rm 
tan}(\Sigma_0)$, $X_1',\ldots,X_k'\notin\mathfrak{X}_{\rm 
tan}(\Sigma_0)$ as well as $h+2k\leq m$.
The space $H_*^m(\E_{|_{\Sigma_0}})$ can be endowed with a Hilbert-space 
structure, see \cite[p. 673]{SecchiIBVP96}.
It is easy to see that $H_*^m(\E_{|_{\Sigma_0}})\subset 
H^{[\frac{m}{2}]}(\E_{|_{\Sigma_0}})$, in particular $H_*^m(\E_{|_{\Sigma_0}})$ 
embeds continuously into the space $\Gamma^p(\E_{|_{\Sigma_0}})$ of $C^p$ 
sections of $\E_{|_{\Sigma_0}}$ as soon as $m>\frac{n}{2}+p$ by the Sobolev 
embedding theorem for compact manifolds with $C^1$ boundary, see e.g. 
\cite[Ch. V]{Adamsbook}.
For the sake of completeness, we recall part of Secchi's main result 
\cite[Theorem 2.1]{SecchiIBVP96} in our context: Fix any integers
$m\geq2[\frac{n}{2}]+6$ and $1\leq s\leq m$.
Assume $\Sigma_0$ to be compact 
with smooth boundary and of nonzero and nonmaximal constant 
characteristic w.r.t. a symmetric hyperbolic system $\oS$ on $\M$.
Let $\G_\oB$ to 
be a future admissible boundary condition for $\oS$ along $\bM$.
Given $\f\in\bigcap_{j=0}^s H^j([0,\T],H_*^{s-j}(\Sigma_0))$ and $\h\in 
H_*^s(\Omega)$, assume that the compatibility conditions \eqref{eq:comp cond 
data} are satisfied up to order $s-1$ and that $\h_j\in H_*^{s-j}(\Sigma_0)$ 
for all $j=0,\ldots,s-1$.
Then there exists a unique $\Psi\in\bigcap_{j=0}^s 
\Gamma^j([0,\T],H_*^{s-j}(\Sigma_0))$ solution of the initial boundary value 
problem $\oS\Psi=\f$ on $\M_\T$, $\Psi_{|_{\Sigma_0}}=\h$ and 
$\Psi_{|_\bM}\in\oB|_{\M_\T}$.
Note in particular that, if $\f,\h$ are smooth on $\M$, then so must be $\Psi$ 
because it lies in $\Gamma^j\left([0,T],\Gamma^p(\E_{|_{\Sigma_0}})\right)$ for 
any $j,p$.

\begin{theorem}\label{thm:tangential regularity}
Let $\M$ be a globally hyperbolic manifold with timelike boundary and let $\oS$ 
be a 
symmetric hyperbolic system of constant characteristic.
 If the data 
$(\f,\h)$ are smooth and satisfy the compatibility 
conditions~\eqref{eq:comp cond data} and \eqref{eq:comp cond databis}, then  
the strong solution $\Psi$ of the Cauchy problem~\eqref{CauchyM_T0} lies in 
$\Gamma(\E_{|_{\M_\T}})$.
 \end{theorem}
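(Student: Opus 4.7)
The plan is to combine the classical interior regularity theory for symmetric hyperbolic systems with the anisotropic Sobolev regularity result of Secchi cited just above the statement, after a localization argument modeled on the proof of Theorem~\ref{thm:main1}. First, observe that under the assumption that $\fiber{\sigma_\oS(dt)\cdot}{\cdot}_p$ is positive definite, the covector $dt$ is noncharacteristic, so $\oS$ behaves as an evolution operator in $t$, and Theorem~\ref{thm:main1} supplies a unique strong solution $\Psi\in L^2(\E_{|_{\M_\T}})$. The task is to upgrade its regularity to smoothness.

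I would proceed by localization via an open cover $\{\U_j\}$ of $\M_\T$ adapted to the chart $\kappa_p$ introduced on page~\pageref{def:U_p}, together with a subordinate partition of unity $\{\varphi_j\}$. By Lemma~\ref{lem:local}, $\Psi_j:=\varphi_j\Psi$ is a strong (weak) solution of the Cauchy problem with data $(\f_j,\h_j)=(\varphi_j\f+\sigma_\oS(d\varphi_j)\Psi,\varphi_j\h)$, to which the same admissible boundary condition still applies since $\G_\oB$ is linear. For those charts $\U_j$ with $\U_j\cap\bM=\emptyset$, the classical Friedrichs regularity theorem for symmetric hyperbolic systems (see e.g.~\cite{weak=strong} and Rauch--Massey~\cite{Rauch-Massey}) ensures that smooth data produce smooth solutions; no compatibility at $\bM$ is needed there. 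For charts with $\U_j\cap\bM\neq\emptyset$, in the coordinates $\kappa_p$ the operator takes the Minkowski-like form used just before the proof of Theorem~\ref{thm:main1}, and one invokes Secchi's \cite[Theorem 2.1]{SecchiIBVP96} in each anisotropic scale $H_*^s$: since $(\f,\h)$ are smooth and all compatibility conditions \eqref{eq:comp cond data} hold by hypothesis, one obtains $\Psi_j\in\bigcap_{s\geq 0}\bigcap_{j\leq s}\Gamma^j([t_a,t_b],H_*^{s-j}(\E_{|_{\Sigma_0}}))$. The Sobolev embedding $H_*^m\hookrightarrow\Gamma^p$ for $m>\frac{n}{2}+p$ then gives $\Psi_j\in\Gamma^p$ for every $p$, so $\Psi_j\in\Gamma(\E_{|_{\U_j}})$.

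Summing $\Psi=\sum_j\Psi_j$ yields the claimed smoothness on $\M_\T$. A small bootstrap is needed because $\f_j$ itself contains the term $\sigma_\oS(d\varphi_j)\Psi$, so Secchi's theorem is first applied with $\Psi$ known only in some $H_*^s$; this upgrades $\Psi_j$ to the next anisotropic space, and iterating the argument in $s$ propagates smoothness to all orders. Finally one invokes Proposition~\ref{Cauchycompactornotisthesameforsymmhyp} to reduce, if necessary, to the Cauchy-compact case needed for Secchi's framework.

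The main obstacle I expect is verifying that the localized data $(\f_j,\h_j)$ still satisfy the compatibility conditions of every order along $\bM\cap\partial\Sigma_{t_0}$ so that Secchi's theorem applies: this requires commuting the cutoff $\varphi_j$ with $\G_\oB$ and with the iterated brackets $\H_j=[\nabla_t,\H_{j-1}]$ appearing in \eqref{eq:comp cond data}, which is possible because $\varphi_j$ is scalar-valued (so it commutes with $G_\oB$ pointwise) and smooth; checking that the resulting extra commutator terms lie in the correct $H_*^{s-j}$ space is the technically delicate point. Once this is established, the regularity result of Secchi together with the classical interior theory closes the argument.
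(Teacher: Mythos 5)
Your overall strategy --- classical interior regularity away from $\bM$ plus Secchi's anisotropic regularity near $\bM$ --- matches the paper's, but your localization is genuinely different and it is where the argument breaks down. The paper does not use a partition of unity: it localizes by \emph{restriction}, covering a neighbourhood of a boundary point $p$ by a causal tube $\tilde{U}_p=\bigcup_{t}U_{\varrho(t)}$ built from the Fermi charts along a timelike curve $\varrho$ in $\bM$ running from $\Sigma_0$ to $p$, and applies Secchi's theorem to the \emph{original} problem on that tube, whose data are still the original smooth $(\f,\h)$ satisfying \eqref{eq:comp cond data} to every order. Your reduction via Lemma~\ref{lem:local} instead replaces the source by $\f_j=\varphi_j\f+\sigma_\oS(d\varphi_j)\Psi$, which contains the unknown solution. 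This is fatal for a regularity argument: hyperbolic initial-boundary value theory, unlike elliptic theory, does not gain derivatives. As recalled in the paper, Secchi's Theorem~2.1 takes data at the anisotropic level $s$ (with $1\leq s\leq m$, $m\geq 2[\frac{n}{2}]+6$, and compatibility up to order $s-1$) and returns a solution at exactly the same level $s$. Starting from $\Psi\in L^2$ you only know $\f_j\in L^2$, so the theorem cannot be invoked at any positive level; and even if it could, applying it at level $s$ returns level $s$, not $s+1$, so the proposed bootstrap has no mechanism for producing the extra derivative. The compatibility conditions for $(\f_j,\h_j)$ suffer from the same circularity, since the recursion defining $\h_k$ involves $\nabla_t^{k-1}\bigl(\sigma_\oS(dt)^{-1}\f_j\bigr)|_{\partial\Sigma_0}$ and hence boundary traces of derivatives of $\Psi$ that are not yet known to exist.

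The repair is to drop the cutoffs entirely: by finite propagation speed (Proposition~\ref{prop:finite}) the solution on the tube $\tilde{U}_p$ is determined by the data there, so Secchi's theorem applies directly to $\oS\Psi=\f$ on $\tilde{U}_p$ at every order $s$, giving $\Psi\in\bigcap_{j\leq s}\Gamma^j([0,\T],H_*^{s-j})$ for all $s$ and hence smoothness via the embedding $H_*^m\hookrightarrow\Gamma^p$ for $m>\frac{n}{2}+p$; interior points are handled with geodesic normal coordinates and the classical Friedrichs/Rauch--Massey theory, exactly as you indicate. That is the paper's proof.
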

\begin{proof}
First let $p\in \partial \M\cap \Sigma_{\T}$ for some 
 and let 
$\varrho\colon [0,\T]\to \partial \M$ be a 
timelike curve
with $\varrho(0)\in \Sigma_{0}$ and $p=\varrho(\T)$. 
We fix $\epsilon>0$ such that 
we have Fermi coordinates on a ``cube'' $U_{\varrho(t)}$ around $\varrho(t)$
as in Section~\ref{sec:strong sol} for all $t\in [0,\T]$. This is always 
possible since  the image of
$\varrho$ is compact and everything depends smoothly on the base point.
For $\displaystyle{\tilde{U}_p:=\bigcup_{t\in[0,\T]}U_{\varrho(t)}}$ we know 
that the compatibility conditions~
\eqref{eq:comp cond data} and \eqref{eq:comp cond databis} are fulfilled along 
$\Sigma_0$ by assumption.
Thus, 
\cite[Theorem 2.1]{SecchiIBVP96} tells us that the strong solution $\Psi$ lies 
in 
$\Gamma(\E_{|_{\tilde{U}_p}})$.
If $\oS$ is nowhere characteristic, $\Psi$ is actually smooth on account of 
\cite[Theorem 3.1]{Rauch-Massey}.
For $p\in \M\setminus \partial \M$ we choose a timelike curve $\varrho
\colon [0,\T]\to \M\setminus \partial \M$  with $\varrho(0)\in 
\Sigma_{0}$ and $p=\varrho(\T)$ and proceed as before.
It is even easier since we can just use geodesic normal coordinates in the 
Cauchy hypersurfaces around 
each $\rho(t)$.
\end{proof}

\section{Global well-posedness}\label{sec:global well-posedness}

Up to now we have obtained a strong solution in any time strip $\M_\T$  in 
Theorem~\ref{thm:main1} and showed that, if the  Cauchy data $(\f,\h)$ fulfill
the compatibility condition~\eqref{eq:comp cond data}, then the solution is 
actually smooth (\cf 
Theorem~\ref{thm:tangential regularity}). 
We can now easily put everything together to  obtain global well-posedness of 
the Cauchy problem for a symmetric hyperbolic system of constant 
characteristic. 
\begin{proof}[Proof of Theorem~\ref{thm:main2}]
Fix $\h\in\Gamma_{c}(\E_{|_{\Sigma_0}})$. 
On account of Theorem~\ref{thm:weak}, for any $T\in [0,\infty)$ there exists a 
weak solution $\Psi_T$ to the Cauchy problem~\eqref{CauchyM_T0} in the time 
strip $\M_T:= t^{-1}([0,T])$. By Theorem~\ref{thm:tangential regularity}, we 
get 
in particular that $\Psi_T$ is smooth in the time strip $\M_T$. By uniqueness 
of 
the solution, we get 
$\Psi_{T_1}|_{t^{-1}[0,T_1]}=\Psi_{T_2}|_{t^{-1}[0,T_1]}$ for all $T_1\leq 
T_2\in [0,\infty)$. 
{By combining a similar argument for negative time with 
Lemma~\ref{l:timereversalSsymmhyp}, we get existence of solutions for negative 
times.} \\
Finally, the stability of the Cauchy problem follows by~\cite[Section 5]{Ba}, 
the fact that we have a boundary condition playing no role in the 
proof.
\end{proof}

A byproduct of the well-posedness of the Cauchy problem is the existence of 
Green operators:
 
\begin{proposition}\label{prop:Green}
A symmetric hyperbolic system of constant characteristic on a globally 
hyperbolic manifold with timelike boundary coupled with an admissible boundary 
condition is Green-hyperbolic, i.e., 
there exist linear maps, called 
\textit{advanced/retarded Green operator} respectively,
$\G^\pm\colon \Gamma_{c}(\E)  \to \Gamma_{sc,\oB_\pm}(\E)$ satisfying
\begin{enumerate}[(i)]
\item
$\oS\circ \G^\pm \f=\f$ for all $ \f \in 
\Gamma_{c}(\E)$ and $\G^\pm\circ\oS\f=\f$ for all $ \f \in 
\Gamma_{c,\oB_\pm}(\E)$;
\item  $\supp(\G^\pm \f ) \subset J^\pm (\supp \f )$ for all 
$\f\in\Gamma_{c}(\E)\,,$
\end{enumerate}
where $J^\pm$ denote the causal future (+) and past ($-$) and 
$\Gamma_{\sharp,\oB_\pm}(\E)\subset\Gamma_{\sharp}(\E)$, $\sharp\in\{sc,c\}$ 
denotes the space of smooth  sections on $\E$ (with $\sharp$ support property) 
which fulfill the $\oB_\pm$-boundary condition.
\end{proposition}

\begin{proof}
Let $\f\in \Gamma_{c}(\E)$.
We choose $t_0\in \RR$ such that $\supp \f \subset J^+( 
\Sigma_{t_0})$.
By Theorem~\ref{thm:main2}, there exists a unique solution $\Psi=\Psi(\f)$ to 
the 
Cauchy problem 
$$
\begin{cases}
\oS \Psi =  \f\\
\Psi_{|_{\Sigma_{t_0}}} = 0 \\
\Psi_{|_{\bM}} \in\oB_+.
\end{cases}
$$
We set  
$\G^+\f:=\Psi$ and notice that $\oS\circ\G^+\f= \oS \Psi = \f$. Note that by 
the finite speed of propagation, (cf. Proposition~\ref{prop:finite}), 
$\G^+\f\in \Gamma_{sc,\oB_+}(\E)$. 
Moreover, $\G^+ \circ \oS \Psi= \G^+\f=\Psi$ 
which shows $(i)$.
By Proposition~\ref{prop:finite}, we obtain 
$\supp\G^+\f \subset J^+(\supp f)$ and this concludes the proof of $(ii)$ for 
$\G^+$.
The existence of the retarded Green operator $\G^-$ is proven analogously.
\end{proof}

\section{Examples of symmetric hyperbolic systems}\label{sec:Ex SHS}

\subsection{The Euler momentum equation}\label{sec:Euler}

We briefly discuss an elementary example of a symmetric hyperbolic system where 
the notion of future and past admissible boundary conditions is essential.
Most of what we present here is inspired from \cite[Example 
p.305]{Rauch-Massey}, \cite[Exercise 3.9.22]{Ba-lect} and can be found in 
\cite{dragoginouxmurro2022}.\\

We consider $\M:=\RR\times\Sigma$ with metric $g:=-dt^2\oplus h$, where 
$(\Sigma^n,h)$ is an arbitrary Riemannian manifold with nonempty boundary.
Let $\E:=\pi_2^*\T\Sigma\to\M$ be the tangent bundle of $\Sigma$ pulled back 
onto $\M$ via the second canonical projection $\pi_2\colon\M\to\Sigma$.
Given any section $u_0$ of $\E$, that is, any possibly time-dependent smooth 
vector field along $\Sigma$, we define the so-called linearized Euler operator 
\[\oS:=\partial_t+\nabla_{u_0}+\nabla_{\cdot}u_0\]
acting on sections of $\E$.
Because of $\sigma_{\oS}(\xi)=\xi(\partial_t+u_0)\cdot\mathrm{Id}$ for every 
$\xi\in \T^*\M$, the operator $\oS$ is symmetric and is hyperbolic if and only 
if $h(u_0,u_0)\leq1$ everywhere on $\M$.
Denoting by $\n$ the outward unit normal to $\bM$, we have 
$\sigma_{\oS}(\n^\flat)=h(\n,u_0)\cdot\mathrm{Id}=\langle\n,
u_0\rangle\cdot\mathrm { Id } $ , so that $\oS$ is of constant characteristic 
as 
soon as $\langle\n,u_0\rangle$ vanishes either identically or nowhere along 
$\bM$:
\begin{enumerate}
\item If $\langle\n,u_0\rangle>0$ along $\bM$, then $\oS$ is nowhere 
characteristic along $\bM$ and the only possible future and past admissible 
conditions for $\oS$ along $\bM$ are $\oB_+=\E_{|_{\bM}}$ and $\oB_-=\{0\}$.
\item If $\langle\n,u_0\rangle=0$ along $\bM$, then $\oS$ is of constant 
characteristic and the only possible future and 
past admissible 
conditions for $\oS$ along $\bM$ are $\oB_\pm=\E_{|_{\bM}}$.
\item If $\langle\n,u_0\rangle<0$ along $\bM$, then $\oS$ is nowhere 
characteristic along $\bM$ and the only possible future and past admissible 
conditions for $\oS$ along $\bM$ are $\oB_+=\{0\}$ and $\oB_-=\E_{|_{\bM}}$.
\end{enumerate}

\noindent From now on, we assume 
$\Sigma:=\RR_+^n=\{x=(x_1,\cdots,x_n)\in\RR^n\,|\,x_n\geq0\}$ with standard 
Euclidean metric and $u_0$ to be the restriction of any nonzero parallel vector 
field from $\RR^n$ to $\RR_+^n$.
Up to rescaling $u_0$, we may assume that $h(u_0,u_0)\leq1$ on $\RR_+^n$.
We fix $\Sigma_0:=\{0\}\times\Sigma$ as a Cauchy hypersurface in $\M$ and 
$v_0\in\Gamma(\E_{|_{\Sigma_0}})$ as initial data.
Consider the Cauchy problem for $\oS$:
\begin{equation}\label{eq:ibvpEuler}
\left\{\begin{array}{lll}\oS u&=0&\textrm{ on }\M\\ 
 u_{|_{\Sigma_0}}&=v_0&\textrm{ on }\Sigma_0\\ 
u_{|_{J^+(\Sigma_0)\cap\bM}}&\in\oB_+&\textrm{ along }J^+(\Sigma_0)\cap\bM\\ 
u_{|_{J^-(\Sigma_0)\cap\bM}}&\in\oB_-&\textrm{ along 
}J^-(\Sigma_0)\cap\bM\end{array}\right.
\end{equation}
The equation $\oS u=0$ on $\M$ with initial data $u_{|_{\Sigma_0}}=v_0$ 
along $\Sigma_0$ has a unique solution $u$ which can be explicitely written as
\[u(t,x)=v_0(x-tu_0)\]
for all $(t,x)\in\M$.
Clearly, if $\langle\n,u_0\rangle>0$ along $\bM$ (which is the case as soon as 
this i\-ne\-qua\-li\-ty is satisfied at one point of $\bM$), then no boundary 
condition 
can be imposed for $u$ along $J^+(\Sigma_0)\cap\bM$, whereas $u$ must vanish 
along $J^-(\Sigma_0)\cap\bM$, otherwise there would exist infinitely many 
solutions to \eqref{eq:ibvpEuler}.
This is precisely what the boundary conditions $\oB_\pm$ prescribe in that case.
Analogously, if $\langle\n,u_0\rangle<0$ along $\bM$, then no boundary 
condition can 
be imposed for $u$ along $J^-(\Sigma_0)\cap\bM$, whereas $u$ must vanish along 
$J^+(\Sigma_0)\cap\bM$, otherwise the same violation of uniqueness for 
solutions to symmetric hyperbolic systems occurs.
If $\langle\n,u_0\rangle=0$ along $\bM$, then no boundary condition at all, 
whether in the past or the future of $\Sigma_0$, can be imposed, which is 
consistent with the fact that the curves $t\mapsto x-tu_0$ in that case run 
either entirely along $\bM$ or in $\M\setminus\bM$.\\

In all three cases, the compatibility conditions \eqref{eq:comp cond data} and 
\eqref{eq:comp cond databis} for the solution $u$ only mean that $v_0$ vanishes 
along $\partial\Sigma$ as well as all its time derivatives.
\color{black}

\subsection{The classical Dirac operator}\label{sec:spin geom}

Let $(\M,g)$ be a globally hyperbolic manifold of dimension $n+1$ and assume to 
have a spin 
structure 
\ie  a twofold covering map from the $\Spin_0(1,n)$-principal bundle $\P_{\rm 
Spin_0}$  to the bundle of positively-oriented tangent frames $\P_{\rm SO^+}$ 
of 
$\M$ such that the following diagram is commutative:
\begin{flalign*}
\xymatrix{
\P_{\Spin_0} \times \Spin_0(1,n) \ar[d]_-{} \ar[rr]^-{} && \P_\Spin \ar[d]_-{} 
\ar[drr]^-{}   \\
\P_{\rm SO^+} \times \textnormal{SO}(1,n)   \ar[rr]^-{} &&  \P_{\rm SO^+}  
\ar[rr]^-{} &&  \M
}
\end{flalign*}
The existence of spin structures is related to the topology of $\M$.
A 
sufficient (but not necessary) condition for the existence of a spin structure 
is the parallelizability of the manifold.
Therefore, since any $3$-dimensional 
orientable manifold is parallelizable, it follows by Theorem~\ref{thm: Sanchez} 
that any 4-dimensional globally hyperbolic manifold admits a spin structure.
Given a fixed spin structure, one can use the spinor representation to 
construct 
the {spinor bundle}, \ie the complex vector bundle
$$\S\M:=\Spin_0(1,n)\times_\rho \CC^N$$
where $\rho: \Spin_0(1,n) \to \textnormal{Aut}(\CC^N)$ is the complex 
$\Spin_0(1,n)$ representation and $N:= 2^{\lfloor \frac{n+1}{2}\rfloor}$. 
The spinor bundle comes together with the following structures:
\begin{itemize}
\item[-] a natural $\Spin_0(1, n)$-invariant indefinite fiber metric
\begin{equation*}\label{eq: spin prod}
\fiber{\cdot}{\cdot}_p: \S_p\M \times \S_p\M \to \CC;
\end{equation*}
\item[-] a \textit{Clifford multiplication}, \ie  a fiber-preserving map 
$$\gamma\colon \T\M\to \text{End}(\S\M)$$ 
which satisfies
 for all $p \in \M$, $u, v \in \T_p\M$ and $\psi,\phi\in \S_p\M$
\begin{equation*}\label{eq:gamma symm}
 \gamma(u)\gamma(v) + \gamma(v)\gamma(u) = -2g(u, v)\Id_{\S_p\M}\,,\,  \fiber{\gamma(u)\psi}{\phi}_p=\fiber{\psi}{\gamma(u)\phi}_p\,.
\end{equation*}
\end{itemize}
\begin{definition} 
The \textit{(classical) Dirac operator} $\Dir$ is the operator defined as the 
composition of the metric connection $\nabla^\S$ on $\S\M$, obtained as a lift 
of the Levi-Civita connection on $\T\M$, and the Clifford multiplication:
$$\Dir=\gamma\circ\nabla^{\S\M} \colon \Gamma(\S\M) \to \Gamma(\S\M)\,.$$
\end{definition}

In local coordinates and with a trivialization of the spinor bundle $\S\M$, the 
Dirac operator reads as
\begin{align*}
\Dir \psi = \sum_{\mu=0}^{n}  \epsilon_\mu \gamma(e_\mu) \nabla^{\S\M}_{e_\mu} 
\psi\, 
\end{align*}
where  $\{e_\mu\}$ is a local Lorentzian-orthonormal frame of the tangent bundle $\T\M$ and 
$\epsilon_\mu=g(e_\mu,e_\mu)=\pm 1$.

\begin{proposition} The classical Dirac operator $\Dir$ on globally hyperbolic 
spin manifolds $\M$ with timelike boundary is a nowhere characteristic 
symmetric 
hyperbolic system.
\end{proposition}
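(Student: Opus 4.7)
The plan is to verify the three defining properties of a nowhere characteristic symmetric hyperbolic system for $\Dir$, handling the subtlety that the natural spinorial fiber metric $\fiber{\cdot}{\cdot}_p$ is indefinite. First I would compute the principal symbol: for any $\xi\in\T_p^*\M$, the standard computation $\Dir(f\psi)-f\Dir\psi=\gamma(\nabla f)\psi=\gamma(\xi^\sharp)\psi$ yields $\sigma_\Dir(\xi)=\gamma(\xi^\sharp)$. Property (S), i.e.\ symmetry of $\sigma_\Dir(\xi)$ with respect to $\fiber{\cdot}{\cdot}_p$, then follows directly from the stated symmetry property of Clifford multiplication $\fiber{\gamma(u)\psi}{\phi}_p=\fiber{\psi}{\gamma(u)\phi}_p$.

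Next I would verify the nowhere characteristic condition. Since $\bM$ is timelike, $\n$ is spacelike of unit length, hence $g(\n,\n)=1$. Using the Clifford relation $\gamma(\n)^2=-g(\n,\n)\,\Id=-\Id$, we see that $\sigma_\Dir(\n^\flat)=\gamma(\n)$ is pointwise invertible, so $\dim\ker\sigma_\Dir(\n^\flat)=0$ is constant and maximal. To address property (H), recall the classical fact from Lorentzian spin geometry: for any future-directed timelike vector $u\in\T_p\M$, the sesquilinear form $(\psi,\phi)\mapsto\fiber{\gamma(u)\psi}{\phi}_p$ is positive definite on $\S_p\M$ (this is in fact a standard way to produce a positive-definite Hermitian inner product on spinors, depending on a choice of observer). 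Consequently, writing a future-directed timelike covector as $\tau$ with $\tau^\sharp$ future-directed timelike, the form $\fiber{\sigma_\Dir(\tau)\cdot}{\cdot}_p=\fiber{\gamma(\tau^\sharp)\cdot}{\cdot}_p$ is positive definite.

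At this point $\Dir$ satisfies the symmetry and hyperbolicity conditions but with respect to an indefinite fiber metric. This is exactly the situation of Lemma \ref{lem:scalprod}: replacing $\fiber{\cdot}{\cdot}_p$ by the Hermitian positive-definite fiber metric $\bracket{\cdot}{\cdot}_\beta=\beta\fiber{\gamma(\nabla t)\cdot}{\cdot}_p$ and $\Dir$ by $\fS_\beta=\sigma_\Dir(dt)^{-1}\Dir$, the lemma produces a genuine symmetric hyperbolic system in the sense of Definition \ref{def:symm syst}, whose Cauchy problem is equivalent to that of $\Dir$. Since the passage $\Dir\mapsto\fS_\beta$ preserves the kernel of the normal symbol (multiplication by the invertible $\sigma_\Dir(dt)^{-1}$), $\fS_\beta$ remains nowhere characteristic.

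The only potentially delicate point is the positive-definiteness of $\fiber{\gamma(\tau^\sharp)\cdot}{\cdot}_p$ for $\tau^\sharp$ future-directed timelike: this is where the signature conventions and the precise normalization of the spinorial fiber metric enter. I would verify it directly at a point by picking a Lorentzian orthonormal frame $(e_0,\ldots,e_n)$ with $e_0=\tau^\sharp/|\tau^\sharp|$, diagonalizing $\gamma(e_0)$ in a standard spinor basis, and reading off the positivity from the Clifford relations $\gamma(e_0)^2=\Id$ together with the definition of $\fiber{\cdot}{\cdot}_p$ in that basis; this is the classical content underlying the definition of the spinor scalar product and is the only step requiring genuine spin-geometric input.
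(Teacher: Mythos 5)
Your proposal is correct. Note, however, that the paper itself gives no argument at all for this proposition: its ``proof'' consists of citing \cite[Proposition 2.15]{defarg} and \cite[Corollary 3.12]{DiracMIT}, which contain precisely the kind of direct verification you carry out. Your computation of the symbol $\sigma_\Dir(\xi)=\gamma(\xi^\sharp)$, the deduction of (S) from the stated self-adjointness of Clifford multiplication with respect to $\fiber{\cdot}{\cdot}$, the invertibility of $\gamma(\n)$ from $\gamma(\n)^2=-g(\n,\n)\Id=-\Id$ (using that $\bM$ timelike forces $\n$ spacelike), and the positivity of $\fiber{\gamma(\tau^\sharp)\cdot}{\cdot}$ for future-directed timelike $\tau$ are exactly the standard facts needed; your explicit appeal to Lemma \ref{lem:scalprod} to reconcile the indefinite spin product with Definition \ref{def:symm syst}, together with the observation that composing with the invertible $\sigma_\Dir(dt)^{-1}$ does not change the kernel of the normal symbol, matches the paper's own discussion preceding that lemma. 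The one point you rightly flag as convention-dependent --- the sign normalization making $\fiber{\gamma(\tau^\sharp)\cdot}{\cdot}$ positive rather than negative definite for future-directed $\tau$ --- is indeed the only genuinely spin-geometric input, and your plan to check it in an adapted orthonormal frame is the standard and sufficient way to settle it.
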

\begin{proof}
Our claim follows from~\cite[Proposition 2.15]{defarg} and {\cite[Corollary 
3.12]{DiracMIT}}.
\end{proof}

\medskip

{\bf\large Examples of admissible boundary conditions}\\

The aim of this section is to test whether particular known boundary conditions 
for the Dirac 
operator are admissible in the sense of Definition \ref{def:admissible bc}. 
In particular, we shall show that the Lorentzian counterpart of the standard 
Riemannian boundary conditions are admissible, see e.g. \cite[Section 
1.5]{NicolasDirac}.

\paragraph{Lorentzian chirality boundary conditions.}\label{par:CBC}

Given a so-called chirality operator $\mathcal{G}$ on 
$\mathbb{S}\M$, i.e. a parallel involutive antiunitary (with respect to 
$\fiber{\cdot}{\cdot}$) endomorphism-field of $\mathbb{S}\M$ that anti-commutes 
with Clifford multiplication by  vectors, one 
may 
define the so-called \emph{chirality} boundary space which is defined as 
the range of the map
\[\pi_{\rm 
CHI}:=\frac{1}{2}\left(\mathrm{Id}-\gamma(\n)\mathcal{G}\right)\]
where $\gamma(\n)$ denotes Clifford multiplication for the outward-pointing 
unit 
normal along $\bM$.\\
The map $\pi_{\rm CHI}$ is clearly a linear projection since it satisfies 
$\pi_{\rm CHI}^2=\pi_{\rm CHI}$.
Furthermore, the range of $\pi_{\rm CHI}$ is the pointwise eigenspace of 
$\gamma(\n)\mathcal{G}$ to the eigenvalue $-1$ and 
$\mathcal{G}$ exchanges that eigenspace with the eigenspace to the eigenvalue 
$1$ since $\{\mathcal{G},\gamma(\n)\mathcal{G}\}=0$.
Therefore, the range of $\pi_{\rm CHI}$ has dimension 
$2^{\left[\frac{n}{2}\right]-1}$, which is the number of nonnegative -- 
actually positive -- 
eigenvalues of the endomorphism $\sigma_{\Dir}(\n^\flat)$.
Since $\mathcal{G}$ is skew-Hermitian with respect to the indefinite spin 
product $\fiber{\cdot}{\cdot}$, 
the complex number $\fiber{\mathcal{G}\psi}{\psi}$ must be 
imaginary for any $\psi\in\mathbb{S}\M_{|_{\bM}}$, therefore we have, for 
any 
$\psi\in\mathbb{S}\M_{|_{\bM}}$,
\begin{eqnarray*}
\fiber{\sigma_{\Dir}(\n^\flat)\pi_{\rm 
CHI}\psi}{\pi_{\rm 
CHI}\psi} &=&\fiber{\gamma(\n)\pi_{\rm 
CHI}\psi}{\pi_{\rm 
CHI}\psi}\\
&=&\fiber{\mathcal{G}\pi_{\rm 
CHI}\psi}{\pi_{\rm 
CHI}\psi},
\end{eqnarray*}
whose right-hand side is simultaneously real and imaginary and hence must 
vanish.
This proves the chirality condition to be admissible. Analogous arguments show 
that the range of $\pi_{\rm 
CHI}:=\frac{1}{2}\left(\mathrm{
Id} + \gamma(\n)\mathcal{G}\right)$ is also an admissible boundary space.

\begin{example}
An important example of a chirality operator is given by 
\[\mathcal{G}:=i^{\left[\frac{n}{2}\right]}\gamma(e_0) 
\gamma(e_1)\ldots\gamma(e_n) \colon\mathbb{S}\M\longrightarrow\mathbb{S}\M,\]
where $(e_0,e_1,\ldots,e_n)$ is any pointwise Lorentzian orthonormal basis of 
$\T\M$. 
Up to an imaginary scalar factor, $\mathcal{G}$ is the action of the volume 
form of $(\M,g)$.
It is easy to see that $\mathcal{G}$ is involutive and parallel and that, if 
$n$ 
is odd (i.e, $\M$ has even dimension), then $\mathcal{G}$ is skew-Hermitian 
(hence antiunitary)  with respect to $\fiber{\cdot}{\cdot}$ and anti-commutes 
with 
the Clifford action of any tangent vector.
Therefore, if $\M$ has even dimension, then $\mathcal{G}$ is a chirality 
operator in the above sense.
\end{example}

\paragraph{Riemannian chirality boundary conditions.}
Let $\mathcal{G}$ be a chirality operator as before, but we now assume 
$\mathcal{G}$ to commute with $\gamma(\partial_t)$ and to be unitary (with 
respect to $\fiber{\cdot}{\cdot}$).
Consider the projector operator
\[\pi_{\rm 
CHI}:=\frac{1}{2}\left(\mathrm{Id}+\frac{i}{\beta}
\gamma(\n)\gamma(\partial_t)\mathcal
{ G }\right)\,.
\]
Since the Riemannian Clifford multiplication on the spacelike slice $\Sigma_t$ 
is related to the Lorentzian one by 
\begin{equation}\label{eq:RiemCliff}
\gamma_{\Sigma_t}(X)\simeq \frac{\i}{\beta}\gamma(X) \gamma(\partial_t)
\end{equation}
for all $X\in \T\Sigma_t$, we can interpret the range of $\pi_{\rm CHI}$ to be 
a Riemannian 
chirality boundary space.\\
Contrary to the (Lorentzian) chirality boundary condition, the map $\pi_{\rm 
CHI}$ is an orthogonal projection:
 it clearly satisfies $\pi_{\rm 
CHI}^2=\pi_{\rm CHI}$ and, for any $\psi,\varphi\in\mathbb{S}\M$,
\begin{eqnarray*}
\fiber{\pi_{\rm 
CHI}\psi}{\varphi} &=& \frac{1}{2}\fiber{ \psi+\frac{\i}{\beta}
\gamma(\n)\gamma(\partial_t)\mathcal{G}\psi}{ \varphi}\\
&=&\frac{1}{2}\left(\fiber{\psi}{\varphi}-\frac{
1}{\beta}\fiber{\mathcal{G}\psi}{\i
\gamma(\n)\gamma(\partial_t)\varphi } \right)\\
&=&\frac{1}{2}\fiber{\psi}{\varphi+ \frac{
\i}{\beta} 
\gamma(\n)\gamma(\partial_t)\mathcal{G}\varphi } \\
&=& \fiber{\psi}{\pi_{\rm 
CHI}\varphi} \,.
\end{eqnarray*}
Moreover, the range of $\pi_{\rm CHI}$ is the pointwise eigenspace of 
$\frac{\i}{\beta}\gamma(\n)\gamma(\partial_t)\mathcal{G}$ to the eigenvalue $1$ 
and 
$\mathcal{G}$ exchanges that eigenspace with the eigenspace to the eigenvalue 
$-1$ since 
$\{\mathcal{G},\frac{i}{\beta}\gamma(\n)\gamma(\partial_t)\mathcal{G}\}=0$.
Therefore, the range of $\pi_{\rm CHI}$ has dimension 
$2^{\left[\frac{n}{2}\right]-1}$, which is the number of nonnegative 
eigenvalues of the endomorphism $\sigma_{\Dir}(\n^\flat)$.
As another 
consequence of the above computation, we have, for any $\psi\in\mathbb{S}\M$,
\[\gamma(\n)\pi_{\rm 
CHI}\psi=-\frac{\i}{\beta}\gamma(\partial_t)\mathcal{G}\pi_{\rm 
CHI}\psi,\]
where $\gamma(\partial_t)\mathcal{G}$ is Hermitian with respect to  
$\fiber{\cdot}{\cdot}$ since $[\mathcal{G},\partial_t]=0$ by assumption.
Now, for any $\psi\in \gamma(\mathbb{S}\M)$, we obtain
\begin{eqnarray*}
\fiber{\sigma_{\Dir}
(\n^\flat)\pi_{\rm 
CHI}\psi}{\pi_{\rm 
CHI}\psi} &=&\fiber{\gamma(\n)\pi_{\rm 
CHI}\psi}{\pi_{\rm 
CHI}\psi}\\
&=&-\frac{i}{\beta}\fiber{\gamma(\partial_t)\mathcal{G}\pi_{\rm 
CHI}\psi}{\pi_{\rm 
CHI}\psi},
\end{eqnarray*}
and the right-hand side of the last identity is simultaneously real and 
imaginary, 
therefore vanishes.
This proves the Riemannian chirality boundary condition to be admissible.  
Analogous arguments show that the range of $\pi_{\rm 
CHI}:=\frac{1}{2}\left(\mathrm{Id}-\frac{i}{\beta}
\gamma(\n)\gamma(\partial_t)\mathcal
{ G }\right)$ is also an admissible boundary space.

\paragraph{Lorentzian MIT bag boundary conditions.}
Consider the  
so-called MIT bag boundary space, which is defined as the range of 
\[\pi_{\rm 
MIT}:=\frac{1}{2}\left(\mathrm{
Id} - \imath\gamma(\n)\right),\]
where $\gamma(\n)$ is again the Lorentzian Clifford multiplication for the 
outward-pointing unit normal vector along $\bM$.
It is clear 
it is a pointwise linear projection whose range is the pointwise eigenspace of 
$\imath\gamma(\n)$ to the eigenvalue $-1$ and that is exchanged with the other 
eigenspace (to the eigenvalue $1$) by the Clifford multiplication of any 
nonzero vector that is orthogonal to $\n$.
Therefore, the range of $\pi_{\rm MIT}$ has dimension 
$2^{\left[\frac{n}{2}\right]-1}$, which is the number of nonnegative 
eigenvalues of the endomorphism $\sigma_{\Dir}(\n^\flat)$.
Moreover, for any $\psi\in\mathbb{S}\M_{|_{\bM}}$,
\begin{eqnarray*}
\fiber{\sigma_{\Dir}(\n^\flat)\pi_{\rm 
MIT}\psi}{\pi_{\rm 
MIT}\psi}_\beta&=& \fiber{\gamma(\n)\pi_{\rm 
MIT}\psi}{\pi_{\rm 
MIT}\psi}\\
&=&\imath \fiber{\pi_{\rm 
MIT}\psi}{\pi_{\rm 
MIT}\psi},
\end{eqnarray*}
which is simultaneously real and imaginary, therefore vanishes. This proves the 
MIT bag boun\-da\-ry condition to be also admissible.\medskip

Analogous arguments show that the range of $\pi_{\rm 
MIT}:=\frac{1}{2}\left(\mathrm{
Id} + \imath\gamma(\n)\right)$ is also an admissible boundary space.

\paragraph{Riemannian MIT boundary condition.}
We shall now present the Riemannian counterpart of the MIT boundary condition, 
replacing the Clifford multiplication on $\M$ by that along each $\Sigma_t$. 
Motivated by~\eqref{eq:RiemCliff}, consider the operator
\[\pi_{\rm 
MIT}:=\frac{1}{2}\left(\mathrm{
Id}-\frac{1}{\beta}\gamma(\n)\gamma(\partial_t)\right).\]
As the (Lorentzian) MIT boundary condition, it is a projection whose range has 
dimension  $2^{\left[\frac{n}{2}\right]-1}$.
Moreover, since for any $\psi\in\mathbb{S}\M$ it holds
\[\gamma(\n)\pi_{\rm MIT}\psi=\frac{1}{\beta}\gamma(\partial_t)\pi_{\rm 
MIT}\psi,\]
this implies
\begin{eqnarray*}
\fiber{\sigma_{\Dir}(\n^\flat)\pi_{\rm 
MIT}\psi}{\pi_{\rm 
MIT}\psi} &=& \fiber{\gamma(\n)\pi_{\rm 
MIT}\psi}{\pi_{\rm 
MIT}\psi}\\
&=&\frac{1}{\beta}\fiber{\gamma(\partial_t)\pi_{\rm 
MIT}\psi}{\pi_{\rm 
MIT}\psi}\geq 0 \,.
\end{eqnarray*}
This proves the Riemannian  MIT bag boundary space to be also admissible 
 for the forward Cauchy problem.
\color{black} 
Notice that the range of $\pi_{\rm 
MIT}:=\frac{1}{2}\left(\mathrm{
Id}+\frac{1}{\beta}\gamma(\n)\gamma(\partial_t)\right)$ is admissible 
 for the backward Cauchy problem
\color{black}
 since 
we have
\begin{eqnarray*}
\fiber{\sigma_{\Dir}(\n^\flat)\pi_{\rm 
MIT}\psi}{\pi_{\rm 
MIT}\psi} &=& - \frac{1}{\beta}\fiber{\gamma(\partial_t)\pi_{\rm 
MIT}\psi}{\pi_{\rm 
MIT}\psi}\leq 0 \,.
\end{eqnarray*}

\subsection{The geometric wave operator}\label{sec:GWO}

Let $\V$ be a Hermitian vector bundle of finite rank and consider a normally 
hyperbolic operator  $\P\colon\Gamma(\V)\to\Gamma(\V)$ , \ie a $2^{nd}$-order 
linear 
differential operator with principal symbol $\sigma_\P$ defined by 
$$\sigma_\P(\xi)=-g(\xi,\xi)\cdot\mathrm{Id}_\V\,,$$
for every $\xi\in \T^*\M$.
Then $\P$ can be turned into a symmetric hyperbolic system of a first order, 
see e.g.~\cite[Remark 3.7.11]{Ba-lect}.
First, there exists a unique covariant derivative $\nabla$ on $\V$ such that 
$\P=\nabla^*\nabla+c$ for some zero-order term $c$, see \cite[Lemma 
1.5.5]{wave}.
By Theorem~\ref{thm: Sanchez}, the globally hyperbolic manifold $\M$ can be 
written as $(\mathbb{R}\times\Sigma,-\beta^2 dt^2 + h_t)$, where each 
$\{t\}\times\Sigma$ is a smooth spacelike Cauchy hypersurface of $\M$, the 
function $\beta$ is smooth and positive $\mathbb{R}\times\Sigma$ and 
$(h_t)_{t\in\mathbb{R}}$ is a smooth one-parameter-family of Riemannian metrics 
on $\Sigma$.
Then computations show that
$$\nabla^*\nabla=\frac{1}{{\beta^2}}\nabla_{\partial_t}^2+\frac{1}{2{\beta^2}}
\left(\mathrm{tr}_{h_t}(\partial_t 
h_t)-\frac{\partial_t{\beta^2}}{{\beta^2}}\right)\nabla_{\partial_t}
+(\nabla^\Sigma)^*\nabla^{\Sigma}-\frac{1}{2{\beta^2}}\nabla_{\mathrm{grad}_{h_t
}({\beta^2})}^\Sigma,$$
where $\nabla^\Sigma$ is the restricted covariant derivative on $\Sigma$, that 
is, $\nabla_X^\Sigma u:=\nabla_X u$ for all $X\in \T\Sigma$ and 
$u\in\Gamma(\V)$.
Therefore, $\P$ can be written under the form
$$\P=\frac{1}{{\beta^2}}\nabla_{\partial_t}^2+b_0\nabla_{\partial_t}
+(\nabla^\Sigma)^*\nabla^{\Sigma}+\nabla_b^\Sigma+c,$$
where $b_0:=\frac{1}{2{\beta^2}}\left(\mathrm{tr}_{h_t}(\partial_t 
h_t)-\frac{\partial_t{\beta^2}}{{\beta^2}}\right)\in 
C^\infty(\mathbb{R}\times\Sigma,\mathbb{R})$, 
$b:=-\frac{1}{2{\beta^2}}\mathrm{grad}_{h_t}({\beta^2}
)\in\Gamma(\pi_2^*\T\Sigma)$.
This allows us to rewrite the Cauchy problem for $\P$ with boundary condition 
$\Pi_{\oB}\colon\V\oplus(\T^*\Sigma\otimes\V)\oplus\V\to\oB$
\begin{equation}\label{eq:Cauchy P}
 \begin{cases}{}
{\P} u= f   \\
u_{|_{\Sigma_{t_0}}} = h   \\
\nabla_{\partial_t} u_{|_{\Sigma_{t_0}}} = h'   \\
(\nabla_{\partial t}^\V u,\nabla^\Sigma u,u)_{|_{\bM}}\in\oB
\end{cases} 
\end{equation} 
 as a Cauchy problem for $\oS\colon\Gamma(\E) \to \Gamma(\E)$ with boundary 
condition 
$\Pi_\oB\colon\E\to\oB$,  
\begin{equation}\label{eq:Cauchy oS} \begin{cases}{}
{\oS }\Psi:= (A_0 \nabla^\V_{\partial_t} + A_\Sigma \nabla^{\Sigma} + C) \Psi = 
 \f   \\
\Psi|_{\Sigma_{t_0}} = \h   \\
\Psi|_{\bM} \in \oB
\end{cases} 
\end{equation} 
 where $\E$ is the Hermitian vector bundle $\E:=\V\oplus(\T^*\Sigma\otimes 
\V)\oplus \V$, $B\in\Gamma(\text{End}(\E))$  and
$$
\Psi:=\begin{pmatrix}
\nabla^\V_{\partial_t}u\\\nabla^\Sigma u\\ u
\end{pmatrix}, \quad \f := \begin{pmatrix}
f\\ 0 \\ 0
\end{pmatrix}, \quad
A_0:=\begin{pmatrix}
\frac{1}{{\beta^2}}&0&0\\0&1&0\\0&0&1
\end{pmatrix},
$$
$$ A_\Sigma=\begin{pmatrix}
0&-\mathrm{tr}_{h_t}&0\\-1&0&0\\0&0&0
\end{pmatrix}\,\quad C:=\begin{pmatrix}
b_0&b\lrcorner&c\\0&\frac{1}{2}h_t^{-1}\partial_th_t\lrcorner&R_{\partial_t,
\cdot}\\-1&0&0
\end{pmatrix} \,.
$$
The Cauchy problem~\eqref{eq:Cauchy oS} should be read as follows: 
$\nabla_{\partial_t}\nabla^\Sigma u$ is defined
by 
$$\left(\nabla_{\partial_t}\nabla^\Sigma 
u\right)_X:=\nabla_{\partial_t}\nabla_X^\Sigma 
u-\nabla_{(\nabla_{\partial_t}X)^\Sigma}^\Sigma u$$
 for all $X\in\Gamma(\pi_2^*\T\Sigma)$.
The term $\nabla^\Sigma \Psi$ is a section of $(\T^*\Sigma\otimes 
\V)\oplus(\T^*\Sigma\otimes \T^*\Sigma\otimes \V)\oplus (\T^*\Sigma\otimes 
\V)\to\M$, the trace coefficient contracting $\T^*\Sigma\otimes\T^*\Sigma$ of 
course. 
The coefficient $\frac{1}{2}h_t^{-1}\partial_th_t\lrcorner$ is more or less the 
Weingarten map (or shape operator) put into the $\T\Sigma$ slot.
The curvature tensor $R$ is that of $\nabla$ and is by convention given for all 
$X,Y\in \T\M$ by $R_{X,Y}=[\nabla_X,\nabla_Y]-\nabla_{[X,Y]}$.
The only difference with B\"ar's expression for the first-order-operator, apart 
from swapping the first and the second components of $\Psi$, is the vanishing 
of 
the $(2,1)$-coefficient in the zero-order matrix (no coefficient 
$\pi^t(\cdot)$), which plays no role anyway for conditions (S) and (H) since 
those deal with the principal symbol.
\begin{remark}
Notice that, while any solution $u$ of the Cauchy
problem~\eqref{eq:Cauchy P} 
gives a solution $\Psi$ to the Cauchy problem~\eqref{eq:Cauchy oS}, the 
contrary 
does not hold. Indeed, the space of initial data for $\Psi$ is ``too large'' 
and 
some a suitable restriction has to be imposed. For further details we refer to 
\cite[Remark 3.7.11]{Ba-lect}. 
\end{remark}

We summarize the previous observation in the following proposition.

\begin{proposition} Any normally hyperbolic operator $\P$ on a globally 
hyperbolic 
manifold $\M$ with timelike boundary can be reduced to a 
symmetric hyperbolic system $\oS$ of constant characteristic given as 
in~\eqref{eq:Cauchy oS}.
\end{proposition}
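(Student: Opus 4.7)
My plan is essentially to execute and verify the construction that is already sketched in the paragraphs preceding the statement, and then check that the resulting first-order operator $\oS$ enjoys the three required properties: symmetry (S), hyperbolicity (H), and constant characteristic along $\bM$.

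First I would fix the splitting $\M=\RR\times\Sigma$ with $g=-\beta^2 dt^2+h_t$ provided by Theorem~\ref{thm: Sanchez} (with $\partial_t$ tangent to $\bM$, hence $\n\in T\Sigma_t$ for each $t$), and invoke \cite[Lemma 1.5.5]{wave} to produce the unique covariant derivative $\nabla$ on $\V$ with $\P=\nabla^*\nabla+c$. Expanding $\nabla^*\nabla$ in the splitting yields the expression displayed in the paper, from which the form
\[\P=\frac{1}{\beta^2}\nabla_{\partial_t}^2+b_0\nabla_{\partial_t}+(\nabla^\Sigma)^*\nabla^\Sigma+\nabla_b^\Sigma+c\]
is obtained. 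Setting $\Psi:=(\nabla^\V_{\partial_t}u,\nabla^\Sigma u,u)^\top$ and using the definition of $\nabla_{\partial_t}\nabla^\Sigma$ given above, a direct computation (together with the identity $[\nabla_{\partial_t},\nabla^\Sigma_X]u=\nabla^\Sigma_{(\nabla_{\partial_t}X)^\Sigma}u+R_{\partial_t,X}u$) shows that the second-order PDE $\P u=f$ is equivalent to the first-order system $\oS\Psi=\f$ with the matrices $A_0$, $A_\Sigma$, $C$ as in~\eqref{eq:Cauchy oS}.

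Second, I would equip the bundle $\E=\V\oplus(\T^*\Sigma\otimes\V)\oplus\V$ with a suitable positive-definite Hermitian fiber metric. The natural choice is
\[\bracket{(v_1,\omega,v_2)}{(v_1',\omega',v_2')}:=\frac{1}{\beta^2}\fiber{v_1}{v_1'}_\V+\fiber{\omega}{\omega'}_{h_t}+\fiber{v_2}{v_2'}_\V,\]
where $h_t$ is used to identify $\T^*\Sigma\otimes\V$ with $\T\Sigma\otimes\V$. With respect to this metric, condition (S) is a straightforward check: for $\xi=\xi_0 dt+\xi_\Sigma$ the principal symbol is
\[\sigma_\oS(\xi)=\xi_0A_0+A_\Sigma(\xi_\Sigma^\sharp),\]
and one verifies entry-by-entry that the off-diagonal blocks in $A_\Sigma(\xi_\Sigma^\sharp)$ (namely $-\mathrm{tr}_{h_t}(\xi_\Sigma\otimes\cdot)$ and $-\xi_\Sigma$) are mutually adjoint under the chosen metric because of the normalization by $\beta^{-2}$ in the first slot. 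Condition (H) reduces, for a future-directed timelike $\tau=\tau_0 dt+\tau_\Sigma$ (so $\tau_0^2/\beta^2>|\tau_\Sigma|_{h_t}^2$), to positive-definiteness of the block matrix
\[\begin{pmatrix}\tau_0/\beta^4 & -\mathrm{tr}_{h_t}(\tau_\Sigma\otimes\cdot)/\beta^2 & 0\\ -\tau_\Sigma^\sharp/\beta^2 & \tau_0\,\mathrm{Id} & 0\\ 0 & 0 & \tau_0\end{pmatrix};\]
the last diagonal entry is $\tau_0>0$, and positive-definiteness of the upper $2\times2$ block follows from a Schur-complement computation whose determinant is precisely $\tau_0(\tau_0^2/\beta^2-|\tau_\Sigma|_{h_t}^2)/\beta^2>0$.

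Finally, the constant-characteristic condition along $\bM$: since $\partial_t$ is tangent to $\bM$, the outward unit conormal $\n^\flat$ lies in $T^*\Sigma$, so $\sigma_\oS(\n^\flat)=A_\Sigma(\n^\sharp)$, whose kernel equals $\{0\}\oplus\ker(\mathrm{tr}_{h_t}(\n^\flat\otimes\cdot))\oplus\V$. Since $\n$ is a nowhere-vanishing unit vector field on $\bM$, the dimension of this kernel is the constant $N+(n-1)N$, proving constant (in fact maximal) characteristic rank along $\bM$. The main obstacle I anticipate is keeping the bookkeeping of the three slots consistent — in particular checking the symmetry of the off-diagonal blocks under the $\beta$-weighted metric — but once the fiber metric above is fixed, each verification becomes a finite-dimensional linear-algebra check performed pointwise.
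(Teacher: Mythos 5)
Your overall strategy coincides with the paper's (explicit first-order reduction, then direct verification of (S), (H) and of the constant-characteristic property), and your last step is essentially the paper's own argument: since $\partial_t$ is tangent to $\bM$ one has $\n^\flat\in\T^*\Sigma$, hence $\sigma_\oS(\n^\flat)(\Psi_1,\Psi_2,\Psi_3)=(-\n\lrcorner\Psi_2,\,-\n^\flat\otimes\Psi_1,\,0)$ and $\ker\sigma_\oS(\n^\flat)=\{0\}\oplus(\n^\flat)^\perp\otimes\V\oplus\V$ has constant rank. However, the verification of (S) and (H) contains a genuine error: the fiber metric you choose does not make $\sigma_\oS$ Hermitian. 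With the weight $\beta^{-2}$ on the first summand, the two off-diagonal blocks of $\sigma_\oS(\xi_\Sigma)$ are \emph{not} mutually adjoint; indeed for $\Psi=(\Psi_1,\Psi_2,\Psi_3)$ and $\Phi=(\Phi_1,\Phi_2,\Phi_3)$ one computes
\begin{align*}
\bracket{\sigma_\oS(\xi_\Sigma)\Psi}{\Phi}&=-\tfrac{1}{\beta^2}\fiber{\xi_\Sigma^\sharp\lrcorner\Psi_2}{\Phi_1}-\fiber{\Psi_1}{\xi_\Sigma^\sharp\lrcorner\Phi_2}\,,\\
\bracket{\Psi}{\sigma_\oS(\xi_\Sigma)\Phi}&=-\tfrac{1}{\beta^2}\fiber{\Psi_1}{\xi_\Sigma^\sharp\lrcorner\Phi_2}-\fiber{\xi_\Sigma^\sharp\lrcorner\Psi_2}{\Phi_1}\,,
\end{align*}
which agree only when $\beta\equiv1$. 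So the claim that the $\beta^{-2}$ normalization is what makes the blocks adjoint is exactly backwards: it is what destroys the symmetry. Consequently the block matrix you display for (H) is not the Gram matrix of a Hermitian form (its diagonal values are not even real for a complex bundle), and the Schur-complement argument does not apply. The repair is to take the \emph{unweighted} direct-sum metric on $\E=\V\oplus(\T^*\Sigma\otimes\V)\oplus\V$ induced by $h_t$ and the fiber metric of $\V$ --- this is the choice implicit in \cite[Remark 3.7.11]{Ba-lect}, which the paper cites for (S) and (H). Then $-\xi_\Sigma^\sharp\lrcorner$ and $-\xi_\Sigma\otimes$ are adjoints of one another, (S) holds, and (H) follows from $\fiber{\sigma_\oS(\tau)\Psi}{\Psi}=\tau_0\bigl(\beta^{-2}|\Psi_1|^2+|\Psi_2|^2+|\Psi_3|^2\bigr)-2\Re e\fiber{\tau_\Sigma^\sharp\lrcorner\Psi_2}{\Psi_1}$ via Cauchy--Schwarz, using that a future-directed timelike $\tau=\tau_0\,dt+\tau_\Sigma$ satisfies $\tau_0>\beta|\tau_\Sigma|_{h_t}$; the factor $\beta^{-2}$ belongs in $A_0$, not in the metric.

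A secondary point: your parenthetical ``(in fact maximal)'' in the last step should be deleted. The kernel of $\sigma_\oS(\n^\flat)$ has rank $nk$ inside a bundle of rank $(n+2)k$ (with $k=\mathrm{rk}(\V)$), so the boundary is characteristic of \emph{nonzero and nonmaximal} constant multiplicity --- in particular $\oS$ is not nowhere characteristic. This is not cosmetic: the paper's subsequent regularity discussion distinguishes precisely between the nowhere characteristic case (Rauch--Massey) and the nonzero constant characteristic case (Secchi), and the wave operator falls into the latter.
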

\begin{proof}
As in \cite[Remark 3.7.11]{Ba-lect}, Conditions (S) and (H) can be easily 
che\-cked. Moreover, by choosing a Cauchy temporal function with gradient tangent 
to $\bM$, it is easy to see that $\oS$ is of constant characteristic. Indeed, 
since 
\[\sigma_{\oS}(\n^b)=\left(\begin{array}{ccc}
0&-\n^b\lrcorner&0\\-\n^\flat\otimes&0&0\\0&0&0\end{array}\right),
\]
the pointwise kernel of $\sigma_{\oS}(\n^\flat)$ is given by 
\[\ker(\sigma_{\oS}(\n^\flat))=\{0\}\oplus(\n^\flat)^\perp\otimes\V\oplus\V,
\]
which clearly has constant rank.
\end{proof}

\begin{remark} Notice that $\sigma_{\oS}(\n^\flat)$ has pointwise three 
eigenvalues: $0$ of 
multiplicity $nk$, where $n+1=\dim(\M)$ and $k=\mathrm{rk}_{\mathbb{R}}(\V)$, 
$1$ and $-1$, both of the same multiplicity $k$.
Actually, for any 
$\varepsilon\in\{\pm1\}$ 
and for any 
$\Psi=(\Psi_1,\Psi_2,\Psi_3)\in\E$, 
we have 
\begin{eqnarray*}
\sigma_{\oS}(\n^\flat)\Psi=-\varepsilon 
\Psi&\Longleftrightarrow&(-\n^\flat\lrcorner 
\Psi_2,-\n^\flat\otimes \Psi_1,0)=-\varepsilon(\Psi_1,\Psi_2,\Psi_3)\\
&\Longleftrightarrow&\left\{\begin{array}{ll}\n^\flat\lrcorner 
\Psi_2&=\varepsilon 
\Psi_1\\\n^\flat\otimes \Psi_1&=\varepsilon \Psi_2\\ 
\Psi_3&=0\end{array}\right.\\
&\Longleftrightarrow&\left\{\begin{array}{ll}\n^\flat\otimes 
\Psi_1&=\varepsilon 
\Psi_2\\ \Psi_3&=0\end{array}\right.\\
&\Longleftrightarrow& \Psi=(\Psi_1,\varepsilon\n^\flat\otimes \Psi_1,0)\\
&\Longleftrightarrow& \Psi=\left(\left(\mathrm { Id }
\oplus\varepsilon\n^\flat\otimes\right)(\Psi_1),0\right)
\end{eqnarray*}
that is,
\[\ker(\sigma_{\oS}(\n^\flat)+\varepsilon)=\left(\mathrm { Id }
\oplus\varepsilon\n^\flat\otimes\right)(\V)\oplus\{0\}.\]
As a consequence, since $\mathrm { Id }
\oplus\varepsilon\n^\flat\otimes$ is injective, 
$\ker(\sigma_{\oS}(\n^\flat)+\varepsilon)$ has pointwise rank $k$.
In particular,
\[\sum_{\lambda\geq0}\dim(\ker(\sigma_{\oS}(\n^\flat)-\lambda))=(n+1)k.\]
\end{remark}

\begin{definition}\label{def:reduced admissible}
Let $\P$ be a normally hyperbolic operator. We say that ${\oB'}$ is an 
\emph{admissible boundary space} for $\P$ if there exists an admissible 
boundary 
space $\oB$ for $\oS$ such that the Cauchy problems are equivalent.
\end{definition}

Before showing some example of boundary conditions $\Pi_{\oB'}$ for $\P$ which 
reduce to admissible boundary condition $\Pi_\oB$ for $\oS$, let us state and 
prove the main result of this section:

\begin{theorem}
Let $\M$ be a globally hyperbolic manifold with timelike boundary and denote 
with $\oB'$ an admissible boundary space for a normally hyperbolic operator 
$\P\colon\Gamma(\V)\to\Gamma(\V)$.
Then the Cauchy problem for $\P$ is 
well-posed, 
namely for any data $(f,h,h')$ satisfying the compatibility 
condition for any $k\geq0$,  
there exists a unique smooth solution $u 
\in\Gamma(\V)$  to the mixed initial-boundary value problem~\eqref{eq:Cauchy P} 
 which depends continuously on the data $(f,h,h')$.
\end{theorem}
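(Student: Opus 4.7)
The strategy is to apply Theorem~\ref{thm:main2} to the first-order symmetric hyperbolic system $\oS$ associated with $\P$ as in~\eqref{eq:Cauchy oS}, and then to recover a scalar solution $u\in\Gamma(\V)$ of $\P u=f$ from the vector-valued solution $\Psi=(\Psi_1,\Psi_2,\Psi_3)\in\Gamma(\E)$ of $\oS\Psi=\f$. First I verify the hypotheses of Theorem~\ref{thm:main2}: $\oS$ has already been shown to be a symmetric hyperbolic system of constant characteristic, and $\sigma_{\oS}(dt)=A_0=\mathrm{diag}(1/\beta^2,\mathrm{Id},\mathrm{Id})$ is pointwise positive definite on $\E$, so $\fiber{\sigma_{\oS}(dt)\cdot}{\cdot}$ is positive definite as required. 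Since $\oB'$ is admissible for $\P$, Definition~\ref{def:reduced admissible} provides an admissible boundary space $\oB$ for $\oS$ whose Cauchy problem is equivalent to that for $\oB'$.

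Next I set up the first-order data. Given $(f,h,h')$ satisfying the compatibility condition for $\P$ at every order, I put
\[
\f:=\begin{pmatrix} f\\ 0\\ 0\end{pmatrix},\qquad \h:=\begin{pmatrix} h'\\ \nabla^\Sigma h\\ h\end{pmatrix}.
\]
The equivalence of the two Cauchy problems translates the compatibility condition for $(f,h,h')$ into the compatibility condition~\eqref{eq:comp cond data} for $(\f,\h)$ with respect to $\oB$ at every order. Theorem~\ref{thm:main2} then yields a unique smooth $\Psi\in\Gamma(\E)$ satisfying $\oS\Psi=\f$, $\Psi|_{\Sigma_{t_0}}=\h$ and $\Psi|_{\bM}\in\oB$, together with continuous dependence on $(\f,\h)$ and hence on $(f,h,h')$.

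It remains to recover $u$. Define $u:=\Psi_3$: the third row of $\oS\Psi=\f$ reads $\nabla_{\partial_t}u=\Psi_1$, while the initial conditions give $u|_{\Sigma_{t_0}}=h$ and $\nabla_{\partial_t}u|_{\Sigma_{t_0}}=h'$. The main obstacle, and the only non-trivial step, is to propagate the identity $\Psi_2=\nabla^\Sigma u$ from $\Sigma_{t_0}$ to all of $\M_\T$. To this end set $\Xi:=\Psi_2-\nabla^\Sigma u$ and substitute $\Psi_1=\nabla_{\partial_t}u$, $\Psi_3=u$ into the second row of $\oS\Psi=\f$; using the commutation relation $[\nabla_{\partial_t},\nabla^\Sigma]u=R_{\partial_t,\cdot}u+\frac{1}{2}h_t^{-1}\partial_t h_t\cdot\nabla^\Sigma u$, precisely the terms manufactured in the zero-order part $C$ of $\oS$ cancel, leaving the homogeneous linear ODE
\[
\nabla_{\partial_t}\Xi+\tfrac{1}{2}h_t^{-1}\partial_t h_t\cdot\Xi=0
\]
along each integral curve of $\partial_t$, with $\Xi|_{\Sigma_{t_0}}=\nabla^\Sigma h-\nabla^\Sigma h=0$. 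ODE uniqueness forces $\Xi\equiv 0$, so $\Psi_2=\nabla^\Sigma u$ on $\M_\T$. Inserting all three identifications into the first row of $\oS\Psi=\f$ reproduces $\P u=f$; the boundary condition $\Psi|_{\bM}\in\oB$ becomes $(\nabla_{\partial_t}u,\nabla^\Sigma u,u)|_{\bM}\in\oB'$ by construction of $\oB$. Smoothness, uniqueness and continuous dependence of $u$ on $(f,h,h')$ are inherited from the corresponding properties of $\Psi$.
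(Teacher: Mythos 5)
Your proposal is correct and takes essentially the same route as the paper: the paper's proof consists of reducing to the first-order system $\oS$ of~\eqref{eq:Cauchy oS}, translating the compatibility condition on $(f,h,h')$ into condition~\eqref{eq:comp cond data} for $(\f,\h)=((f,0,0),(h',\nabla^\Sigma h,h))$, and invoking Theorem~\ref{thm:main2}. The only difference is that you additionally write out the recovery of $u$ from $\Psi$ — in particular the propagation of the constraint $\Psi_2=\nabla^\Sigma u$ via the homogeneous ODE for $\Xi$ — a step the paper leaves implicit, delegating it to \cite[Remark 3.7.11]{Ba-lect}; your verification of it is sound and arguably makes the argument more self-contained.
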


Note that, when we require $(f,h,h')$ to satisfy the compatibility 
condition \eqref{eq:comp cond data} for any $k\geq0$, we mean that the 
corresponding data $(\f,\h)=((f,0,0),(h',\nabla^\Sigma h,h))$ for the 
first-order symmetric hyperbolic system $\oS$ satisfies \eqref{eq:comp cond 
data} for any $k\geq0$.
The proof is a straightforward consequence of Theorem \ref{thm:main2}.\\
\medskip

{\bf\large Examples of admissible boundary conditions}\\

The aim of this section is to test whether particular known boundary conditions 
for normally hyperbolic operators $\P$ are admissible in the sense of 
Definition 
\ref{def:reduced admissible}.

\paragraph{Neumann-like boundary conditions.}

We look at a particular boundary condition, namely the condition
\begin{equation}\label{eq:kindofNeumann}\nabla_\n^\Sigma u=0\end{equation}
along $\bM$.
We could call it the \emph{Neumann-like boundary condition}.
In that case, for the corresponding symmetric hyperbolic systems $\oS$ the 
boundary space $\oB$ coincides with the kernel of the pointwise 
projection 
\[G_{\oB}\colon\E_{|_{\bM}}\longrightarrow\E_{|_{\bM}},\qquad 
G_{\oB}:=\left(\begin{array}{ccc}0&\n\lrcorner&0\\0&0&0\\0&0&0\end{array}
\right).\]
That kernel can be written explicitly down
\[\ker(G_{\oB})=\V\oplus\;(\n^\flat)^\perp\otimes\V\;\oplus \V\]
and direct computations shows that $\dim(\ker(G_{\oB}))=(n+1)k$ pointwise.
Furthermore, for any $\Psi=(\Psi_1,\Psi_2,\Psi_3)\in\ker(G_{\oB})$,
\begin{eqnarray*}
\langle\sigma_{\oS}(\n^\flat)\Psi,\Psi\rangle&=&\langle(-\n\lrcorner 
\Psi_2,-\n^\flat\otimes 
\Psi_1,0),(\Psi_1,\Psi_2,\Psi_3)\rangle\\
&=&-2\Re e(\langle\n\lrcorner \Psi_2,\Psi_1\rangle)=0
\end{eqnarray*}
where we used $\n\lrcorner \Psi_2= \nabla_\n^\Sigma u=0$ since 
$\Psi\in\ker(G_{\oB})$.
This proves (\ref{eq:kindofNeumann}) to be admissible in the sense of 
Definition~\ref{def:reduced admissible}.

\paragraph{Transparent boundary conditions.}

The transparent boundary condition is defined as
\begin{equation}\label{eq:semitransparentbc}
\nabla_\n^\Sigma u=-b\nabla_{\partial_t}u
\end{equation}
along $\bM$ for some real parameter $b$, see e.g. \cite[Eq. 
(1)]{EngquistMajda77}.
In that case, the bundle $\oB$ coincides with the kernel of the pointwise 
projection 
\[G_{\oB}\colon\E_{|_{\bM}}\longrightarrow\E_{|_{\bM}},\qquad 
G_{\oB}:=\left(\begin{array}{ccc}b&\n\lrcorner&0\\0&0&0\\0&0&0\end{array}
\right),\]
that is, 
\begin{eqnarray*}
\oB&=&\ker(G_{\oB})\\
&=&\left\{\left(-\frac{1}{b}\n\lrcorner 
X_2,X_2,X_3\right)\,|\,(X_2,X_3)\in 
\T^*\Sigma\otimes\V\;\oplus\;\V\right\}\\
&=&\left(-\frac{1}{b}
\n\lrcorner\cdot\oplus\,\mathrm{Id}\right)(\T^*\Sigma\otimes V)\;\oplus\;\V.
\end{eqnarray*}

In particular, $\mathrm{rk}_{\mathbb{R}}(\oB)=(n+1)k$, as required.
Moreover, for any $X=(X_1,X_2,X_3)\in\oB$,
\[\langle\sigma_{\oS}(\n^b)X,X\rangle=-2\Re 
e(\langle\n\lrcorner X_2,X_1\rangle)=\frac{2}{b}|\n\lrcorner X_2|^2,\]
which is nonnegative as soon as $b\geq0$.
This shows (\ref{eq:semitransparentbc}) to be admissible 
for the forward Cauchy problem when $b\geq0$, 
 while if admissible for the backward Cauchy problem if $b\leq 0$.
\color{black}\\
\medskip

{\bf\large An example of a non-admissible boundary condition.}

\paragraph{Robin boundary condition for differential forms.}

In the particular situation where $\V=\Lambda^p\T^*\M$ is the bundle of 
differential 
forms on $\M$ for some $p\in\{0,1,\ldots,n+1\}$, there is  another 
boundary condition called the Robin boundary condition.
It is defined, for any $p$-form $\omega$ on $\M$ by 
\[\left\{\begin{array}{ll}\iota^*(\n\lrcorner 
 d\omega)&=\tau\iota^*\omega\\\iota^*(\n\lrcorner\omega)&=0\end{array}\right.,\]
where $\tau$ is a real parameter.
Here $d$ denotes the exterior differential as usual and 
$\iota\colon\bM\longrightarrow\M$ is the inclusion map.
The case where $\tau=0$ is considered (at least by some geometric analysts) as 
the ``standard'' generalization of the Neumann boundary condition for forms; it 
is usually called ``absolute boundary condition'' in the literature (there are 
also relative ones).
For Robin boundary conditions -- we let $\tau$ be any real parameter for the 
time being, so this includes the absolute boundary condition -- the bundle 
$\oB$ 
is the kernel of the pointwise projection
\[G_{\oB}:=\left(\begin{array}{ccc}
-dt\wedge(\n\lrcorner\cdot)&\iota^*(\n\lrcorner\cdot)-\sum_{j=2}
^ne_j^*\wedge\iota^*(\n\lrcorner 
e_j\lrcorner\cdot)&-\tau\iota^*\\0&0&0\\0&0&\iota^*(\n\lrcorner\cdot)
\end{array}\right),\]
where $(e_j)_{2\leq j\leq n}$ denotes any pointwise o.n.b. of 
$T(\bM\cap\Sigma)$.
Next we make $\oB$ a bit more precise.\\

It is already clear that 
\[\ker\left(\iota^*(\n\lrcorner\cdot)\right)=\left\{\omega\in\V\,|\,
\iota^*\omega=\omega\right\}=\Lambda^p\T^*\bM,\]
whose pointwise rank is $\left(\begin{array}{c}n\\ p\end{array}\right)$.
To see what condition the first line in the above matrix $G_{\oB}$ gives, we 
split any $\omega\in \V$ as follows:
\begin{eqnarray*}
\omega&=&\n^*\wedge\omega^{(1)}+\omega^T\\
&=&\n^*\wedge 
dt\wedge(\omega^{(1)})^{(t)}+n^*\wedge(\omega^{(1)})^{\bM\cap\Sigma}+\omega^T\\
&=&\n^*\wedge 
dt\wedge(\omega^{(1)})^{(t)}+n^*\wedge(\omega^{(1)})^{\bM\cap\Sigma}
+dt\wedge(\omega^T)^{(t)}+(\omega^T)^{\bM\cap\Sigma},
\end{eqnarray*}
where $(\omega^{(1)})^{(t)}\in\Lambda^{p-2}\T^*(\bM\cap\Sigma)$, 
$(\omega^{(1)})^{\bM\cap\Sigma},(\omega^T)^{(1)}\in\Lambda^{p-1}
\T^*(\bM\cap\Sigma)$ and $(\omega^T)^{\bM\cap\Sigma}\in\Lambda^p 
\T^*(\bM\cap\Sigma)$.
For any $X=(X_1,X_2,X_3)\in\oB$, we write 
$X_2=\sum_{j=1}^ne_j^*\otimes\omega_j$, where 
$(e_1=\nu,e_2,\ldots,e_n)$ is a pointwise o.n.b. of 
$\T\Sigma$ and $\omega_j\in\V$ for all $j\geq1$.
Then, setting 
$\Omega:=-dt\wedge(\n\lrcorner X_1)+\iota^*(X_2(\n))-\sum_{j=2}
^ne_j^*\wedge\iota^*(\n\lrcorner 
 X_2(e_j))-\tau\iota^*X_3$, we compute and use $\iota^* X_3=X_3$ as well as $\iota^*(\n\lrcorner\omega_j)=\n\lrcorner\omega_j$:
\begin{eqnarray*}
\Omega&=&-dt\wedge(\n\lrcorner X_1)+\iota^*\omega_1-\sum_{j=2}
^ne_j^*\wedge(\n\lrcorner 
 \omega_j)-\tau X_3\\
&=&-dt\wedge X_1^{(1)}+\omega_1^T-\sum_{j=2}^n e_j^*\wedge\omega_j^{(1)}-\tau 
X_3\\
&=&-dt\wedge\left(dt\wedge(X_1^{(1)})^{(t)}+(X_1^{(1)})^{\bM\cap\Sigma}
\right)+dt\wedge(\omega_1^T)^{(t)}+(\omega_1^T)^{\bM\cap\Sigma}\\
&&-\sum_{j=2}^ne_j^*\wedge\left(dt\wedge(\omega_j^{(1)})^{(t)}+(\omega_j^{(1)})^
{\bM\cap\Sigma}\right)-\tau dt\wedge(X_3)^{(t)}-\tau X_3^{\bM\cap\Sigma}\\
&=&-dt\wedge(X_1^{(1)})^{\bM\cap\Sigma}+dt\wedge(\omega_1^T)^{(t)}+(\omega_1^T)^
{\bM\cap\Sigma}\\
&&+dt\wedge\left(\sum_{j=2}^ne_j^*\wedge(\omega_j^{(1)})^{(t)}\right)-\sum_{j=2}
^n e_j^*\wedge(\omega_j^ { (1) } )^
{\bM\cap\Sigma}-\tau dt\wedge(X_3)^{(t)}\\
&&-\tau X_3^{\bM\cap\Sigma}\\
&=&dt\wedge\left((\omega_1^T)^{(t)}-(X_1^{(1)})^{\bM\cap\Sigma}+\sum_{j=2}
^ne_j^*\wedge(\omega_j^{(1)})^{(t)}-\tau(X_3)^{(t)}\right)\\
&&+(\omega_1^T)^
{\bM\cap\Sigma}-\sum_{j=2}
^n e_j^*\wedge(\omega_j^ { (1) } )^
{\bM\cap\Sigma}-\tau X_3^{\bM\cap\Sigma}.
\end{eqnarray*}
Therefore, $\Omega=0$ if and only if 
\begin{equation}\label{eq:condOmega=0}
\left\{\begin{array}{ll} 
(\omega_1^T)^{(t)}-(X_1^{(1)})^{\bM\cap\Sigma}+\sum_{j=2}
^ne_j^*\wedge(\omega_j^{(1)})^{(t)}-\tau(X_3)^{(t)}&=0\\
&\\
(\omega_1^T)^
{\bM\cap\Sigma}-\sum_{j=2}
^n e_j^*\wedge(\omega_j^ { (1) } )^
{\bM\cap\Sigma}-\tau X_3^{\bM\cap\Sigma}&=0
\end{array}
\right.
\end{equation}
This first shows that both components
$(\omega_1^T)^{(t)}$ and $(\omega_1^T)^
{\bM\cap\Sigma}$ (so actually $\omega_1^T$) depend linearly on other components 
of $X$, however all other components of $X$ can be chosen arbitrarily.
Thus, the space of all $X_1$-components has dimension 
$\left(\begin{array}{c}n+1\\ p\end{array}\right)$; the space of all 
$X_2$-components has dimension $\left(\begin{array}{c}n-1\\ 
p-2\end{array}\right)+\left(\begin{array}{c}n-1\\ 
p-1\end{array}\right)+(n-1)\left(\begin{array}{c}n+1\\ p\end{array}\right)$, 
the first two terms corresponding to the components 
$(\omega_1^{(1)})^{(t)}\in\Lambda^{p-2}\T^*(\bM\cap\Sigma)$ and 
$(\omega_1^{(1)})^{\bM\cap\Sigma}\in\Lambda^{p-1}\T^*(\bM\cap\Sigma)$ 
respectively (actually both just correspond to 
$\omega_1^{(1)}$ lying pointwise in $\Lambda^{p-1}\T^*\bM$) and the last one to the components 
$\omega_2,\ldots,\omega_n\in\Lambda^p\T^*\M$; and the space of all 
$X_3$-components has dimension $\left(\begin{array}{c}n\\ 
p\end{array}\right)$ since $X_3\in\Lambda^p\T^*\bM$.
All in all, $\oB$ has rank
\[\left(\begin{array}{c}n+1\\ p\end{array}\right)+\left(\begin{array}{c}n\\ 
p-1\end{array}\right)+(n-1)\left(\begin{array}{c}n+1\\ 
p\end{array}\right)+\left(\begin{array}{c}n\\ 
p\end{array}\right)=(n+1)\left(\begin{array}{c}n+1\\ p\end{array}\right),\]
which is exactly the rank of $\ker(\sigma_{\oS}(\n^b))$.\\
Now we look at the sign of the quadratic form 
$X\mapsto\langle\sigma_{\oS}(\n^b)X,X\rangle$ on $\oB$.
Given $X=(X_1,X_2,X_3)\in\oB$, we can express as above 
\begin{eqnarray*}
\langle\sigma_{\oS}(\n^b)X,X\rangle&=&-2\Re e(\langle\n\lrcorner 
X_2,X_1\rangle)\\
&=&-2\Re e(\langle\omega_1,X_1\rangle)\\
&=&-2\Re e(\langle\omega_1^{(1)},X_1^{(1)}\rangle)-2\Re 
e(\langle\omega_1^T,X_1^T\rangle)\\
&=&-2\Re e(\langle(\omega_1^{(1)})^{(t)},(X_1^{(1)})^{(t)}\rangle)\\
&&-2\Re 
e(\langle(\omega_1^{(1)})^{\bM\cap\Sigma},(X_1^{(1)})^{\bM\cap\Sigma}
\rangle)-2\Re 
e(\langle\omega_1^T,X_1^T\rangle).
\end{eqnarray*}
But, as we mentioned above, the components $(X_1^{(1)})^{(t)}$ of $X_1$ and 
$(\omega_1^{(1)})^{(t)}$ of $\omega_1$ (which is itself a component of $X_2$) 
can be 
chosen arbitrarily.
Furthermore, none of the components 
$(\omega_1^{(1)})^{\bM\cap\Sigma},(X_1^{(1)})^{\bM\cap\Sigma},\omega_1^T,X_1^T$ 
depend on $((X_1^{(1)})^{(t)},(\omega_1^{(1)})^{(t)})$.
Therefore whatever the 
value of the real number $\Re 
e(\langle(\omega_1^{(1)})^{\bM\cap\Sigma},(X_1^{(1)})^{\bM\cap\Sigma}
\rangle)+\Re 
e(\langle\omega_1^T,X_1^T\rangle)$ is, and provided $p\geq1$, we can always 
choose $(X_1^{(1)})^{(t)}$ 
and $(\omega_1^{(1)})^{(t)}$ such that 
\[\langle\sigma_{\oS}(\n^b)X,X\rangle<0.\]
In case $p=0$, $\omega=\omega^{\bM\cap\Sigma}$ (all other 
components vanish) and hence (\ref{eq:condOmega=0}) is equivalent to 
$\omega_1=\tau X_3$.
Therefore $\langle\sigma_{\oS}(\n^b)X,X\rangle=-2\Re 
e(\omega_1\overline{X_1})$ which vanishes if $\tau=0$ (because then 
$\omega_1=0$) and whose sign can be arbitrary if $\tau\neq0$, as we have 
already seen for condition (\ref{eq:kindofNeumann}) which coincides with the 
Robin boundary condition in that case.
This proves the Robin boundary condition to be non-admissible unless $p=0$ and 
$\tau=0$.\\

If $p\geq1$, there is actually an eigenvector of $\sigma_{\oS}(\n^b)$ 
associated to the 
eigenvalue $-1$ that lies in $\oB$: choose $X=(X_1,\n^b\otimes X_1,0)$ with 
$X_1=\n^*\wedge dt\wedge(X_1^{(1)})^{(t)}+(\n^*+dt)\wedge(X_1^T)^{(t)}$, then 
$X\in\oB\cap\ker(\sigma_{\oS}(\n)+1)$ and therefore 
\[\langle\sigma_{\oS}(\n^b)X,X\rangle=-|X|^2<0\]
as soon as $(X_1^{(1)})^{(t)}$ or $(X_1^T)^{(1)}$ is nonzero.
\section{Examples of symmetric positive systems}\label{sec:Ex SPS}

\subsection{Klein-Gordon operator}\label{sec:Klein-Gordon}

Let $\nabla$ be a covariant derivative on a Hermitian vector bundle $\V$ of 
finite rank $k$ over a globally hyperbolic manifold $\M$  with timelike 
boundary.
The Klein-Gordon operator reads as
$\P=\nabla^*\nabla +m^2$, where $m$ is the mass of the scalar field.
It is by defintion a 
normally hyperbolic operator and hence its Cauchy problem can 
be written as in ~\eqref{eq:Cauchy P}.
Unlike in Section~\ref{sec:GWO}, we 
can rewrite the Cauchy problem for $\P$ in terms of the Cauchy problem for 
the symmetric positive system $\oS\colon\Gamma(\E) \to \Gamma(\E)$, namely
\[ \begin{cases}{}
{\oS }\Psi=  \f   \\
\Psi_{|_{\Sigma_{t_0}}} = \h   \\
\G_\oB \Psi_{|_{\bM}} =0
\end{cases} 
\]
 where $\G_\oB$ is a boundary condition, $\E$ is the Hermitian vector bundle 
$\E:=\V\oplus\T^*\M\otimes \V$
and
$$\oS= \begin{pmatrix}
0 & -\tr \\
-1 & 0 
\end{pmatrix} \nabla + \begin{pmatrix}
 m^2 & 0 \\ 0 & 1 
\end{pmatrix} \qquad \text{with} \qquad \Psi=\begin{pmatrix}
u \\ \nabla u
\end{pmatrix} \,, \;\; F=\begin{pmatrix}
\f \\ 0
\end{pmatrix}\,.$$
As in Section~\ref{sec:GWO}, some restriction on the space of initial data for 
$\Psi$ has to be imposed in order to obtain a correspondence between the Cauchy 
problems.

It is not difficult to check that the principal symbol  $\sigma_\oS(\xi)$ is 
Hermitian for every $\xi\in \T^*\M$. 
Moreover, since the trace can be seen as a contraction of tensors, the 
principal 
symbol is parallel, \ie $\nabla\sigma_\oS=0$ and we get \[\Re 
e(\oS+\oS^\dagger)= 
\begin{pmatrix}
 2m^2 & 0 \\ 0 & 2
\end{pmatrix} \,.\]
 Hence, $\oS$ is a nowhere characteristic symmetric positive 
system.
 Indeed, 
since 
\[\sigma_{\oS}(\n^b)=\left(\begin{array}{cc}
0&-\n^b\lrcorner\\-\n^\flat\otimes&0\end{array}\right),
\]
the pointwise kernel of $\sigma_{\oS}(\n^\flat)$ is given by 

$$\ker \sigma_\oS(\n^\flat)=\{0\}\oplus \n^\perp \otimes V\,,$$
where $\n$ denotes again the normal vector to $\bM$. 
Notice that $\sigma_{\oS}(\n^\flat)$ has pointwise two further 
eigenvalues $1$ and $-1$ both with multiplicity $k$.

The net advantage of this reduction is that the Robin boundary conditions for 
$\P$ can be rewritten as an admissible boundary condition for $\oS$.
Note that, if $P=\Dir^2$ is the squared Dirac operator on $\M$ 
assumed to be spin, then the Schr\"odinger-Lichnerowicz formula states that 
$P=\nabla^*\nabla+\frac{\mathrm{Scal}}{4}$, where $\mathrm{Scal}$ is the scalar 
curvature of $(\M,g)$.
If $\mathrm{Scal}$ is bounded below by a positive constant on $\M$, then by 
analogous arguments as those described above $P$ can be turned into a 
first-order symmetric positive system and therefore the analysis we have 
developed for that category of operators can also be applied.
This is particularly interesting when looking at certain boundary 
conditions.\\
\medskip 

{\bf\large Examples of admissible boundary conditions}

\paragraph{Robin boundary condition.}
The Robin boundary conditions for the Klein-Gordon operator reads as
$$ a \nabla_\n u - b u =0$$
for some real constant parameters $a,b$.
In that case, the bundle $\oB$ coincides with the kernel of the pointwise 
projection 
\[ G_{\oB}:=\begin{pmatrix}
-b & a \n \lrcorner \\
0 & 0
\end{pmatrix}\]
and it has rank $k$ has required.
For any $\Psi=(\Psi_0,\Psi_1)\in\ker G_\oB$ it holds $a \n\lrcorner \Psi_1=b 
\Psi_0$. 
If $a,b\geq 0$ or $a,b\leq 0$, we get
$$\fiber{\sigma_\oS(\n^\flat)\Psi}{\Psi}= 2 \Re e \fiber{\n \lrcorner \Psi_1}{ 
\Psi_0}\geq 
0\,,$$
 showing that, if $ab\geq0$, then the Robin boundary conditions are admissible 
for the forward Cauchy problem\color{black}.
 Note that those Robin boundary conditions should not be confused with the ones 
arising in elliptic systems such as \cite[Theorem 6.31]{GilbargTrudinger77}, 
where $ab<0$ has to be assumed.

\subsection{Diffusion-reaction system}\label{sec:reactiondiffusion}
As for Section~\ref{sec:Klein-Gordon}, let $\nabla$  be a metric connection on 
an Hermitian vector bundle $\V$ of rank $k$.
Consider the diffusion-reaction operator 
$$\P:=\nabla_{\partial_t} - \tr(\nabla^\Sigma\nabla^\Sigma) + c $$
where $c$ is a zero order term, dubbed \emph{linear reaction term}.
The
notation here is the same as in Section~\ref{sec:GWO}. These systems are used 
to model a wide range of phenomena in physics, biology, social sciences, see 
e.g. (\cite{manu,manu1,manu2,manu3}) and the prototype example of a
diffusion-reaction system is the heat equation, where $c$ is set to zero.
Note that this is not the usual way to handle such evolution equations but we 
emphasize those equations fit into our framework.\\
Let us rewrite the Cauchy problem for the diffusion-reaction 
operator in terms of the Cauchy problem for the first order 
symmetric system defined by
\[\oS= \begin{pmatrix}
1 & 0 \\
0 & 0 
\end{pmatrix} \nabla_{\partial_t} + \begin{pmatrix}
0 & -\tr \\
- 1 & 0 
\end{pmatrix} \nabla^\Sigma + \begin{pmatrix}
 c & 0 \\ 0 & 1 
\end{pmatrix}\,.
\]
This equivalence can be obtained by setting 
$\Psi=\begin{pmatrix}
u \\ \nabla^\Sigma u
\end{pmatrix}$.
Differently from the case of the Klein-Gordon operator treated in 
Section~\ref{sec:Klein-Gordon}, $\oS$ is not a symmetric positive system if $c$ 
not positive definite. However, we can use a similar trick as in 
Lemma~\ref{lem:sps in M_T}, to obtain the Property (P) of 
Definition~\ref{def:symm syst}. To this end, let us assume $c$ to be uniformely 
bounded from below and chose a positive $\lambda$ such that $\lambda - c > 0$. 
Then the operator $\oK\colon \Gamma(\E_{|_{\M_\T}}) \to 
\Gamma(\E_{|_{\M_\T}})$ defined by
\[\oK:=  \oS + \lambda \begin{pmatrix}
1 & 0 \\
0 & 0 
\end{pmatrix}\,.
\]
$\oK$ is clearly a symmetric system. Futhermore, its Cauchy problem is 
equivalent to the one of $\oS$, namely
\begin{equation*}
\begin{cases}{}
\oK\tilde\Psi=\tilde \f  \\
\tilde\Psi|_{\Sigma_0} = \tilde\h \\
\tilde\Psi \in \oB
\end{cases} 
\qquad\Longleftrightarrow\qquad
\begin{cases}{}
\oS \Psi=\f   \\
\Psi|_{\Sigma_0} ={\h} \\
 \Psi \in \oB,
\end{cases}
\end{equation*}
where $\tilde\f=e^{-\lambda t}\f$, $\tilde\h=\h$ and $\tilde\Psi=e^{-\lambda 
t}\Psi$. Indeed, we have, for every 
$\phi\in\Gamma(E)$ and for every $t\in\RR$,
\begin{eqnarray*}
\oK(e^{-\lambda t}\phi)&=&\left(\oS+\lambda \begin{pmatrix}
1 & 0 \\
0 & 0 
\end{pmatrix}\right)(e^{-\lambda 
t}\phi)\\
&=&-\lambda e^{-\lambda t}\begin{pmatrix}
1 & 0 \\
0 & 0 
\end{pmatrix}\phi+e^{-\lambda 
t}\left(\oS+\lambda \begin{pmatrix}
1 & 0 \\
0 & 0 
\end{pmatrix}\right)\phi\\
&=&e^{-\lambda t}\oS\phi,
\end{eqnarray*}
Since $\lambda - c>0$ by assumption and the principal symbol is parallel, then 
a straighforward computations shows that $\oK$ is a positive symmetric system. 
Of course, a restriction on the class of initial data for $\oS$, and 
consequently for $\oK$ has to be imposed to get an equivalence between the 
Cauchy problem for $\oS$ and the one for $\P$.\\
\newpage

{\bf\large Examples of admissible boundary conditions}

\paragraph{Robin boundary condition.}

The Robin boundary conditions for the diffusion-reaction system reads as
$$ a \nabla_\n u - b u =0$$
for some real parameters $a,b$.
In that case, the bundle $\oB$ coincides with the kernel of the pointwise 
projection 
\[ G_{\oB}:=\begin{pmatrix}
-b & a \n \lrcorner \\
0 & 0
\end{pmatrix}\]
and it has rank $k$ has required.
For any $\Psi=(\Psi_0,\Psi_1)\in\ker G_\oB$ it holds $a \n\lrcorner \Psi_1=b
\Psi_0$. 
As in Section \ref{sec:Klein-Gordon}, if 
$ab\geq0$, we get
$$\fiber{\sigma_\oS(\n^\flat)\Psi}{\Psi}= 2 \Re e \fiber{\n \lrcorner \Psi_1}{ 
\Psi_0}\geq 
0\,,$$
 showing that the Robin boundary condition are admissible
 for the forward Cauchy problem
 under that 
assumption.

\vspace{0.5cm}

\end{document}